\documentclass[11pt]{article}
\usepackage{hyperref}
\usepackage{times}  
\usepackage{mathpazo}
\usepackage{amssymb,amsmath,amsthm}
\usepackage{epsfig}

\usepackage{multirow}
\usepackage{array}
\newcolumntype{?}{!{\vrule width 1pt}}
\usepackage{caption}
\usepackage{graphicx}
\usepackage[table]{xcolor}
\usepackage{blkarray}
\usepackage{booktabs}
\usepackage{mathtools}
\usepackage{color}
\definecolor{mygrey}{gray}{0.50}

 \setlength{\topmargin}{-0.5in}
 \setlength{\textwidth}{6.5in} 
 \setlength{\textheight}{8.9in}
 \setlength{\evensidemargin}{-.1in}
 \setlength{\oddsidemargin}{-.1in}

\newtheorem{theorem}{Theorem}[section]

\newtheorem{definition}[theorem]{Definition}

\newtheorem{lemma}[theorem]{Lemma}

\newtheorem{corollary}[theorem]{Corollary}

\newtheorem{remark}[theorem]{Remark}

\newcommand{\qedsymb}{\hfill{\rule{2mm}{2mm}}}
\renewenvironment{proof}[1][]{\begin{trivlist}
\item[\hspace{\labelsep}{\bf\noindent Proof#1:\/}] }{\qedsymb\end{trivlist}}

\def\calM{{\cal M}}

\def\Z{{\mathbb{Z}}}

\def\mod{\mbox{mod}}

\providecommand{\keywords}[1]{\textbf{\textit{Keywords:~}} #1}
\providecommand{\subjclass}[1]{\textbf{\textit{Classification Codes:~}} #1}


\renewcommand{\epsilon}{\varepsilon}

\newcommand{\rank}{\mathop{\mathrm{rank}}}

\newcommand{\Rbin}{{\rank}_{\mathrm{bin}}}
\newcommand{\Rreal}{{\rank}_\mathbb{R}}



\begin{document}

\title{{\bf The Binary Rank of Circulant Block Matrices}}

\author{
Ishay Haviv\thanks{School of Computer Science, The Academic College of Tel Aviv-Yaffo, Tel Aviv 61083, Israel. Research supported in part by the Israel Science Foundation (grant No.~1218/20).}
\and
Michal Parnas\thanks{School of Computer Science, The Academic College of Tel Aviv-Yaffo, Tel Aviv 61083, Israel. Email address: {\tt michalp@mta.ac.il}}
}

\date{}

\maketitle

\begin{abstract}
The binary rank of a $0,1$ matrix is the smallest size of a partition of its ones into monochromatic combinatorial rectangles.
A matrix $M$ is called $(k_1, \ldots, k_m ; n_1, \ldots, n_m)$ circulant block diagonal if it is a block matrix with $m$ diagonal blocks, such that for each $i \in [m]$, the $i$th diagonal block of $M$ is the circulant matrix whose first row has $k_i$ ones followed by $n_i-k_i$ zeros, and all of whose other entries are zeros.
In this work, we study the binary rank of these matrices as well as the binary rank of their complement, obtained by replacing the zeros by ones and the ones by zeros.
In particular, we compare the binary rank of these matrices to their rank over the reals, which forms a lower bound on the former.

We present a general method for proving upper bounds on the binary rank of block matrices that have diagonal blocks of some specified structure and ones elsewhere.
Using this method, we prove that the binary rank of the complement of a $(k_1, \ldots, k_m ; n_1, \ldots, n_m)$ circulant block diagonal matrix for integers satisfying $n_i > k_i >0$ for each $i \in [m]$ exceeds its real rank by no more than the maximum of $\gcd(n_i,k_i)-1$ over all $i \in [m]$.
We further present several sufficient conditions for the binary rank of these matrices to strictly exceed their real rank.
By combining the upper and lower bounds, we determine the exact binary rank of various families of matrices and, in addition, significantly generalize a result of Gregory (J.~Comb.~ Math.~\&~Comb.~Comp.,~1989).

Motivated by a question of Pullman (Linear Algebra Appl.~,1988), we study the binary rank of $k$-regular $0,1$ matrices, those having precisely $k$ ones in every row and column, and the binary rank of their complement.
As an application of our results on circulant block diagonal matrices, we show that for every $k \geq 2$, there exist $k$-regular $0,1$ matrices whose binary rank is strictly larger than that of their complement.
Furthermore, we exactly determine for every integer $r$, the smallest possible binary rank of the complement of a $2$-regular $0,1$ matrix with binary rank $r$.
\end{abstract}

\newpage

\section{Introduction}

The {\em binary rank} of a $0,1$ matrix $M$ of dimensions $n \times n$, denoted by $\Rbin(M)$, is the minimal integer $r$ for which there exist $0,1$ matrices $M_1$ and $M_2$ of dimensions $n \times r$ and $r \times n$ respectively, such that $M = M_1 \cdot M_2$ with the addition and multiplication operations over the reals.
Equivalently, it is the smallest size of a partition of the ones in $M$ into monochromatic combinatorial rectangles.
From the point of view of graph theory, $\Rbin(M)$ measures the smallest number of bicliques that form a partition of the edges of the bipartite graph associated with $M$ (with $n$ vertices on each part and an edge connecting the $i$th vertex of the first part with the $j$th vertex of the other part whenever $M_{i,j}=1$).

The binary rank has been intensively studied in combinatorics under various equivalent formulations (see, e.g.,~\cite{GregoryPullman,HefnerHLM90,MonsonPR95survey,Schwartz20}), as well as in the area of communication complexity, where it is closely related to the unambiguous non-deterministic communication complexity of functions (see~\cite[Chapter~2]{KN97}).
It can be observed that every $0,1$ matrix $M$ satisfies $\Rbin(M) \geq \Rreal(M)$, where $\Rreal(M)$ stands for the standard rank of the matrix $M$ over the reals.
In contrast to the latter, determining the binary rank of matrices turns out to be a difficult challenge in general.
From a computational perspective, the decision problem associated with the binary rank is known to be NP-hard~\cite{JiangR93}.

A $0,1$ matrix $M$ is said to be {\em $k$-regular} if every row and every column of $M$ has precisely $k$ ones.
Let $\overline{M}$ denote the complement matrix obtained from $M$ by replacing the ones by zeros and the zeros by ones.
In 1986, Brualdi, Manber, and Ross~\cite{BrualdiMR86} proved that for every $k$-regular $0,1$ matrix $M$ of dimensions $n \times n$ with $n > k > 0$, the rank of $M$ over the reals is equal to that of $\overline{M}$, that is, $\Rreal(M) = \Rreal(\overline{M})$.
Following their work, Pullman~\cite{Pullman88} asked in 1988 whether every such matrix $M$ satisfies $\Rbin(M) = \Rbin(\overline{M})$, as holds in the simple case of $k=1$. In 1990, Hefner, Henson, Lundgren, and Maybee~\cite{HefnerHLM90} conjectured that the answer to this question is negative (see~\cite[Conjecture~3.2]{HefnerHLM90}; see also~\cite[Open problem~7.1]{MonsonPR95survey}).
Building on recent advances in communication complexity~\cite{BBGJK21}, the conjecture of~\cite{HefnerHLM90} was settled in~\cite{HavivP22} in a strong form. Namely, it was shown there that for infinitely many integers $r$, there exists a regular matrix $M$ satisfying $\Rbin(M)=r$ and yet $\Rbin(\overline{M}) \leq 2^{\widetilde{O}(\sqrt{\log r})}$. This separation is known to be optimal up to $\log \log r$ multiplicative factors in the exponent.

A prominent class of regular matrices is that of circulant matrices, i.e., square matrices in which every row is a cyclic shift of the row that precedes it by one element to the right. For integers $n \geq k \geq 0$, let $D_{n,k}$ denote the $n \times n$ circulant matrix whose first row starts with $n-k$ ones followed by $k$ zeros.
The real rank of the matrix $D_{n,k}$ is known to satisfy
\begin{eqnarray*}\label{eq:rank_D_n_k}
\Rreal(D_{n,k}) = n-\gcd(n,k)+1
\end{eqnarray*}
for $n > k  \geq 0$ (see Lemma~\ref{lemma:realrank_D}).
Since the real rank forms a lower bound on the binary rank, it follows that $\Rbin(D_{n,k}) = n$ whenever $n$ and $k$ are relatively prime.
For $k=2$, Gregory proved in~\cite{Gregory89}, relying on an algebraic technique of Graham and Pollak~\cite{GrahamP71}, that $\Rbin(D_{n,2})=n$ for all $n \geq 3$.
It is not difficult to see that the binary rank of the complement of $D_{n,2}$, which is equal up to a permutation of columns to the matrix $D_{n,n-2}$, is also $n$ (see Lemma~\ref{lemma:isolation}).
In light of Pullman's question~\cite{Pullman88}, this means that the $2$-regular matrix $D_{n,n-2}$ does not separate the binary rank from the binary rank of the complement.

\subsection{Our Contribution}

In the current work, we aim to study the binary rank of a family of circulant block diagonal matrices and the binary rank of their complement.
This family is defined as follows.

\begin{definition}\label{def:circ_block}
Let $k_1, \ldots, k_m$ and $n_1, \ldots, n_m$ be integers satisfying $n_i \geq k_i \geq 0$ for all $i \in [m]$.
A matrix $M$ is called {\em $(k_1, \ldots, k_m ; n_1, \ldots, n_m)$ circulant block diagonal} if it is a block matrix with $m$ diagonal blocks, such that for each $i \in [m]$, the $i$th diagonal block of $M$ is $D_{n_i,n_i-k_i}$, and all of whose other entries are zeros.
If $k = k_1 = \cdots = k_m$, the matrix $M$ is said to be {\em $(k ; n_1, \ldots, n_m)$ circulant block diagonal}.
\end{definition}
\noindent
Note that every $(k ; n_1, \ldots, n_m)$ circulant block diagonal matrix is $k$-regular.
For the case $k=2$, an old result of Breuer~\cite{Breuer69} asserts that every $2$-regular $0,1$ matrix is equal, up to permutations of rows and columns, to some $(2; n_1, \ldots, n_m)$ circulant block diagonal matrix (see Lemma~\ref{lemma:2-reg-structure}).

Our first contribution is a general method for proving upper bounds on the binary rank of block matrices whose diagonal blocks are of some specified structure and all of whose other entries are ones (see Section~\ref{sec:general_method}). The method is applied to prove the following upper bound on the binary rank of the complement of $(k_1, \ldots, k_m ; n_1, \ldots, n_m)$ circulant block diagonal matrices.
\begin{theorem}\label{thm:upper_new_ki}
Let $M$ be the complement of a $(k_1, \ldots, k_m; n_1, \ldots, n_m)$ circulant block diagonal matrix for integers satisfying $n_i \geq k_i > 0$ for all $i \in [m]$.
For every $i \in [m]$, put $d_i = \gcd(n_i,k_i)$ if $n_i >k_i$, and $d_i = 1$ if $n_i=k_i$.
Then,
\[\Rbin(M) \leq \Rreal(M)+\max_{i \in [m]} d_i-1.\]
\end{theorem}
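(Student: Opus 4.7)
\medskip
\noindent\textbf{Plan of proof.} I would construct an explicit biclique partition of $M$ of size at most $\Rreal(M)+d-1$, where $d := \max_i d_i$, by exploiting the off-diagonal all-ones region to merge bicliques across diagonal blocks.

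I would start from the following column-extended baseline partition. For each block $i$ and each column $w \in W_i$, let $V_i^w \subseteq V_i$ denote the rows of $B_i = D_{n_i,k_i}$ that carry a $1$ in column $w$, which forms a cyclic interval of $n_i - k_i$ rows in $V_i$ (and is empty if $n_i = k_i$). Form the biclique $(V_i^w \cup ([N] \setminus V_i)) \times \{w\}$: its restriction to block $i$ is the $w$-th column of $B_i$, and its restriction to rows outside $V_i$ is all ones, so it is a valid biclique of $M$. As $(i,w)$ ranges over all columns of $M$, these $\sum_i n_i$ bicliques partition the ones of $M$, since an off-diagonal one $(v,w)$ with $v \in V_j$, $w \in W_i$, $j \neq i$ lies in the biclique for $(i,w)$ because $v \in [N] \setminus V_i$. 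This baseline has too many bicliques in general.

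To reduce to the target count, I would exploit the cyclic $d_i$-structure of $B_i$: the columns $W_i$ split into $d_i$ orbits of length $m_i = n_i/d_i$ under the shift $w \mapsto w+d_i$, and the row-supports $V_i^w$ shift cyclically within each orbit. The key merge operation is a cross-block biclique of the form $(V_i^{w_i} \cup V_j^{w_j}) \times \{w_i, w_j\}$ for $w_i \in W_i$, $w_j \in W_j$, $i \neq j$, which is valid because the four induced sub-blocks are either columns of $B_i$ or $B_j$ or sit in the off-diagonal all-ones region. By chaining such merges along orbits and synchronizing across blocks, with leftover entries absorbed into neighbouring bicliques, the count is driven from the baseline down toward $\Rreal(M)+d-1$.

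The hard part will be orchestrating the merges so that the partition property is preserved throughout and the final count matches exactly. The reason only $\max_i d_i - 1$ (rather than the naive $\sum_i (d_i-1)$) excess over $\Rreal(M)$ appears is that the orbit-based merges across blocks can be synchronized only up to the length of the largest cyclic subgroup; any residual excess beyond that bottleneck must be accounted for by individual bicliques in the block(s) attaining $d_i = d$, contributing exactly $d-1$ to the final count. Verifying this bookkeeping, and simultaneously matching it against the separate computation of $\Rreal(M)$ in terms of $n_i$ and $d_i$ (which should come from the block-columnspace decomposition of $M$), is the most delicate step.
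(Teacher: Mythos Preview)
Your high-level strategy---partition each diagonal block, then merge rectangles across blocks using the off-diagonal all-ones region---matches the paper's. But the specific construction you sketch has a gap that the paper's version avoids.

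Your baseline uses one biclique per column: $(V_i^w \cup ([N]\setminus V_i)) \times \{w\}$. When you merge two such bicliques into $(V_i^{w_i}\cup V_j^{w_j}) \times \{w_i,w_j\}$, you lose coverage of the off-diagonal ones in rows $V_j \setminus V_j^{w_j}$ at column $w_i$ (and symmetrically). You propose to ``absorb'' these into neighbouring bicliques, but with single-column bicliques this is impossible: no other biclique in your partition contains column $w_i$, so there is nowhere to reassign a leftover entry in that column without creating a new rectangle and losing the saving. Chaining more merges or passing to orbits does not help, for the same reason---each column still sits in exactly one rectangle. The partition property breaks down precisely at the step you flag as ``most delicate,'' and your heuristic for why the excess is $\max_i d_i-1$ rather than $\sum_i(d_i-1)$ does not survive this.

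The paper's fix is to start from a different partition of each diagonal block $D_{n_i,k_i}$: the column sets are the $n_i$ cyclic intervals $B_l=\{l,\ldots,l+d_i-1\}$ of length $d_i$, not singletons. These column sets overlap (though the rectangles $A_l\times B_l$ partition the ones), so every column lies in $d_i$ of them; after a cross-block merge, leftover rows at a given column can be pushed into one of the \emph{other} rectangles containing that column. Concretely, the paper isolates $d_i-1$ ``special'' rectangles per block whose row sets $A_1,\ldots,A_{d_i-1}$ are pairwise disjoint, and verifies that the remaining $n_i-d_i+1$ non-special rectangles have column sets that tile both $[n_i]$ and each $[n_i]\setminus B_s$. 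Merging the $s$-th special rectangles across all blocks into a single rectangle $P_s$ for each $s\le\max_i(d_i-1)$ then yields $\sum_i(n_i-d_i+1)+\max_i(d_i-1)$ rectangles in total, and the tiling property lets the non-special rectangles absorb every leftover. This overlap-and-tiling structure is the mechanism your sketch is missing; once you replace single columns by length-$d_i$ intervals and identify the special rectangles, the bookkeeping becomes routine.
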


As a consequence, we produce for every $k \geq 2$, a family of $k$-regular matrices whose binary rank is strictly larger than that of their complement.
\begin{theorem}\label{thm:gap_kIntro}
For all integers $k \geq 2$ and $r \geq 2k$, there exists a $k$-regular $0,1$ matrix $M$ satisfying
\[\Rbin(M) = r~~~~\mbox{and}~~~\Rbin(\overline{M}) \leq \Big \lceil \frac{k+1}{2k} \cdot r\Big \rceil + (2k-3).\]
In particular, for every integer $r \geq 4$, there exists a $2$-regular $0,1$ matrix $M$ satisfying
\[\Rbin(M) = r~~~~\mbox{and}~~~\Rbin(\overline{M}) \leq \Big \lceil \frac{3r}{4}\Big \rceil + 1.\]
\end{theorem}
\noindent
Theorem~\ref{thm:gap_kIntro} shows that the answer to the aforementioned question of Pullman~\cite{Pullman88} remains negative even when it is restricted to $2$-regular matrices. Note that this is in a strong contrast to the matrices provided in~\cite{HavivP22} whose regularity grows with their dimensions.
We further mention that the construction given in the proof of Theorem~\ref{thm:gap_kIntro} is elementary and is from the family of circulant block diagonal matrices, whereas the construction of~\cite{HavivP22} relies on powerful tools from communication complexity (see~\cite{BBGJK21}).

By combining Theorem~\ref{thm:upper_new_ki} with a result of~\cite{Breuer69}, we determine the binary rank of the complement of any $2$-regular matrix up to an additive $1$, as stated below.
\begin{theorem}\label{thm:2-regIntro}
Let $M$ be a $2$-regular $0,1$ matrix of dimensions $n \times n$.
Then, $M$ is equal, up to permutations of rows and columns, to a $(2;n_1, \ldots,n_m)$ circulant block diagonal matrix for some integers $m \geq 1$ and $n_1,\ldots,n_m \geq 2$. Let $m_2$ denote the number of indices $i \in [m]$ with $n_i=2$, and let $m_{even}$ denote the number of indices $i \in [m]$ for which $n_i$ is even.
Unless $M$ is the all-one $2 \times 2$ matrix, it holds that
\[\Rbin(M) = n-m_2~~~~\mbox{and}~~~~\Rbin(\overline{M}) \in \{n-m_{even}, n-m_{even}+1\}.\]
\end{theorem}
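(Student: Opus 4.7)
The plan proceeds in two stages: first establish the structural claim and compute $\Rbin(M)$, then handle $\Rbin(\overline{M})$ via Theorem~\ref{thm:upper_new_ki} together with a real-rank computation.

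For the structural claim, I would view $M$ as the biadjacency matrix of a $2$-regular simple bipartite graph. Any such graph decomposes uniquely (up to ordering) into a disjoint union of even cycles of length at least $4$, and permuting rows and columns to list the vertices cycle by cycle presents $M$ as a block diagonal matrix in which the block coming from a cycle of length $2n_i$ is exactly $D_{n_i, n_i-2}$, with $n_i \geq 2$. To evaluate $\Rbin(M)$, I would use that the off-diagonal blocks of $M$ are zero: any monochromatic rectangle of ones cannot contain a row from one diagonal block and a column from another, since the crossing entry would be zero. Hence $\Rbin(M) = \sum_i \Rbin(D_{n_i,n_i-2})$. The introduction already records that $\Rbin(D_{n_i,n_i-2}) = n_i$ for every $n_i \geq 3$, via Gregory~\cite{Gregory89} and the column permutation identifying $\overline{D_{n,2}}$ with $D_{n,n-2}$, whereas for $n_i = 2$ the block is the $2 \times 2$ all-ones matrix of binary rank $1$. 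Summing yields $(n - 2m_2) + m_2 = n - m_2$.

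For $\overline{M}$ I would apply Theorem~\ref{thm:upper_new_ki} with all $k_i = 2$. The associated parameter $d_i$ equals $\gcd(n_i, 2) \in \{1, 2\}$ when $n_i > 2$ and is set to $1$ when $n_i = 2$, so $\max_i d_i - 1 \leq 1$ and the theorem gives $\Rbin(\overline{M}) \leq \Rreal(\overline{M}) + 1$. Since $M$ is $2$-regular of total dimension $n \geq 3$ outside the excluded case, the theorem of Brualdi, Manber, and Ross~\cite{BrualdiMR86} mentioned in the introduction gives $\Rreal(\overline{M}) = \Rreal(M)$; additivity of real rank over block diagonal matrices combined with Lemma~\ref{lemma:realrank_D} then yields $\Rreal(M) = \sum_i (n_i - \gcd(n_i,2) + 1)$, which simplifies to $n - m_{even}$. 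Combined with the trivial inequality $\Rbin(\overline{M}) \geq \Rreal(\overline{M})$, this pins $\Rbin(\overline{M})$ to $\{n - m_{even}, n - m_{even} + 1\}$.

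The technical inputs are all either quoted results or short additivity observations, so there is no single hard step. The only delicate points are the edge case $n_i = 2$, where both the binary rank of the diagonal block and the parameter $d_i$ in Theorem~\ref{thm:upper_new_ki} require separate treatment, and the reason for globally excluding the $2 \times 2$ all-ones matrix: that is the only instance with $n = 2$, in which $\overline{M}$ becomes the zero matrix and the identity $\Rreal(M) = \Rreal(\overline{M})$ breaks down.
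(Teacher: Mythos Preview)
Your overall strategy matches the paper's proof: the structural decomposition via Lemma~\ref{lemma:2-reg-structure}, additivity of binary rank over a block diagonal matrix, Theorem~\ref{thm:upper_new_ki} for the upper bound on $\Rbin(\overline{M})$, and the real-rank computation (packaged in the paper as Lemma~\ref{lemma:2-reg-rankR}) for the lower bound.

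There is one genuine slip in your justification of $\Rbin(D_{n_i,n_i-2})=n_i$ for $n_i\geq 3$. Gregory's result is $\Rbin(D_{n,2})=n$, and the column permutation you quote identifies $D_{n,n-2}$ with $\overline{D_{n,2}}$, not with $D_{n,2}$ itself. So from Gregory you would only get $\Rbin(D_{n,n-2})=\Rbin(\overline{D_{n,2}})$, and equating this with $\Rbin(D_{n,2})$ is precisely the content of Pullman's question, which cannot be assumed. The paper instead obtains $\Rbin(D_{n,n-2})=n$ directly from Lemma~\ref{lemma:isolation} (the diagonal entries form an isolation set once $2\leq\lceil n/2\rceil$, i.e.\ $n\geq 3$); this is the sentence in the introduction you were paraphrasing, but the parenthetical reference there is to Lemma~\ref{lemma:isolation}, not to Gregory. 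Replacing your appeal to Gregory by Lemma~\ref{lemma:isolation} fixes the argument and brings it in line with the paper.
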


We now turn to present our lower bounds.
For certain families of $2$-regular matrices $M$, we show that the upper bound of $n-m_{even}+1$ on $\Rbin(\overline{M})$ given in Theorem~\ref{thm:2-regIntro} is tight (see Theorems~\ref{thm:2_even} and~\ref{thm:2_even_odd}).
This is used to prove the following lower bound on the binary rank of the complement of any $2$-regular matrix with a given binary rank $r$.
\begin{theorem}\label{thm:2-regular}
For every integer $r \geq 4$ and for every $2$-regular $0,1$ matrix $M$ with $\Rbin(M) = r$, the binary rank of its complement satisfies
\[\Rbin(\overline{M}) \geq \Big \lceil\frac{3r}{4} \Big \rceil+1.\]
\end{theorem}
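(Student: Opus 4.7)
The plan is to apply Theorem~\ref{thm:2-regIntro} to write $M$, up to row and column permutations, as a $(2;n_1,\ldots,n_m)$ circulant block diagonal matrix, which yields $r = n-m_2$ and the dichotomy $\Rbin(\overline{M}) \in \{n-m_{even},\,n-m_{even}+1\}$. Set $s = n-m_{even}$, so the baseline estimate $\Rbin(\overline{M}) \geq s$ holds unconditionally, whereas Theorems~\ref{thm:2_even} and~\ref{thm:2_even_odd} are needed to upgrade it to $\Rbin(\overline{M}) \geq s+1$ in a handful of exceptional block patterns.

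I would group the blocks by size: let $a$ count the size-$4$ blocks, $b$ count the even blocks of size at least $6$ (total size $B$), $c$ count the odd blocks of size at least $3$ (total size $C$), and $d$ count the size-$2$ blocks. Then $r = 4a+B+C+d$ and $s = 3a+(B-b)+C+d$, so
\[4s-3r \;=\; (B-4b)+C+d.\]
Using $B \geq 6b$ and $C \geq 3c$, this quantity is at least $2b+3c+d$, and enumerating the ways it can drop below $4$ yields exactly three exceptional families: (I) only size-$2$ and size-$4$ blocks, with $d \in \{0,1,2,3\}$; (II) only size-$4$ blocks together with a single size-$3$ block and no size-$2$ blocks; (III) only size-$4$ blocks together with a single size-$6$ block and at most one size-$2$ block. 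When none of these holds, $4s \geq 3r+4$, so $s \geq \lceil (3r+4)/4 \rceil = \lceil 3r/4 \rceil+1$, and the inequality $\Rbin(\overline{M}) \geq s$ already closes the argument.

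For each of the three exceptional families, a direct computation gives $s = \lceil 3r/4 \rceil$ exactly, so the desired bound $\lceil 3r/4 \rceil+1$ coincides with $s+1$. Families~(I) and~(III) consist entirely of even blocks, so Theorem~\ref{thm:2_even} provides $\Rbin(\overline{M}) \geq s+1$; family~(II) combines size-$4$ blocks with a single odd block of size $3$ and is handled by Theorem~\ref{thm:2_even_odd}, again giving $\Rbin(\overline{M}) \geq s+1$. The main obstacle is thus to confirm that the hypotheses of Theorems~\ref{thm:2_even} and~\ref{thm:2_even_odd} are broad enough to cover all three concrete patterns. A minor additional check is that the hypothesis $r \geq 4$ rules out the trivially small configurations: in family~(I) one has $r = 4a+d$, so $a=0$ combined with $d\leq 3$ would force $r\leq 3$; the excluded $2\times 2$ all-one matrix of Theorem~\ref{thm:2-regIntro} likewise has $r=1<4$ and cannot occur.
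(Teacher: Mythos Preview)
Your plan is correct and rests on the same ingredients as the paper's proof (Theorem~\ref{thm:2-regIntro} for the setup, and Theorems~\ref{thm:2_even} and~\ref{thm:2_even_odd} for the borderline cases), but the case decomposition is organized differently. The paper first splits according to whether all blocks are even or at least one is odd; in the all-even branch it applies Theorem~\ref{thm:2_even} uniformly and uses $n\geq 2m_2+4m_l$ to bound $m_l\leq r/4$, while in the some-odd branch it uses the real-rank bound $\Rbin(\overline{M})\geq r-m_l$ together with $n\geq 2m_2+4m_l+3$, invoking Theorem~\ref{thm:2_even_odd} only in the equality subcase (which forces a single odd block of size~$3$) and handling $n=5$ separately. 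Your approach instead computes $4s-3r=(B-4b)+C+d$ directly and enumerates the configurations with $4s-3r\leq 3$; this isolates precisely the three families where Theorem~\ref{thm:2_even} or~\ref{thm:2_even_odd} is genuinely needed, rather than applying them more broadly. One pleasant byproduct is that the paper's ad~hoc $n=5$ case (one size-$3$ block and one size-$2$ block) has $4s-3r=4$, so it falls into your generic case automatically.

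Two small points you should make explicit when writing it out. First, in family~(II) you also need $a\geq 1$: from $r=4a+3\geq 4$ one gets $a\geq 1$, which is exactly what the hypothesis $n>2m_{even}+n_{odd}(n_{odd}-2)$ of Theorem~\ref{thm:2_even_odd} requires (it becomes $4a+3>2a+3$). Second, for families~(I) and~(III) the hypothesis of Theorem~\ref{thm:2_even} that some $n_i>2$ is guaranteed by the presence of a size-$4$ block (forced in~(I) by $r\geq 4$) or the size-$6$ block, respectively. These are the routine verifications you flagged, and they go through without incident.
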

\noindent
Note that Theorem~\ref{thm:gap_kIntro} implies that the bound given in Theorem~\ref{thm:2-regular} is tight (see Corollary~\ref{cor:2-regular}).

The proof of Theorem~\ref{thm:2-regular} relies on some general lower bounds we provide on the binary rank of the complement of $(k_1, \ldots, k_m; n_1, \ldots, n_m)$ circulant block diagonal matrices. In this context, we are particularly interested in identifying the cases for which the binary rank of those matrices exceeds their real rank.
We provide a variety of sufficient conditions for such a statement to hold.
One of them is given by the following theorem.
\begin{theorem}\label{thm:ki_divides_ni+1}
Let $M$ be the complement of a $(k_1, \ldots, k_m; n_1, \ldots, n_m)$ circulant block diagonal matrix for integers satisfying $n_i \geq k_i > 0$ for all $i \in [m]$.
Suppose that for each $i \in [m]$, $k_i$ divides $n_i$, and that some $i \in [m]$ satisfies $n_i > k_i >1$.
Then,
\[\Rbin(M) \geq \Rreal(M) + 1.\]
\end{theorem}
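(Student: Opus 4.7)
The plan is to argue by contradiction. Suppose $\Rbin(M) = r$ where $r = \Rreal(M)$, and fix a partition of the $1$-entries of $M$ into rectangles $R_t = u_t v_t^{\top}$ for $t \in [r]$, where $u_t$ and $v_t$ are $0,1$ vectors. Because the sum $M = \sum_{t=1}^{r} u_t v_t^{\top}$ has exactly $\Rreal(M)$ rank-one terms, both families $\{u_t\}$ and $\{v_t\}$ must be linearly independent over $\R$. As a consequence, every column $c$ of $M$ has the unique expansion $\sum_{t \in S_c} u_t$ in the basis $\{u_t\}$, where $S_c = \{t : (v_t)_c = 1\}$; and, since the $R_t$ form a partition of the ones, the vectors $\{u_t : t \in S_c\}$ must have pairwise disjoint supports for every column $c$.

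I would then focus on the distinguished index $i_0$ with $n_{i_0} > k_{i_0} > 1$ and $k_{i_0} \mid n_{i_0}$, and write $n = n_{i_0}$, $k = k_{i_0}$, and $q = n/k \geq 2$. For each local column index $j \in \{0,1,\ldots,n-1\}$ of block $i_0$, let $c_j$ be the corresponding column of $M$ with index set $S_j$. Outside block $i_0$ every column of $M$ is all-ones, so the common structure is absorbed into the intersections $\bigcap_j S_j$. Inside the block, the complement-of-circulant structure forces the difference $c_{j+1} - c_j$ to equal $e_{a(j)} - e_{b(j)}$ for explicit rows $a(j), b(j)$ in block $i_0$ that depend cyclically on $j$, yielding
\begin{equation*}
\sum_{t \in S_{j+1} \setminus S_j} u_t \;-\; \sum_{t \in S_j \setminus S_{j+1}} u_t \;=\; e_{a(j)} - e_{b(j)}.
\end{equation*}
These $n$ linear identities, together with the disjoint-support constraints inside each $S_j$, should be combined to reach a contradiction.

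The key technical step is to exploit both $k \mid n$ and $k > 1$. Because $k$ divides $n$, the cyclic shift by $k$ acting on the rows and columns of block $i_0$ partitions them into $q$ orbits of size $k$, and the $0$-intervals of successive rows align to form a clean coset structure. I would trace the symmetric differences $S_j \triangle S_{j+k}$ around one such orbit and sum the corresponding identities above to obtain that a particular $\{0,1\}$-combination of the $u_t$'s must produce a specific row-aligned vector; by linear independence this combination is unique, but the disjoint-support condition on every $S_c$ it appears in then forces two basis vectors $u_t$ to coincide (or, equivalently, forces some column of $M$ to have a value inconsistent with its actual entries). The main obstacle I anticipate is the book-keeping that turns the telescoping relation around the cycle into a genuine obstruction that is present precisely when $k>1$ and $k \mid n$: when $k=1$ the relations decouple entry-by-entry and no contradiction arises, whereas for $k>1$ the alignment of the length-$k$ zero intervals, guaranteed by $k \mid n$, produces the needed collision. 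Once this contradiction is established, we conclude $\Rbin(M) \geq \Rreal(M)+1$.
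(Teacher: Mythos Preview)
Your proposal is a plan, not a proof, and the central step is never carried out. The setup is fine: if $|P|=\Rreal(M)$ then the $u_t$'s (and the $v_t$'s) are linearly independent, the vectors $\{u_t:t\in S_c\}$ have pairwise disjoint supports for every column $c$, and the column-difference identity $c_{j+1}-c_j=e_{a(j)}-e_{b(j)}$ inside block $i_0$ is correct. But everything after that is a promise: ``should be combined to reach a contradiction'', ``I would trace the symmetric differences'', ``the main obstacle I anticipate is the book-keeping''. You never explain \emph{which} $\{0,1\}$-combination of the $u_t$'s is forced to equal a specific vector, nor why disjoint-support constraints across \emph{different} columns (where the collision would have to occur) actually conflict. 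A further warning sign: your argument touches only the single block $i_0$, yet the theorem assumes $k_i\mid n_i$ for \emph{every} $i\in[m]$. If your local argument worked as written, it would prove a strictly stronger statement, so either the hypothesis on the other blocks is irrelevant (which you would have to justify) or the missing step does not go through in this generality.

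The paper proceeds by a completely different route that genuinely uses the global hypothesis. First, an averaging argument (Lemma~\ref{lemma:weighted_rec}) shows that if $|P|=\Rreal(M)$ then some rectangle $R=A\times B$ of $P$ satisfies
\[
\Big(\sum_{i}\tfrac{|A_i|}{k_i}\Big)\Big(\sum_{i}\tfrac{|B_i|}{k_i}\Big)\;<\;\sum_{i}\tfrac{n_i}{k_i}-1,
\]
and here $k_i\mid n_i$ for all $i$ enters via $\Rreal(M)=\sum_i(n_i-k_i+1)$ and the inequality $n_i/k_i\le n_i-k_i+1$, strict precisely when $n_i>k_i>1$. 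Second, a rank-preservation lemma (Lemma~\ref{lemma:divides+1}, with $d_i=k_i$) shows that removing such a ``too small'' rectangle from $M$ does not lower the real rank, so $|P|-1\ge\Rreal(M)$. The two steps together give $|P|\ge\Rreal(M)+1$. If you want to rescue your approach you must actually execute the telescoping-to-contradiction step and clarify whether and where the divisibility assumption on the remaining blocks is needed.
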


For the complement of a $(k; n_1, \ldots, n_m)$ circulant block diagonal matrix, the condition in Theorem~\ref{thm:ki_divides_ni+1} requires $k$ to divide $n_i$ for all $i \in [m]$.
However, we prove that the desired lower bound holds even under the weaker condition that some integer $d>1$ satisfies $d = \gcd(n_i,k)$ for all $i \in [m]$ (see Theorem~\ref{thm:common_k_d_same}). As a consequence, we derive the following result for the matrices $D_{n,k}$, thus extending a result of~\cite{Gregory89} which concerns the case of $k=2$.
\begin{theorem}\label{thm:D_n_k}
For all integers $n > k >0$, $\Rbin(D_{n,k}) \geq \min(\Rreal(D_{n,k})+1,n)$.
\end{theorem}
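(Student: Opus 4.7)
The plan is to reduce Theorem~\ref{thm:D_n_k} to Theorem~\ref{thm:common_k_d_same}, the strengthening of Theorem~\ref{thm:ki_divides_ni+1} mentioned in the paragraph preceding the statement. The key structural observation I would use is that $D_{n,k}$ is, up to a cyclic column permutation, the complement of a single-block $(k;n)$ circulant block diagonal matrix. Indeed, $\overline{D_{n,k}}$ is circulant with first row consisting of $n-k$ zeros followed by $k$ ones, and cyclically shifting its columns produces the circulant matrix whose first row is $k$ ones followed by $n-k$ zeros, namely $D_{n,n-k}$, which by Definition~\ref{def:circ_block} is the $(k;n)$ circulant block diagonal matrix with $m=1$. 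Because both the binary rank and the real rank are invariant under row and column permutations, this gives $\Rbin(D_{n,k}) = \Rbin(\overline{D_{n,n-k}})$ and $\Rreal(D_{n,k}) = \Rreal(\overline{D_{n,n-k}})$.

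Next I would split into two cases according to $d := \gcd(n,k)$. If $d = 1$, then by Lemma~\ref{lemma:realrank_D} we have $\Rreal(D_{n,k}) = n$, so $\min(\Rreal(D_{n,k})+1, n) = n$, and the desired bound follows immediately from the general inequality $\Rbin(D_{n,k}) \geq \Rreal(D_{n,k}) = n$. If $d > 1$, then in particular $k \geq d \geq 2$, so $n > k > 1$, and I would apply Theorem~\ref{thm:common_k_d_same} to the complement of $D_{n,n-k}$; with $m=1$ the common-gcd hypothesis is automatic and reduces to the condition $\gcd(n,k) > 1$. The conclusion is $\Rbin(\overline{D_{n,n-k}}) \geq \Rreal(\overline{D_{n,n-k}}) + 1$, which via the identifications above yields $\Rbin(D_{n,k}) \geq \Rreal(D_{n,k}) + 1$, and since the right-hand side is at most $\min(\Rreal(D_{n,k})+1, n)$ when $d \geq 2$, the claimed inequality follows.

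The substantive work is packed into Theorem~\ref{thm:common_k_d_same}; once it is available, Theorem~\ref{thm:D_n_k} reduces to the essentially mechanical translation between $D_{n,k}$ and a single-block member of the circulant block diagonal family. The only genuine subtlety I anticipate is handling the edge case $d=1$, where Theorem~\ref{thm:common_k_d_same} does not apply; but there Lemma~\ref{lemma:realrank_D} forces $\Rreal(D_{n,k}) = n$, so the $\min$ on the right-hand side collapses to $n$ and the lower bound is simply the trivial $\Rbin \geq \Rreal$.
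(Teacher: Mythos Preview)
Your proposal is correct and follows essentially the same route as the paper: split on $d=\gcd(n,k)$, handle $d=1$ via Lemma~\ref{lemma:realrank_D} and the trivial bound $\Rbin\geq\Rreal$, and for $d>1$ apply Theorem~\ref{thm:common_k_d_same} with $m=1$. The only difference is that you spell out explicitly the identification of $D_{n,k}$ with $\overline{D_{n,n-k}}$ up to a column permutation, which the paper leaves implicit when it invokes Theorem~\ref{thm:common_k_d_same} directly on $D_{n,k}$.
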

\noindent
We refer the reader to Theorems~\ref{thm:ni=ki+di},~\ref{thm:common_k_d}, and~\ref{thm:prime} for additional sufficient conditions for the binary rank of the complement of a $(k_1, \ldots, k_m; n_1, \ldots, n_m)$ circulant block diagonal matrix to exceed its real rank.

We conclude this section with a few questions raised by our work.
It would be interesting to better understand the binary rank of the complement of $(k_1, \ldots, k_m ; n_1, \ldots, n_m)$ circulant block diagonal matrices.
In particular, it would be nice to figure out when the upper bound provided by Theorem~\ref{thm:upper_new_ki} is tight.
While we have shown that this bound is tight in several cases, we are not aware of any choice of the parameters for which it is not.
Of special interest is the problem of determining the binary rank of the single-block matrices $D_{n,k}$ for $k \geq 3$.
Note that for $k=3$, a conjecture of Hefner et al.~\cite{HefnerHLM90} asserts that $\Rbin(D_{n,3})=n$ for all $n \geq 6$.
Finally, it would be interesting to extend the statement of Theorem~\ref{thm:2-regular} and to determine for all integers $k$ and $r$, the smallest possible binary rank of the complement of a $k$-regular matrix with binary rank $r$.

\subsection{Outline}
The rest of the paper is organized as follows.
In Section~\ref{sec:prem}, we gather several definitions and results that will be used throughout the paper.
In Section~\ref{sec:upper}, we present our general method for proving upper bounds on the binary rank of block matrices and derive Theorems~\ref{thm:upper_new_ki},~\ref{thm:gap_kIntro}, and~\ref{thm:2-regIntro}.
In Section~\ref{sec:lower}, we provide sufficient conditions for the binary rank of the complement of a $(k_1, \ldots, k_m ; n_1, \ldots, n_m)$ circulant block diagonal matrix to exceed its real rank, and confirm Theorems~\ref{thm:ki_divides_ni+1} and~\ref{thm:D_n_k}.
Finally, in Section~\ref{sec:2-reg}, we focus on the binary rank of the complement of $2$-regular matrices and prove Theorem~\ref{thm:2-regular}.

\section{Preliminaries}\label{sec:prem}

\subsection{The Matrix $D_{n,k}$}

A circulant matrix is a square matrix in which every row is obtained by a cyclic shift of the row that precedes it by one element to the right.
Recall that for integers $n \geq k \geq 0$, $D_{n,k}$ is the $n \times n$ circulant matrix whose first row starts with $n-k$ ones followed by $k$ zeros.
It will be convenient here to state the results on these matrices in terms of the $k$-regular matrix $D_{n,n-k}$, whose first row has $k$ ones followed by $n-k$ zeros.
Its real rank is determined by the following lemma, which stems from a result of Ingleton~\cite{ingleton1956rank} (see, e.g.,~\cite{GradyN97}).
\begin{lemma}\label{lemma:realrank_D}
For all integers $n \geq k >0$, $\Rreal(D_{n,n-k}) =n - \gcd(n,k) + 1$.
\end{lemma}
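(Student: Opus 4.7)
The plan is to exploit the circulant structure of $D_{n,n-k}$ via the standard Fourier/eigenvalue analysis of circulant matrices. Write $D_{n,n-k} = \sum_{j=0}^{k-1} P^j$, where $P$ is the $n \times n$ cyclic shift permutation matrix sending $e_i$ to $e_{i+1 \bmod n}$. Since $P$ is diagonalized by the discrete Fourier transform, its eigenvalues are precisely the $n$-th roots of unity $\omega^i$ for $i = 0, 1, \ldots, n-1$, where $\omega = e^{2\pi \sqrt{-1}/n}$, and all $P^j$ are simultaneously diagonalized in the same basis. Hence the eigenvalues of $D_{n,n-k}$ are the values $p(\omega^i)$ for $i = 0, 1, \ldots, n-1$, where
\[
p(x) = 1 + x + x^2 + \cdots + x^{k-1}.
\]
Since $D_{n,n-k}$ is diagonalizable, its real rank equals the number of indices $i$ for which $p(\omega^i) \neq 0$.

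Next I would count the zeros of $p$ among the $n$-th roots of unity. Using the factorization $p(x) \cdot (x-1) = x^k - 1$, a value $\omega^i$ is a root of $p$ precisely when $\omega^i$ is a $k$-th root of unity and $\omega^i \neq 1$. The simultaneous $n$-th and $k$-th roots of unity form the group of $d$-th roots of unity, where $d = \gcd(n,k)$, so there are exactly $d$ of them; subtracting the root $1$ gives $d - 1$ zero eigenvalues of $D_{n,n-k}$. Consequently,
\[
\Rreal(D_{n,n-k}) = n - (d-1) = n - \gcd(n,k) + 1,
\]
as claimed. This covers both the case $n > k$ and the boundary case $n = k$, where $d = n$ and the matrix is the all-ones matrix of rank $1$.

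There is no serious obstacle here: the only step that requires a bit of care is the handling of the root $x = 1$, which must be excluded from the common roots of $x^k - 1$ and $x^n - 1$ (and indeed $p(1) = k \neq 0$, consistent with the formula). Alternatively, one can cite Ingleton's result directly, as indicated in the statement, and observe that the matrix $D_{n,n-k}$ falls squarely within the class of $\{0,1\}$ circulants it treats; the self-contained eigenvalue computation above has the advantage of being short and of making the role of $\gcd(n,k)$ completely transparent for use in subsequent sections.
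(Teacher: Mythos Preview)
Your proof is correct. The paper does not actually prove this lemma: it merely states that the result ``stems from a result of Ingleton'' and gives a reference. Your argument via the Fourier diagonalization of circulant matrices is the standard self-contained route and is entirely sound; the only small point worth making explicit is that the rank you compute is a priori the rank over $\mathbb{C}$, but of course the rank of a real matrix is unchanged under field extension, so $\Rreal$ is indeed $n-\gcd(n,k)+1$. Since the paper offers no proof to compare against, your write-up in fact supplies more than the paper does here.
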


We next observe that the binary rank of $D_{n,n-k}$ is full, provided that the number $k$ of ones in a row does not exceed $\lceil n/2 \rceil$.
\begin{lemma}\label{lemma:isolation}
For all integers $n$ and $k$ such that $0<k \leq \lceil n/2 \rceil$, $\Rbin(D_{n,n-k}) =n$.
\end{lemma}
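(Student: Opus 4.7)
The plan is to combine the trivial upper bound $\Rbin(D_{n,n-k}) \leq n$ with a matching fooling-set lower bound. The upper bound is immediate: each of the $n$ rows of $D_{n,n-k}$ contains $k \geq 1$ ones and forms a $1 \times k$ all-ones rectangle, so the row partition uses $n$ rectangles.

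For the lower bound, recall that a \emph{fooling set} for a $0,1$ matrix $M$ is a collection $S$ of one-entries such that for every two distinct positions $(i,j), (i',j') \in S$ at least one of $M_{i,j'}$ or $M_{i',j}$ equals $0$. Any all-ones combinatorial rectangle contains at most one element of $S$, so $|S|$ lower-bounds the biclique partition number, hence $\Rbin(M) \geq |S|$. I would take as fooling set the diagonal $S = \{(i,i) : 0 \leq i \leq n-1\}$, which consists of ones because $D_{n,n-k}$ is the circulant matrix with $(i,j) = 1$ iff $(j-i) \bmod n \in \{0,1,\ldots,k-1\}$.

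The main verification is that $S$ is indeed a fooling set under the hypothesis $0 < k \leq \lceil n/2 \rceil$. For two distinct indices $i \neq i'$, let $d = (i'-i) \bmod n$, so $d \in \{1,\ldots,n-1\}$ and $(i'-i)$ and $(i-i')$ reduce mod $n$ to $d$ and $n-d$. Thus $(i,i') = 1$ iff $d \in \{1,\ldots,k-1\}$, and $(i',i) = 1$ iff $d \in \{n-k+1,\ldots,n-1\}$. Both entries are ones simultaneously only when $\{1,\ldots,k-1\} \cap \{n-k+1,\ldots,n-1\} \neq \emptyset$, which requires $k-1 \geq n-k+1$, i.e.\ $k \geq (n+2)/2$. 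The hypothesis $k \leq \lceil n/2 \rceil$ rules this out for both parities of $n$, so at least one of the two cross-entries is zero, confirming the fooling property.

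The only subtle point is the arithmetic case analysis verifying that the two intervals above are disjoint exactly in the range $k \leq \lceil n/2 \rceil$; this is the tight threshold, and it is a direct check for $n$ even and $n$ odd separately. Combining $|S|=n$ with the trivial upper bound yields $\Rbin(D_{n,n-k}) = n$.
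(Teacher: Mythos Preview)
Your proof is correct and follows essentially the same approach as the paper: both use the trivial row-partition for the upper bound and the diagonal as a fooling set (the paper calls it an \emph{isolation set}) for the lower bound. The only difference is that you spell out the arithmetic verification that the two cross-entries cannot both be ones when $k \leq \lceil n/2 \rceil$, whereas the paper leaves this as ``it can be verified.''
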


\begin{proof}
It clearly holds that $\Rbin(D_{n,n-k}) \leq n$, because the ones of every row can be covered by a single combinatorial rectangle.
For the other direction, consider the $n$ diagonal entries of $D_{n,n-k}$.
By $k > 0$, they are all ones.
Using $k \leq \lceil n/2 \rceil$, it can be verified that they form an isolation set, that is, no two of them belong to an all-one $2 \times 2$ matrix. Therefore, no two of them can belong to a common combinatorial rectangle of ones. This implies that $\Rbin(D_{n,n-k}) \geq n$ and completes the proof.
\end{proof}

\subsection{Block Matrices}

\subsubsection{Notation}
For integers $m \geq 1$ and $n_1, \ldots, n_m \geq 1$, let $M$ be a block matrix with $m$ diagonal blocks, where for each $i \in [m]$, the $i$th diagonal block of $M$ is of dimensions $n_i \times n_i$.
Throughout the work, we denote the index of a row or a column of such a matrix $M$ by a pair $(i,j)$ with $i \in [m]$ and $j \in [n_i]$, referring to the $j$th row or column of the $i$th diagonal block of $M$.
We refer to the sub-matrix of $M$ that consists of the rows of $\{i\} \times [n_i]$ and the columns of $\{j\} \times [n_j]$ as the $(i,j)$ block of $M$.
Note that the $(i,i)$ block of $M$ is its $i$th diagonal block.

\subsubsection{Regular Diagonal Blocks}

It was shown in~\cite{BrualdiMR86} that for every $k$-regular $0,1$ matrix $M$ of dimensions $n \times n$ with $n > k > 0$, it holds that $\Rreal(M) = \Rreal(\overline{M})$. The following lemma generalizes the result of~\cite{BrualdiMR86} to block diagonal matrices whose diagonal blocks are regular.

\begin{lemma}\label{lemma:rank_ki}
Let $M$ be a block matrix with $m$ diagonal blocks, where for each $i \in [m]$, the $i$th diagonal block of $M$ is a $k_i$-regular $0,1$ matrix of dimensions $n_i \times n_i$ for integers $n_i \geq k_i>0$, and all of whose other entries are zeros. Then, $\Rreal(M) = \Rreal(\overline{M})$, unless $M$ is an all-one matrix.
\end{lemma}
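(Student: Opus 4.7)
The plan is to establish both $\Rreal(M)\le \Rreal(\overline{M})$ and $\Rreal(\overline{M})\le \Rreal(M)$ by exploiting the identity $\overline{M}=J-M$, where $J=\mathbf{1}_n\mathbf{1}_n^T$ is the $n\times n$ all-ones matrix with $n=\sum_i n_i$. The key observation I would use is that if $\mathbf{1}_n$ lies in the column span of a matrix $A$, then every column of $J-A$ also lies there, so $\Rreal(J-A)\le \Rreal(A)$. Applying this to $A=M$ gives the first inequality, and applying it to $A=\overline{M}$ (using $M=J-\overline{M}$) gives the second. The task then reduces to showing that $\mathbf{1}_n$ lies in the column span of both $M$ and $\overline{M}$, under the hypothesis that $M$ is not the all-ones matrix.

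For $M$ this is immediate from regularity: since each $B_i$ is $k_i$-regular with $k_i>0$, we have $B_i\cdot\tfrac{1}{k_i}\mathbf{1}_{n_i}=\mathbf{1}_{n_i}$, and stacking these vectors across blocks produces $u\in\mathbb{R}^n$ with $Mu=\mathbf{1}_n$, using that $M$ vanishes off its diagonal blocks.

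The main obstacle is the analogous statement for $\overline{M}$, which has no block-diagonal structure. I would look for a preimage $v$ of the symmetric form $v=(\alpha_1\mathbf{1}_{n_1},\ldots,\alpha_m\mathbf{1}_{n_m})^T$. The diagonal blocks of $\overline{M}$ contribute $\alpha_i(n_i-k_i)\mathbf{1}_{n_i}$ (since $\overline{B_i}$ is $(n_i-k_i)$-regular, a fact that includes the degenerate case $n_i=k_i$), while each off-diagonal $(i,j)$ block is the all-ones block $J_{n_i\times n_j}$ and contributes $\alpha_j n_j\mathbf{1}_{n_i}$. Setting $T:=\sum_j\alpha_j n_j$, the $i$th block of $\overline{M}v$ simplifies to $(T-\alpha_i k_i)\mathbf{1}_{n_i}$. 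Demanding that this equal $\mathbf{1}_{n_i}$ for every $i$ forces $\alpha_i=(T-1)/k_i$, and summing these yields $T=A/(A-1)$ where $A:=\sum_i n_i/k_i$, provided $A\ne 1$.

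What remains is to check that $A\ne 1$ outside the excluded case. Since $n_i/k_i\ge 1$ with equality iff $n_i=k_i$, we have $A\ge m\ge 1$, and $A=1$ forces $m=1$ with $n_1=k_1$, precisely the case in which $M$ is the all-ones matrix. In every other case the system has the explicit solution $\alpha_i=1/\bigl(k_i(A-1)\bigr)$, so $\mathbf{1}_n$ lies in the column span of $\overline{M}$, and the two containments combine to give $\Rreal(M)=\Rreal(\overline{M})$.
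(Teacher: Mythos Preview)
Your proof is correct and follows essentially the same approach as the paper: both show that the all-one vector lies in the (column/row) span of $M$ and of $\overline{M}$, using block-constant coefficients determined by the regularity, and deduce $\Rreal(M)=\Rreal(\overline{M})$ from these two facts. The only cosmetic difference is that the paper packages the rank inequality via a cited criterion of Ryser (Lemma~\ref{lemma:Ingle}), whereas you prove the needed containment $\mathrm{colspan}(J-A)\subseteq\mathrm{colspan}(A)$ directly; your explicit solution $\alpha_i=1/\bigl(k_i(A-1)\bigr)$ and the paper's choice $a_i=\prod_{j\neq i}k_j$ both hinge on the same nondegeneracy condition $\sum_i n_i/k_i\neq 1$.
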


For the proof of Lemma~\ref{lemma:rank_ki}, we need the following lemma attributed to Ryser in~\cite{BrualdiMR86}.
\begin{lemma}[{\cite[Lemma~2.1]{BrualdiMR86}}]\label{lemma:Ingle}
For every $0,1$ matrix $M$, the following two statements are equivalent.
\begin{enumerate}
  \item $\Rreal(M) = \Rreal(\overline{M})$.
  \item The all-one vector belongs to the row span of $M$ if and only if it belongs to the row span of $\overline{M}$.
\end{enumerate}
\end{lemma}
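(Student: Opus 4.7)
The plan is to reduce both directions of the equivalence to a single identity by augmenting $M$ and $\overline{M}$ with an extra row equal to the all-one vector. Concretely, let $\mathbf{1}$ denote the all-one row vector of the appropriate length, and let
\[
M^{+} = \begin{pmatrix} M \\ \mathbf{1} \end{pmatrix}, \qquad \overline{M}^{+} = \begin{pmatrix} \overline{M} \\ \mathbf{1} \end{pmatrix}.
\]
The first observation I would make is the general fact that for any matrix $A$, appending the row $\mathbf{1}$ changes the rank by $0$ if $\mathbf{1}$ already lies in the row span of $A$, and by exactly $1$ otherwise. Writing $\chi_M \in \{0,1\}$ for the indicator that $\mathbf{1} \notin \linspan_{\mathrm{row}}(M)$, and similarly $\chi_{\overline{M}}$, this gives $\Rreal(M^{+}) = \Rreal(M) + \chi_M$ and $\Rreal(\overline{M}^{+}) = \Rreal(\overline{M}) + \chi_{\overline{M}}$.

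The crucial step is then to show that $M^{+}$ and $\overline{M}^{+}$ have the same row span, which will force $\Rreal(M^{+}) = \Rreal(\overline{M}^{+})$. This uses the identity $M + \overline{M} = J$ where $J$ is the all-one matrix: each row of $\overline{M}$ equals $\mathbf{1}$ minus the corresponding row of $M$, hence lies in the row span of $M^{+}$; symmetrically, each row of $M$ lies in the row span of $\overline{M}^{+}$. Since both augmented matrices already contain the row $\mathbf{1}$, each is contained in the row span of the other.

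Combining the two preceding observations yields
\[
\Rreal(M) - \Rreal(\overline{M}) = \chi_{\overline{M}} - \chi_M \in \{-1,0,1\}.
\]
Since the right-hand side vanishes if and only if $\chi_M = \chi_{\overline{M}}$, i.e., if and only if $\mathbf{1}$ lies in the row span of $M$ precisely when it lies in the row span of $\overline{M}$, this gives both directions of the equivalence at once. There is no real obstacle here; the only subtlety to record carefully is that $\chi_M, \chi_{\overline{M}} \in \{0,1\}$, so that their difference can be read off unambiguously from the rank equation, making the equivalence sharp rather than merely one-directional.
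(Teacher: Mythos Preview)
Your argument is correct. The paper itself does not supply a proof of this lemma: it is quoted as \cite[Lemma~2.1]{BrualdiMR86} (attributed there to Ryser) and used as a black box in the proof of Lemma~\ref{lemma:rank_ki}. So there is no ``paper's own proof'' to compare against; your augmentation-by-$\mathbf{1}$ argument is precisely the standard one and goes through without issue.
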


\begin{proof}[ of Lemma~\ref{lemma:rank_ki}]
Let $M$ be a matrix as in the statement of the lemma. It can be assumed that $m \geq 2$ or $n_1 > k_1$, because otherwise $M$ is an all-one matrix.
For every $i \in [m]$, the $i$th diagonal block of $M$ is $k_i$-regular, so in particular, it has $k_i$ ones in each of its columns.
Consider the vector obtained by summing the rows of $M$ that correspond to the $i$th block, and observe that it has the value $k_i$ in the entries of the $i$th block and zeros elsewhere. By considering the linear combination of the rows of $M$, where for each $i \in [m]$, the coefficient of every row of the $i$th block is $1/k_i$, it follows that the all-one vector belongs to the row span of $M$.

Consider next the linear combination of the rows of $\overline{M}$, where for each $i \in [m]$, the coefficient of every row of the $i$th block is $a_i = \prod_{j \in [m] \setminus \{i\}}{k_j}$. Observe that for each $i \in [m]$, the value of the obtained vector in the entries of the $i$th block is
\[ a_i  \cdot (n_i-k_i) + \sum_{j \in [m] \setminus \{i\}}{a_j \cdot n_j} = \sum_{j=1}^{m}{a_j \cdot n_j}- a_i \cdot k_i = \sum_{j=1}^{m}{a_j \cdot n_j}-\prod_{j=1}^{m}{k_j}.\]
Notice that the latter is independent of $i$. Moreover, it is nonzero, because $a_i >0$ for all $i \in [m]$ and, in addition, $m \geq 2$ or $n_1 > k_1$. It thus follows that the all-one vector belongs to the row span of $\overline{M}$. Since the all-one vector belongs to the row span of both $M$ and $\overline{M}$, Lemma~\ref{lemma:Ingle} implies that $\Rreal(M) = \Rreal(\overline{M})$, as required.
\end{proof}

\subsubsection{Circulant Block Diagonal Matrices}

Recall that a matrix is called $(k_1, \ldots, k_m ; n_1, \ldots, n_m)$ circulant block diagonal if it is a block matrix with $m$ diagonal blocks, such that for each $i \in [m]$, its $i$th diagonal block is $D_{n_i,n_i-k_i}$, and all of whose other entries are zeros (see Definition~\ref{def:circ_block}).
By combining Lemmas~\ref{lemma:realrank_D} and~\ref{lemma:rank_ki}, we determine the real rank of such matrices and of their complement.

\begin{lemma}\label{lemma:rank_M_comp}
Let $M$ be a $(k_1, \ldots, k_m; n_1, \ldots, n_m)$ circulant block diagonal matrix for integers satisfying $n_i \geq k_i > 0$ for all $i \in [m]$.
If $M$ is not an all-one matrix, then
\[\Rreal(M) = \Rreal(\overline{M}) = \sum_{i=1}^{m}{(n_i-\gcd(n_i,k_i)+1)}.\]
\end{lemma}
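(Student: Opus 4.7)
The plan is to prove the two equalities essentially as direct consequences of the two lemmas that were just established, together with the block-diagonal structure of $M$. The structure is clean enough that no novel argument is needed; the task is mainly to check hypotheses.

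First, I would verify that Lemma~\ref{lemma:rank_ki} applies. For each $i \in [m]$, the diagonal block $D_{n_i, n_i - k_i}$ is by definition a circulant $0,1$ matrix whose first row has $k_i$ ones followed by $n_i - k_i$ zeros; since every row is a cyclic shift of this row, each row and each column has exactly $k_i$ ones. Hence the $i$th diagonal block is a $k_i$-regular $0,1$ matrix of dimensions $n_i \times n_i$ with $n_i \geq k_i > 0$, matching the hypothesis of Lemma~\ref{lemma:rank_ki}. Since $M$ is assumed not to be all-one (the only way a matrix of this form can be all-one is if $m=1$ and $k_1 = n_1$, i.e., the block is $D_{n_1,0}$), Lemma~\ref{lemma:rank_ki} yields $\Rreal(M) = \Rreal(\overline{M})$.

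Next, I would compute $\Rreal(M)$ using the block-diagonal structure. Because $M$ has the matrices $D_{n_i, n_i - k_i}$ on the diagonal and zeros everywhere else, its row (and column) space decomposes as the direct sum of the row (respectively column) spaces of its diagonal blocks. Therefore
\[
\Rreal(M) \;=\; \sum_{i=1}^{m} \Rreal(D_{n_i, n_i - k_i}).
\]
Since $n_i \geq k_i > 0$ for every $i \in [m]$, Lemma~\ref{lemma:realrank_D} applies to each diagonal block and gives $\Rreal(D_{n_i, n_i - k_i}) = n_i - \gcd(n_i, k_i) + 1$. Summing over $i \in [m]$ produces the claimed value $\sum_{i=1}^m (n_i - \gcd(n_i, k_i) + 1)$, and combined with the previous paragraph this yields the full statement.

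There is no real obstacle here; the only small subtleties are (a) confirming that $D_{n_i,n_i-k_i}$ is indeed $k_i$-regular so that Lemma~\ref{lemma:rank_ki} can be invoked, and (b) noting that our global hypothesis ``$M$ is not all-one'' is precisely the exception carved out in Lemma~\ref{lemma:rank_ki}. Once these are in place the proof reduces to a direct two-line computation.
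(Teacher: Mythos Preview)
Your proposal is correct and follows essentially the same approach as the paper: apply Lemma~\ref{lemma:rank_ki} (after noting each diagonal block $D_{n_i,n_i-k_i}$ is $k_i$-regular and $M$ is not all-one) to get $\Rreal(M)=\Rreal(\overline{M})$, then use the block-diagonal structure together with Lemma~\ref{lemma:realrank_D} to compute $\Rreal(M)=\sum_{i=1}^m(n_i-\gcd(n_i,k_i)+1)$.
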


\begin{proof}
By Definition~\ref{def:circ_block}, for each $i \in [m]$ it holds that the $i$th diagonal block of $M$ is equal to $D_{n_i,n_i-k_i}$, and is thus $k_i$-regular, and all the other entries of $M$ are zeros.
Since $M$ is not an all-one matrix, using the assumption $n_i \geq k_i > 0$ for all $i \in [m]$, we can apply Lemma~\ref{lemma:rank_ki} to obtain that
\begin{eqnarray*}
\Rreal(M) = \Rreal(\overline{M}).
\end{eqnarray*}
Further, since $M$ is block diagonal, its real rank is the sum of the real rank of its diagonal blocks.
Using Lemma~\ref{lemma:realrank_D}, we obtain that
\[ \Rreal(M) = \sum_{i=1}^{m}{\Rreal(D_{n_i,n_i-k_i})} = \sum_{i=1}^{m}{(n_i-\gcd(n_i,k_i)+1)},\]
and we are done.
\end{proof}

We derive the following lemma.
\begin{lemma}\label{lemma:2-reg-rankR}
Let $M$ be a $(2; n_1, \ldots, n_m)$ circulant block diagonal matrix for integers satisfying $n_i \geq 2$ for all $i \in [m]$, and put $n = \sum_{i=1}^{m}{n_i}$.
Let $m_{even}$ denote the number of indices $i \in [m]$ for which $n_i$ is even.
If $M$ is not the all-one $2 \times 2$ matrix, then
\[\Rreal(M) = \Rreal(\overline{M}) = n-m_{even}.\]
\end{lemma}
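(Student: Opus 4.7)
The plan is to derive this lemma as an immediate specialization of Lemma~\ref{lemma:rank_M_comp} to the case $k_1 = \cdots = k_m = 2$, followed by a simple arithmetic simplification of the resulting sum.

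First I would verify the hypotheses of Lemma~\ref{lemma:rank_M_comp}. With $k_i = 2$ for all $i$, the assumption $n_i \geq 2$ gives $n_i \geq k_i > 0$ as required. It also must be checked that $M$ is not an all-one matrix: the only $(2; n_1, \ldots, n_m)$ circulant block diagonal matrix that is all-one is the $2 \times 2$ matrix obtained when $m = 1$ and $n_1 = 2$ (for $m \geq 2$ the off-diagonal blocks are zero, and for $n_1 > 2$ the single block $D_{n_1, n_1 - 2}$ has zeros). This exceptional case is excluded by hypothesis, so Lemma~\ref{lemma:rank_M_comp} applies and gives
\[
\Rreal(M) \;=\; \Rreal(\overline{M}) \;=\; \sum_{i=1}^{m}\bigl(n_i - \gcd(n_i, 2) + 1\bigr).
\]

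All that remains is the simplification. For each $i$, one has $\gcd(n_i, 2) = 2$ when $n_i$ is even and $\gcd(n_i, 2) = 1$ when $n_i$ is odd, so $\gcd(n_i, 2) - 1$ is the indicator of $n_i$ being even. Therefore $\sum_{i=1}^{m}\bigl(\gcd(n_i,2) - 1\bigr) = m_{even}$, and the displayed sum becomes $\sum_{i=1}^{m} n_i - m_{even} = n - m_{even}$, matching the claim.

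There is no substantive obstacle: the lemma simply repackages the $k=2$ case of Lemma~\ref{lemma:rank_M_comp} in the notation that will be convenient for the later results on $2$-regular matrices. The only mild care needed is the bookkeeping to exclude the degenerate $2 \times 2$ all-one matrix so that Lemma~\ref{lemma:rank_M_comp} is applicable.
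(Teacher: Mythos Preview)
Your proposal is correct and follows essentially the same approach as the paper: apply Lemma~\ref{lemma:rank_M_comp} with $k_i=2$ and simplify $\sum_{i=1}^{m}(n_i-\gcd(n_i,2)+1)$ to $n-m_{even}$. The paper's proof is terser but identical in substance.
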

\begin{proof}
Suppose that $M$ is not the all-one $2 \times 2$ matrix.
By Lemma~\ref{lemma:rank_M_comp}, we obtain that
\[\Rreal(M) = \Rreal(\overline{M}) = \sum_{i=1}^{m}{(n_i - \gcd(n_i,2)+1)} = n - m_{even},\]
so we are done.
\end{proof}

\subsubsection{A Characterization of $2$-Regular Matrices}

The following simple lemma characterizes the structure of $2$-regular $0,1$ matrices.
Its proof uses an argument of Breuer~\cite{Breuer69}, presented here essentially for completeness.
In what follows, a matrix is said to have a {\em single block} if it is impossible to apply permutations to its rows and columns and to obtain a block matrix with two square diagonal blocks, where all the other entries are zeros.
\begin{lemma}\label{lemma:2-reg-structure}
Every $2$-regular $0,1$ matrix is equal, up to permutations of the rows and columns, to a $(2; n_1, \ldots, n_m)$ circulant block diagonal matrix for some integers $m \geq 1$ and $n_1, \ldots, n_m \geq 2$.
\end{lemma}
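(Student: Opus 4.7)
The plan is to translate the matrix into a graph-theoretic object and read off the structure from its connected components. Associate with $M$ the bipartite graph $G$ on vertex set $R \cup C$, where $R = \{r_1,\ldots,r_n\}$ indexes the rows and $C = \{c_1,\ldots,c_n\}$ indexes the columns, and $\{r_i,c_j\}$ is an edge precisely when $M_{i,j}=1$. Because $M$ is $2$-regular, every row vertex and every column vertex of $G$ has degree exactly $2$, so $G$ is a $2$-regular bipartite graph. A standard fact (and the content of Breuer's observation) is that any $2$-regular graph is a disjoint union of simple cycles; since $G$ is bipartite, each of these cycles has even length.

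Next, I would work one connected component at a time. Suppose a cycle of $G$ has length $2n_i$, so it visits $n_i$ vertices from $R$ and $n_i$ vertices from $C$, alternately. Walking along the cycle, relabel these row vertices as $r^{(i)}_1,\ldots,r^{(i)}_{n_i}$ and the column vertices as $c^{(i)}_1,\ldots,c^{(i)}_{n_i}$ in the order in which they appear, starting with a row vertex. Then $r^{(i)}_j$ is adjacent to exactly $c^{(i)}_{j-1}$ and $c^{(i)}_j$, where the indices are taken modulo $n_i$ (with $c^{(i)}_0 := c^{(i)}_{n_i}$). After a cyclic shift of the column labels, row $j$ of the corresponding $n_i \times n_i$ submatrix has its two ones in columns $j$ and $j+1 \pmod{n_i}$, which is precisely the circulant matrix $D_{n_i,n_i-2}$ whose first row is $(1,1,0,\ldots,0)$. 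Note that $n_i \geq 2$ because the shortest cycle in a bipartite graph has length at least $4$.

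Finally, I would concatenate the orderings obtained from the components. Arrange the rows of $M$ in the order $r^{(1)}_1,\ldots,r^{(1)}_{n_1}, r^{(2)}_1,\ldots,r^{(2)}_{n_2}, \ldots, r^{(m)}_1,\ldots,r^{(m)}_{n_m}$, and analogously for the columns. Since rows and columns belonging to different components of $G$ share no edges, every entry of $M$ outside the diagonal blocks is zero. The $i$th diagonal block is $D_{n_i,n_i-2}$ by the previous paragraph, and $\sum_i n_i = n$ because the components partition the row (and column) vertex set. Hence the permuted matrix is exactly a $(2;n_1,\ldots,n_m)$ circulant block diagonal matrix, as required.

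The only real obstacle is bookkeeping the ordering carefully enough to confirm the circulant structure of each block; once the cycle traversal is written down, the rest is immediate. No case analysis or auxiliary results beyond the ``$2$-regular equals union of even cycles'' observation is needed.
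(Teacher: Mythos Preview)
Your proposal is correct and is essentially the same argument as the paper's, which the paper attributes to Breuer: both decompose the matrix into irreducible pieces and then traverse each piece to produce the circulant ordering. The only difference is packaging---you phrase the decomposition via the bipartite graph and the standard ``$2$-regular graph is a disjoint union of cycles'' fact, whereas the paper first splits $M$ into single-block pieces directly and then builds the permutation row by row (put the first row in the form $(1,1,0,\ldots,0)$, then use $2$-regularity to force the second row to be $(0,1,1,0,\ldots,0)$, and so on). Your graph-theoretic framing is a bit cleaner, but the underlying idea and the amount of work are identical.
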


\begin{proof}
Let $M$ be a $2$-regular $0,1$ matrix.
We first claim that $M$ is equal, up to permutations of rows and columns, to a block matrix with square diagonal blocks, such that each diagonal block has a single block and all the other entries are zeros.
Indeed, this trivially holds in case that $M$ has a single block.
Otherwise, it is possible to apply permutations to its rows and columns to obtain a block matrix with two square diagonal blocks, where all the other entries are zeros.
By repeatedly applying this procedure to the obtained diagonal blocks, we get a matrix with the desired form.
Let $m \geq 1$ denote the number of its diagonal blocks, and for each $i \in [m]$, let $n_i$ denote the number of rows (or columns) in the $i$th diagonal block.

Since $M$ is $2$-regular, it follows that each of the above diagonal blocks is $2$-regular as well, and thus $n_i \geq 2$ for all $i \in [m]$.
We claim that the $i$th diagonal block is equal, up to permutations of its rows and columns, to the matrix $D_{n_i,n_i-2}$.
Indeed, if $n_i=2$, then the $i$th diagonal block is $D_{2,0}$.
Otherwise, using the $2$-regularity, one can apply a suitable permutation of columns to obtain $(1,1,0\ldots,0)$ as the first row of the block.
Using again the $2$-regularity and the fact that $n_i >2$, one can apply suitable permutations of rows and columns to obtain $(0,1,1,0,\ldots,0)$ as the second row of the block.
Proceeding this way, it follows that by applying some permutations to the rows and columns of the $i$th diagonal block, the $j$th row of the block, for $j \in [n_i]$, has ones in columns $j$ and $j+1$ of the block and zeros elsewhere (where $n_i+1$ is identified with $1$).
This implies that the $i$th diagonal block of the obtained matrix is $D_{n_i,n_i-2}$ for every $i \in [m]$. Hence, it is a $(2; n_1, \ldots, n_m)$ circulant block diagonal matrix for integers satisfying $n_i \geq 2$ for all $i \in [m]$, and we are done.
\end{proof}

\section{Upper Bounds}\label{sec:upper}

In this section, we present a general method for proving upper bounds on the binary rank of block matrices with diagonal blocks of some specified structure and ones elsewhere.
We then apply this method to the complement of circulant block diagonal matrices and prove Theorem~\ref{thm:upper_new_ki}.

\subsection{The General Method}\label{sec:general_method}

We start with the following definition.
\begin{definition}\label{def:M_t,r}
For integers $t > r \geq 0$, let $\calM_{t,r}$ be the collection of all $0,1$ matrices $M$ of dimensions $n \times n$ for an integer $n \geq 1$ for which there exist sets $A_1, \ldots, A_t \subseteq [n]$ and $B_1, \ldots, B_t \subseteq [n]$ such that
\begin{enumerate}
  \item\label{itm:M1} the combinatorial rectangles $A_1 \times B_1, \ldots, A_t \times B_t$ form a partition of the ones in $M$,
  \item\label{itm:M2} the sets $A_1, \ldots, A_r$ are pairwise disjoint,
  \item\label{itm:M3} there exists a set $L \subseteq [t] \setminus [r]$ for which the sets $B_l$ with $l \in L$ form a partition of $[n]$, and
  \item\label{itm:M4} for every $s \in [r]$, there exists a set $L_s \subseteq [t] \setminus [r]$ for which the sets $B_l$ with $l \in L_s$ form a partition of $[n] \setminus B_s$.
\end{enumerate}
\end{definition}

Our method supplies an upper bound on the binary rank of a block matrix $M$ that has diagonal blocks from the families $\calM_{t,r}$ and ones elsewhere.
Before the formal statement and proof, let us briefly describe the high-level idea of our construction.
For concreteness, suppose that $M$ has $m$ diagonal blocks, and that for every $i \in [m]$, the $i$th diagonal block of $M$ belongs to the family $\calM_{t_i,r_i}$ for the integers $t_i > r_i \geq 0$.

First, it is easy to see that the binary rank of $M$ is at most $\sum_{i=1}^{m}{t_i}$.
Indeed, for any $i \in [m]$, Item~\ref{itm:M1} of Definition~\ref{def:M_t,r} implies that there exist $t_i$ combinatorial rectangles that form a partition of the ones in the $i$th diagonal block of $M$.
Moreover, Item~\ref{itm:M3} of Definition~\ref{def:M_t,r} guarantees that the set of columns of the $i$th block can be represented as a disjoint union of column sets of these rectangles. This allows us to extend the sets of rows of these $t_i$ rectangles so that they will also cover the ones above and below the $i$th diagonal block. By applying this procedure to all diagonal blocks, we get a partition of the ones in the matrix $M$ into $\sum_{i=1}^{m}{t_i}$ combinatorial rectangles, yielding the claimed bound on its binary rank.

To obtain a more economical partition of the ones in $M$, we use the other properties of the partitions of the diagonal blocks of $M$ given by Definition~\ref{def:M_t,r}.
Specifically, the definition guarantees that among the $t_i$ rectangles of the $i$th diagonal block of $M$, there are $r_i$ special rectangles, those labelled $1, \ldots, r_i$.
We show, roughly speaking, that the special rectangles of the $m$ diagonal blocks can be combined together into fewer rectangles which can be completed to a relatively small partition of the ones in $M$.

To do so, we first consider some set of special rectangles of different diagonal blocks of $M$, and construct a rectangle whose rows and columns consist of all the rows and columns of these rectangles. Next, we consider a set of some other special rectangles of different diagonal blocks of $M$, and again construct a rectangle whose rows and columns consist of the rows and columns of all of them. Proceeding this way, we obtain a collection of merged rectangles, whose number is the largest $r_i$ with $i \in [m]$. We further add to this collection the other $t_i-r_i$ non-special rectangles of every block $i \in [m]$. It turns out that the properties of the rectangles from Definition~\ref{def:M_t,r} allow us to extend the row sets of the non-special rectangles of each block to cover the remaining uncovered ones in its columns without increasing the number of rectangles. This results in a partition of the ones in $M$ into only $\sum_{i=1}^{m}{(t_i-r_i)} + \max_{i \in [m]}{r_i}$ combinatorial rectangles.

We now present the formal statement and the full proof (see Figure~\ref{fig:upperbound} for an illustration).

\begin{theorem}\label{thm:upper_M_t,r}
For an integer $m$, let $t_1, \ldots, t_m$ and $r_1, \ldots, r_m$ be integers satisfying $t_i > r_i \geq 0$ for each $i \in [m]$.
Let $M$ be a block matrix with $m$ diagonal blocks, such that for each $i \in [m]$, the $i$th diagonal block of $M$ belongs to $\calM_{t_i,r_i}$, and all of whose other entries are ones.
Then,
\[\Rbin(M) \leq \sum_{i =1}^{m}{(t_i-r_i)} + \max_{i \in [m]}{r_i}.\]
\end{theorem}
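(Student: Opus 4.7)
The plan is to implement the high-level construction outlined just before the statement. For each $i \in [m]$, let $\{A_l^{(i)} \times B_l^{(i)}\}_{l \in [t_i]}$ be a partition of the ones of the $i$th diagonal block witnessing its membership in $\calM_{t_i,r_i}$, and let $L^{(i)} \subseteq [t_i] \setminus [r_i]$ together with $L_s^{(i)} \subseteq [t_i] \setminus [r_i]$ (for $s \in [r_i]$) be the sets provided by Items~\ref{itm:M3} and~\ref{itm:M4} of Definition~\ref{def:M_t,r}. Writing $r = \max_{i \in [m]} r_i$, I will construct $r + \sum_{i=1}^{m}(t_i-r_i)$ combinatorial rectangles partitioning the ones of $M$: a family of $r$ \emph{merged} rectangles indexed by $s \in [r]$, and, for every pair $(i,l)$ with $i \in [m]$ and $l \in [t_i] \setminus [r_i]$, one \emph{extended} rectangle obtained from $A_l^{(i)} \times B_l^{(i)}$ by adjoining rows from the other blocks.

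For each $s \in [r]$, define the merged rectangle $U_s \times V_s$ by
\[
U_s = \bigcup_{i\,:\, s \leq r_i} A_s^{(i)} \qquad \text{and} \qquad V_s = \bigcup_{i\,:\, s \leq r_i} B_s^{(i)}.
\]
The restriction of $U_s \times V_s$ to the $i$th diagonal block is $A_s^{(i)} \times B_s^{(i)}$ when $s \leq r_i$ and is empty otherwise, and all of its off-diagonal entries are ones of $M$ by assumption, so $U_s \times V_s$ consists entirely of ones. Next, for each pair $(i,l)$ with $l \in [t_i] \setminus [r_i]$, let $\widetilde{A}_l^{(i)}$ consist of $A_l^{(i)}$ together with all rows $u$ from blocks $i' \neq i$ selected by the following rule: if $u \in A_s^{(i')}$ for some $s \in [r_{i'}]$ with $s \leq r_i$, include $u$ in $\widetilde{A}_l^{(i)}$ precisely when $l \in L_s^{(i)}$; in all other cases (either $u$ lies in none of $A_1^{(i')}, \ldots, A_{r_{i'}}^{(i')}$, or $u$ lies in some $A_s^{(i')}$ with $s > r_i$), include $u$ precisely when $l \in L^{(i)}$. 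Since all newly adjoined entries are off-diagonal, $\widetilde{A}_l^{(i)} \times B_l^{(i)}$ is also a rectangle of ones.

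It will then remain to check that each one of $M$ is covered exactly once by this family, and I would do so by case analysis on the position of the one. For a one inside the $i$th diagonal block, the merged rectangles restrict within block $i$ to exactly the sub-rectangles $A_s^{(i)} \times B_s^{(i)}$ with $s \in [r_i]$ (well-defined by the disjointness in Item~\ref{itm:M2}), while the extended rectangles restrict within block $i$ to $A_l^{(i)} \times B_l^{(i)}$ for $l \in [t_i] \setminus [r_i]$, so together they reproduce the partition from Item~\ref{itm:M1}. For an off-diagonal one at $(u,v)$ with $u$ in block $i'$ and $v$ in block $i \neq i'$, I would split according to whether $u$ lies in some $A_s^{(i')}$ with $s \in [r_{i'}]$ and $s \leq r_i$: if so, then $(u,v)$ is covered by the merged rectangle $U_s \times V_s$ when $v \in B_s^{(i)}$ and otherwise by the unique extended rectangle indexed by the $l \in L_s^{(i)}$ with $v \in B_l^{(i)}$; in every other case the one is covered by the unique extended rectangle indexed by the $l \in L^{(i)}$ with $v \in B_l^{(i)}$. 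The main obstacle will be precisely this bookkeeping: avoiding double coverage at the ``boundary'' $v \in B_s^{(i)}$ between a merged and an extended rectangle hinges on the fact that $\{B_l^{(i)} : l \in L_s^{(i)}\}$ partitions $[n_i] \setminus B_s^{(i)}$ rather than all of $[n_i]$, as guaranteed by Item~\ref{itm:M4}, while coverage of the remaining off-diagonal entries relies on the partition property of $L^{(i)}$ from Item~\ref{itm:M3}.
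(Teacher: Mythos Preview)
Your proposal is correct and follows essentially the same approach as the paper's own proof: the merged rectangles $U_s \times V_s$ coincide with the paper's $P_s$, and your extended rectangles $\widetilde{A}_l^{(i)} \times B_l^{(i)}$ match the paper's extension of the non-special rectangles to absorb the off-diagonal ones via Items~\ref{itm:M3} and~\ref{itm:M4}. Your case analysis is in fact slightly more explicit than the paper's (you spell out the sub-case $u \in A_s^{(i')}$ with $r_i < s \le r_{i'}$, which the paper folds into ``uncovered at all so far''), but the argument is the same.
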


\begin{proof}
For each $i \in [m]$, consider the $i$th diagonal block $M^{(i)}$ of $M$, and let $n_i$ denote the number of its rows (or columns). Put $n = \sum_{i \in [m]}{n_i}$. By assumption, the matrix $M^{(i)}$ belongs to $\calM_{t_i,r_i}$.
Considering the indices of $\{i\} \times [n_i]$ for the rows and columns of $M^{(i)}$, let $A^{(i)}_l, B^{(i)}_l \subseteq \{i\} \times [n_i]$ for $l \in [t_i]$ be the sets that satisfy the conditions given in Definition~\ref{def:M_t,r} with respect to $M^{(i)}$.
It follows, using Item~\ref{itm:M1} of Definition~\ref{def:M_t,r}, that the rectangles $R^{(i)}_l = A^{(i)}_l \times B^{(i)}_l$ for all $i \in [m]$ and $l \in [t_i]$ do not overlap and cover the ones of the diagonal blocks of $M$.
Assume without loss of generality that $r_1 = \max_{i \in [m]}{r_i}$.

The matrix $M$ can be viewed as a concatenation of $m$ sub-matrices $Q_1, \ldots, Q_{m}$, where $Q_i$ is the $n \times n_i$ matrix that consists of the columns of $M$ that correspond to the $i$th block.
Note that $Q_i$ consists of the matrix $M^{(i)}$ in the rows of $\{i\} \times [n_i]$ and ones elsewhere, and that its columns are indexed by $\{i\} \times [n_i]$.

We turn to describe a partition of the ones in $M$ into combinatorial rectangles.
We first define a collection of $r_1$ rectangles based on the rectangles $R^{(i)}_s$ with $i \in [m]$ and $s \in [r_i]$.
To do so, for every $s \in [r_1]$, consider the set $F_s$ of all blocks $i \in [m]$ whose $s$th rectangle $R^{(i)}_s$ exists and is among the first $r_i$ rectangles of the block, namely, $F_s = \{ i \in [m] \mid s \leq r_i \}$.
For every $s \in [r_1]$, we define the rectangle
\[P_s = \Big ( \cup_{i \in F_s}{A^{(i)}_s} \Big ) \times \Big ( \cup_{j \in F_s}{B^{(j)}_s} \Big ),\]
and we let $P$ denote the collection of these $r_1$ rectangles.
We claim that for every $s \in [r_1]$, $P_s$ is a rectangle of ones in $M$.
Indeed, for every $i, j \in F_s$, the rectangle $P_s$ covers in block $(i,j)$ of $M$ the ones covered by the rectangle $A^{(i)}_s \times B^{(j)}_s$.
For $i=j$, this rectangle belongs to the given partition of the ones in $M^{(i)}$, and for $i \neq j$, all the entries of block $(i,j)$ in $M$ are ones, hence for both cases, the entries of $P_s$ have ones in $M$.
We further claim that the rectangles of $P$ do not overlap. This indeed follows from Item~\ref{itm:M2} of Definition~\ref{def:M_t,r} that guarantees that their sets of rows are pairwise disjoint.

We turn now to cover the remaining uncovered ones in $M$.
To do so, fix some $i \in [m]$, and let us describe a collection of rectangles that form a partition of the remaining uncovered ones in the sub-matrix $Q_i$.
First, we consider the $t_i-r_i$ rectangles $R^{(i)}_l$ for $l \in [t_i] \setminus [r_i]$ whose support lies on the $i$th diagonal block of $M$. Recall that these rectangles, together with the rectangles $R^{(i)}_s$ for $s \in [r_i]$ that are contained in the rectangles of $P$, form by Item~\ref{itm:M1} of Definition~\ref{def:M_t,r} a partition of the ones in the rows of $\{i\} \times [n_i]$ in $Q_i$.
Each of the other rows of $Q_i$, whose indices do not belong to $\{i\} \times [n_i]$, is either uncovered at all so far, or is covered by the rectangles of $P$ at the columns of a unique set $B^{(i)}_s$ with $s \in [r_i]$, where the uniqueness follows from Item~\ref{itm:M2} of Definition~\ref{def:M_t,r}.
By Item~\ref{itm:M3} of Definition~\ref{def:M_t,r}, the set of columns $\{i \} \times [n_i]$ of $Q_i$ is a disjoint union of certain column sets $B^{(i)}_l$ with $l \in [t_i] \setminus [r_i]$. This implies that the all-one rows in $Q_i$ that are not covered at all so far, can be covered by adding them to the row sets of the corresponding rectangles $R^{(i)}_l$ with $l \in [t_i] \setminus [r_i]$.
Similarly, by Item~\ref{itm:M4} of Definition~\ref{def:M_t,r}, for every $s \in [r_i]$ it holds that the set of columns $(\{i \} \times [n_i]) \setminus B^{(i)}_s$ of $Q_i$ is a disjoint union of certain column sets $B^{(i)}_l$ with $l \in [t_i] \setminus [r_i]$. Therefore, the sets of rows of the rectangles $R^{(i)}_l$ with $l \in [t_i] \setminus [r_i]$ can be extended to cover the remaining uncovered ones in $Q_i$ as well.

By applying the construction described above to all diagonal blocks $i \in [m]$ and by combining the produced rectangles with those of $P$, we obtain a collection of $\sum_{i =1}^{m}{(t_i-r_i)} + r_1$ rectangles that form a partition of the ones in $M$. This yields the desired upper bound on the binary rank of $M$, so we are done.
\end{proof}

\newcolumntype{?}{!{\vrule width 2pt}}
\newcolumntype{d}{>{\hfil$}p{.3cm}<{$\hfil}}

\begin{figure}[htb!]
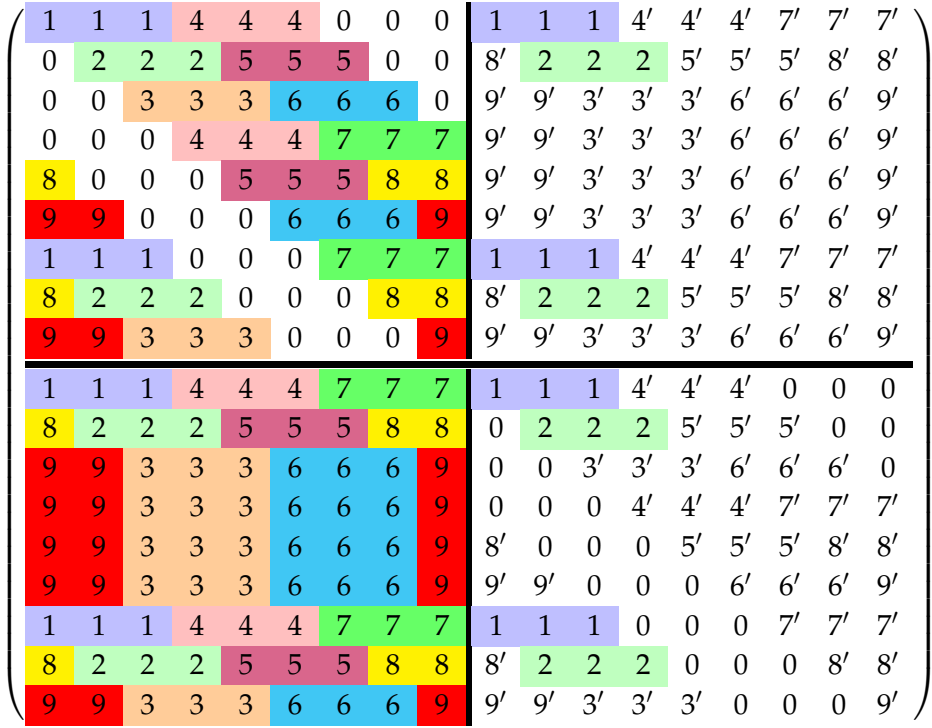

\captionsetup{width=0.9\textwidth}
$$
\begin{array}{ccc} &
\left( \begin{array}{ddddddddd?  ddddddddd}
\cellcolor{blue!25} 1 & \cellcolor{blue!25} 1  & \cellcolor{blue!25} 1  & \cellcolor{red!25}4 &  \cellcolor{red!25}4 &  \cellcolor{red!25}4 & 0 & 0 & 0 &  \cellcolor{blue!25} 1 & \cellcolor{blue!25} 1  & \cellcolor{blue!25} 1  & 4' & 4' & 4' & 7' & 7' &  7' \\
0 &  \cellcolor{green!25} 2 & \cellcolor{green!25} 2 & \cellcolor{green!25} 2 &\cellcolor{purple!60} 5 & \cellcolor{purple!60} 5 & \cellcolor{purple!60} 5 & 0 & 0 & 8' &  \cellcolor{green!25} 2 & \cellcolor{green!25} 2 & \cellcolor{green!25} 2 & 5' & 5' & 5' & 8' & 8'  \\
0 & 0 & \cellcolor{orange!40}3 & \cellcolor{orange!40}3 & \cellcolor{orange!40}3 & \cellcolor{cyan!60} 6 & \cellcolor{cyan!60} 6 & \cellcolor{cyan!60} 6 & 0 & 9' & 9' & 3' & 3' & 3' & 6' & 6' & 6' & 9' \\
0 & 0 & 0 & \cellcolor{red!25}4 &  \cellcolor{red!25}4 &  \cellcolor{red!25}4 & \cellcolor{green!60}7 & \cellcolor{green!60}7 & \cellcolor{green!60}7 & 9' & 9' & 3' & 3' & 3' & 6' & 6' & 6' & 9'  \\
\cellcolor{yellow}8 & 0 & 0 & 0 & \cellcolor{purple!60} 5 & \cellcolor{purple!60} 5 & \cellcolor{purple!60} 5 & \cellcolor{yellow}8 & \cellcolor{yellow}8 & 9' & 9' & 3' & 3' & 3' & 6' & 6' & 6' & 9' \\
\cellcolor{red} 9 &  \cellcolor{red} 9 & 0 & 0 & 0 & \cellcolor{cyan!60} 6 & \cellcolor{cyan!60} 6 & \cellcolor{cyan!60} 6 &  \cellcolor{red} 9 & 9' & 9' & 3' & 3' & 3' & 6' & 6' & 6' & 9'  \\
\cellcolor{blue!25} 1 & \cellcolor{blue!25} 1  & \cellcolor{blue!25} 1  & 0 & 0 & 0 & \cellcolor{green!60}7 & \cellcolor{green!60}7 & \cellcolor{green!60}7 & \cellcolor{blue!25} 1 & \cellcolor{blue!25} 1  & \cellcolor{blue!25} 1  & 4' & 4' & 4' & 7' & 7' & 7' \\
\cellcolor{yellow}8 &  \cellcolor{green!25} 2 & \cellcolor{green!25} 2 & \cellcolor{green!25} 2 & 0 & 0 & 0 & \cellcolor{yellow}8 & \cellcolor{yellow}8 & 8' & \cellcolor{green!25} 2 & \cellcolor{green!25} 2 & \cellcolor{green!25} 2 & 5' & 5' & 5' & 8' & 8'  \\
\cellcolor{red} 9 & \cellcolor{red} 9 & \cellcolor{orange!40}3 & \cellcolor{orange!40}3 & \cellcolor{orange!40}3  & 0 & 0 & 0 & \cellcolor{red} 9 & 9' & 9' & 3' & 3' & 3' & 6' & 6' & 6' & 9'  \\
\specialrule{.2em}{.1em}{.1em}
\cellcolor{blue!25} 1 & \cellcolor{blue!25} 1  & \cellcolor{blue!25} 1  & \cellcolor{red!25}4 &  \cellcolor{red!25}4 &  \cellcolor{red!25}4 & \cellcolor{green!60}7 & \cellcolor{green!60}7 & \cellcolor{green!60}7 & \cellcolor{blue!25} 1 & \cellcolor{blue!25} 1  & \cellcolor{blue!25} 1  & 4' & 4' & 4' & 0 & 0 & 0   \\
\cellcolor{yellow}8 &  \cellcolor{green!25} 2 & \cellcolor{green!25} 2 & \cellcolor{green!25} 2 & \cellcolor{purple!60} 5 & \cellcolor{purple!60} 5 & \cellcolor{purple!60} 5 & \cellcolor{yellow}8 & \cellcolor{yellow}8 &  0 &  \cellcolor{green!25} 2 & \cellcolor{green!25} 2 & \cellcolor{green!25} 2 & 5' & 5' & 5' & 0 & 0   \\
\cellcolor{red} 9 & \cellcolor{red} 9  & \cellcolor{orange!40}3 & \cellcolor{orange!40}3 & \cellcolor{orange!40}3  & \cellcolor{cyan!60} 6 & \cellcolor{cyan!60} 6 & \cellcolor{cyan!60} 6 & \cellcolor{red} 9 & 0 & 0 & 3' & 3' & 3' & 6' & 6' & 6' & 0  \\
\cellcolor{red} 9 & \cellcolor{red} 9  & \cellcolor{orange!40}3 & \cellcolor{orange!40}3 & \cellcolor{orange!40}3  & \cellcolor{cyan!60} 6 & \cellcolor{cyan!60} 6 & \cellcolor{cyan!60} 6 & \cellcolor{red} 9 &0 & 0 & 0 & 4' & 4' & 4' & 7' & 7' & 7'   \\
\cellcolor{red} 9 & \cellcolor{red} 9  & \cellcolor{orange!40}3 & \cellcolor{orange!40}3 & \cellcolor{orange!40}3  & \cellcolor{cyan!60} 6 & \cellcolor{cyan!60} 6 & \cellcolor{cyan!60} 6 & \cellcolor{red} 9 &  8' & 0 & 0 & 0 & 5' & 5' & 5' & 8' & 8'  \\
\cellcolor{red} 9 & \cellcolor{red} 9  & \cellcolor{orange!40}3 & \cellcolor{orange!40}3 & \cellcolor{orange!40}3  & \cellcolor{cyan!60} 6 & \cellcolor{cyan!60} 6 & \cellcolor{cyan!60} 6& \cellcolor{red} 9 & 9' & 9' & 0 & 0 & 0 & 6' & 6' & 6' & 9'   \\
\cellcolor{blue!25} 1 & \cellcolor{blue!25} 1  & \cellcolor{blue!25} 1  & \cellcolor{red!25}4 &  \cellcolor{red!25}4 &  \cellcolor{red!25}4 & \cellcolor{green!60}7 & \cellcolor{green!60}7 & \cellcolor{green!60}7 & \cellcolor{blue!25} 1 & \cellcolor{blue!25} 1  & \cellcolor{blue!25} 1  & 0 & 0 & 0 & 7' & 7' & 7'  \\
\cellcolor{yellow}8 &  \cellcolor{green!25} 2 & \cellcolor{green!25} 2 & \cellcolor{green!25} 2 & \cellcolor{purple!60} 5 & \cellcolor{purple!60} 5 & \cellcolor{purple!60} 5 & \cellcolor{yellow}8 & \cellcolor{yellow}8 & 8' &  \cellcolor{green!25} 2 & \cellcolor{green!25} 2 & \cellcolor{green!25} 2 & 0 & 0 & 0 & 8' & 8'  \\
\cellcolor{red} 9 & \cellcolor{red} 9 & \cellcolor{orange!40}3 & \cellcolor{orange!40}3 & \cellcolor{orange!40}3  & \cellcolor{cyan!60} 6 & \cellcolor{cyan!60} 6 & \cellcolor{cyan!60} 6 & \cellcolor{red} 9 & 9' & 9' & 3' & 3' & 3' & 0 & 0 & 0 & 9'  \\
 \end{array}
 \right)
 \end{array}
$$
\caption{An illustration for the proof of Theorem~\ref{thm:upper_M_t,r}.
Consider the matrix that has two diagonal blocks that are equal to $D_{9,3}$ and ones elsewhere.
The matrix $D_{9,3}$ belongs to the family $\calM_{9,2}$. This follows from the choice of sets $A_l = \{l, l+6\}$ and $B_l = \{l, l+1, l+2\}$ for $l \in [9]$, where the elements are considered modulo $9$ in $[9]$ (see Lemma~\ref{lemma:DinM}).
In the partition of the ones described in the figure, the rectangles $A_1 \times B_1$ of the two diagonal blocks are merged into a single rectangle (labelled $1$), and the rectangles $A_2 \times B_2$ of the two diagonal blocks are merged into a single rectangle (labelled $2$). The extensions of the other rectangles $A_l \times B_l$ for $l \in [9] \setminus [2]$, as defined in the proof of Theorem~\ref{thm:upper_M_t,r}, are labelled in the figure by $l$ and $l'$ in the left and right blocks respectively.
It follows that the binary rank of the matrix is at most $16$.}
\label{fig:upperbound}	
\end{figure}

\subsection{The Complement of Circulant Block Diagonal Matrices}

The following lemma relates the matrices $D_{n,k}$ to the families $\calM_{t,r}$ (see Definition~\ref{def:M_t,r} and Figure~\ref{fig:upperbound}).

\begin{lemma}\label{lemma:DinM}
Let $n \geq 1$ be an integer.
\begin{enumerate}
  \item The matrices $D_{n,0}$ and $D_{n,n}$ belong to $\calM_{1,0}$.
  \item For every integer $k$ such that $n>k>0$, the matrix $D_{n,k}$ belongs to $\calM_{n,d-1}$ for $d = \gcd(n,k)$.
\end{enumerate}
\end{lemma}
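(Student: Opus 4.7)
My plan is to directly exhibit, for part~(2), explicit witnessing sets $A_l, B_l \subseteq [n]$ for $l \in [n]$, generalizing the construction illustrated in Figure~\ref{fig:upperbound}. Part~(1) is immediate: $D_{n,0}$ is the all-ones matrix, handled by $t=1$, $A_1 = B_1 = [n]$; and $D_{n,n}$ is the all-zeros matrix, handled by $t=1$, $A_1 = \emptyset$, $B_1 = [n]$ (so that $A_1 \times B_1$ trivially partitions the empty set of ones). In both cases Items~2 and~4 are vacuous since $r=0$, and Item~3 holds with $L = \{1\}$.

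For part~(2), where $n > k > 0$ and $d = \gcd(n,k)$ divides $n-k$, I will define
\[B_l = \{l, l+1, \ldots, l+d-1\}, \qquad A_l = \{l - jd : j = 0, 1, \ldots, (n-k)/d - 1\},\]
taken modulo $n$ with representatives in $[n]$. For Item~1, I verify that $A_l \times B_l$ is a rectangle of ones: for $i = l - qd \in A_l$ and $j = l + r \in B_l$, the shift $j - i = qd + r$ lies in $\{0, 1, \ldots, n-k-1\}$. Conversely, any one $(i, j)$ determines a unique diagonal index $\alpha = (j - i) \bmod n \in \{0, \ldots, n-k-1\}$, and writing $\alpha = qd + r$ with $0 \le r < d$ pins down the unique $l = i + qd$ for which $(i, j) \in A_l \times B_l$. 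Item~2 follows at once because every element of $A_l$ is congruent to $l$ modulo $d$, so distinct $l \in [d-1]$ yield disjoint $A_l$'s.

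For Item~3, I take $L = \{d, 2d, \ldots, n\} \subseteq [n] \setminus [d-1]$, using that $d \mid n$; the sets $B_{d}, B_{2d}, \ldots, B_{n}$ are the consecutive length-$d$ blocks tiling $[n]$, with $B_n$ wrapping around to absorb $\{1, \ldots, d-1\}$. For Item~4, given $s \in [d-1]$, I take $L_s = \{s+d, s+2d, \ldots, s+n-d\}$, the analogous shifted partition whose $B_l$'s tile $[n] \setminus B_s$. The one point requiring brief justification is that every $s + jd$ with $1 \le j \le n/d - 1$ lies in $\{d+1, \ldots, n-1\} \subseteq [n] \setminus [d-1]$; this reduces to $d \le n/2$, which holds because $d$ is a proper divisor of $n$ (since $d \mid n$ and $d \le k < n$).

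The only real bookkeeping obstacle is keeping the arithmetic modulo $n$ consistent in the index sets (in particular the wrap-around in $B_n$ and possibly in some of the $B_{s+jd}$ in $L_s$) and checking that all chosen indices avoid the reserved range $[d-1]$. Once these details are handled, the four items of Definition~\ref{def:M_t,r} follow essentially from the arithmetic identities above.
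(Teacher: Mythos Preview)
Your proof is correct and follows essentially the same approach as the paper: the sets $B_l$ are identical, and your explicit formula $A_l = \{l - jd : 0 \le j \le (n-k)/d - 1\}$ coincides with the paper's implicit description of $A_l$ as the set of rows whose support decomposition uses $B_l$. Your choice for $D_{n,n}$ of $A_1 = \emptyset$, $B_1 = [n]$ is in fact cleaner than the paper's $A_1 = [n]$, $B_1 = \emptyset$, since only the former satisfies Item~3 of Definition~\ref{def:M_t,r}.
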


\begin{proof}
It is easy to see that $D_{n,0} \in \calM_{1,0}$, as follows from the choice of the sets $A_1 = B_1 = [n]$ which satisfy the conditions of Definition~\ref{def:M_t,r} (in a somewhat trivial manner). Similarly, $D_{n,n} \in \calM_{1,0}$, as follows from the choice of the sets $A_1 = [n]$ and $B_1 = \emptyset$.

For the second item of the lemma, let $n$ and $k$ be integers satisfying $n>k>0$, and put $d= \gcd(n,k)$.
We turn to show that the matrix $D_{n,k}$ belongs to $\calM_{n,d-1}$.
To do so, consider the $n$ subsets of $[n]$ that include $d$ cyclically consecutive elements, that is, the sets
\[B_l = \{l,l+1,\ldots,l+d-1\}\]
for $l \in [n]$, where the elements are considered modulo $n$ with their representatives in $[n]$.
Observe that for every $i \in [n]$, the support of row $i$ in $D_{n,k}$ can be represented as a disjoint union of $(n-k)/d$ sets $B_l$ with $l = i~(\mod~d)$.
For every $l \in [n]$, let $A_l$ denote the set of rows $i \in [n]$ of $D_{n,k}$ for which this representation of row $i$ includes the set $B_l$.

We claim that the sets $A_l$ and $B_l$ for $l \in [n]$ satisfy the conditions of Definition~\ref{def:M_t,r}.
For Item~\ref{itm:M1}, it follows from the definition of the sets that the rectangles $A_l \times B_l$ for $l \in [n]$ form a partition of the ones in $D_{n,k}$.
For Item~\ref{itm:M2}, the definition of the sets implies that for every $l \in [n]$, the set $A_l$ includes only elements that are congruent to $l$ modulo $d$, so in particular, the sets $A_1, \ldots, A_{d-1}$ are pairwise disjoint.
For Item~\ref{itm:M3}, observe that the sets $B_l$ with $l = d~(\mod~ d)$, whose indices $l$ clearly satisfy $l \in [n] \setminus [d-1]$, form a partition of $[n]$.
Finally, for Item~\ref{itm:M4}, fix some $s \in [d-1]$, and observe that the sets $B_l$ with $l = s~(\mod~ d)$ and $l \neq s$, whose indices $l$ again satisfy $l \in [n] \setminus [d-1]$, form a partition of $[n] \setminus B_s$.
It therefore follows that $D_{n,k} \in \calM_{n,d-1}$, and we are done.
\end{proof}

Equipped with Theorem~\ref{thm:upper_M_t,r} and Lemma~\ref{lemma:DinM}, we restate and prove Theorem~\ref{thm:upper_new_ki}.

\newtheorem*{thma}{Theorem \ref{thm:upper_new_ki}}
\begin{thma}
Let $M$ be the complement of a $(k_1, \ldots, k_m; n_1, \ldots, n_m)$ circulant block diagonal matrix for integers satisfying $n_i \geq k_i > 0$ for all $i \in [m]$.
For every $i \in [m]$, put $d_i = \gcd(n_i,k_i)$ if $n_i >k_i$, and $d_i = 1$ if $n_i=k_i$.
Then,
\[\Rbin(M) \leq \Rreal(M)+\max_{i \in [m]} d_i-1.\]
\end{thma}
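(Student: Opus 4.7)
The plan is to derive the bound as a direct application of the general block-matrix machinery developed in Section~\ref{sec:general_method}. Specifically, I would combine Theorem~\ref{thm:upper_M_t,r} with Lemma~\ref{lemma:DinM} via a three-step reduction.

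First, I would express $M$ in the form required by Theorem~\ref{thm:upper_M_t,r}, namely as a block matrix with $m$ diagonal blocks and ones in every off-diagonal entry. This is immediate: the $(k_1,\ldots,k_m;n_1,\ldots,n_m)$ circulant block diagonal matrix whose complement is $M$ has zeros off the diagonal, so $M$ has ones there; and its $i$th diagonal block is the complement of $D_{n_i,n_i-k_i}$. Being circulant with $n_i-k_i$ ones per row, this complement coincides with $D_{n_i,k_i}$ up to a cyclic permutation of its columns when $n_i>k_i$, and with the all-zero matrix $D_{n_i,n_i}$ when $n_i=k_i$.

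Second, I would place each diagonal block into the appropriate family $\calM_{t_i,r_i}$ via Lemma~\ref{lemma:DinM}. The key observation is that membership in $\calM_{t,r}$ is invariant under permutations of rows and columns, since one may simply permute the index sets $A_l,B_l$ accordingly. Hence in the case $n_i>k_i$, Lemma~\ref{lemma:DinM} gives that the $i$th diagonal block lies in $\calM_{n_i,d_i-1}$, where $d_i=\gcd(n_i,k_i)$; and in the case $n_i=k_i$, the $i$th diagonal block is $D_{n_i,n_i}\in\calM_{1,0}=\calM_{1,d_i-1}$, consistent with the convention $d_i=1$.

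Third, I would apply Theorem~\ref{thm:upper_M_t,r} with $t_i=n_i$, $r_i=d_i-1$ when $n_i>k_i$, and $t_i=1$, $r_i=0$ when $n_i=k_i$, obtaining
\[
\Rbin(M) \leq \sum_{i=1}^{m}(t_i-r_i) + \max_{i\in[m]} r_i.
\]
A short case check gives $t_i-r_i = n_i-\gcd(n_i,k_i)+1$ in both cases (in the degenerate case $n_i=k_i$, both sides equal $1$). Lemma~\ref{lemma:rank_M_comp} then identifies $\sum_{i=1}^{m}(t_i-r_i)$ with $\Rreal(M)$, while the definition of $d_i$ gives $\max_i r_i = \max_i d_i - 1$. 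Combining these yields exactly the claimed inequality. The trivial degenerate case in which $M$ is the all-zero matrix (arising when $m=1$ and $n_1=k_1$) can be verified separately.

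I do not expect a substantive obstacle: all the hard combinatorial work is already packaged inside Theorem~\ref{thm:upper_M_t,r} and Lemma~\ref{lemma:DinM}. The only bookkeeping that requires care is the case distinction $n_i=k_i$ versus $n_i>k_i$, where one must check that the definition $d_i=1$ (instead of $d_i=\gcd(n_i,k_i)=n_i$) correctly reconciles the per-block contribution $t_i-r_i$ with the per-block contribution $n_i-\gcd(n_i,k_i)+1$ to $\Rreal(M)$ supplied by Lemma~\ref{lemma:rank_M_comp}.
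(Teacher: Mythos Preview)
Your proposal is correct and follows essentially the same route as the paper: identify each diagonal block as lying in the appropriate $\calM_{t_i,r_i}$ via Lemma~\ref{lemma:DinM}, apply Theorem~\ref{thm:upper_M_t,r}, and match the resulting sum to $\Rreal(M)$ using Lemma~\ref{lemma:rank_M_comp}, with the $n_i=k_i$ case and the all-zero matrix handled separately. Your explicit remark that the complement of $D_{n_i,n_i-k_i}$ agrees with $D_{n_i,k_i}$ only up to a cyclic column permutation, together with the observation that membership in $\calM_{t,r}$ is permutation-invariant, is a detail the paper leaves implicit but is correct.
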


\begin{proof}
It may be assumed that the matrix $M$ is nonzero, as otherwise the statement of the theorem trivially holds.
Using the assumption $n_i \geq k_i > 0$ for all $i \in [m]$, we can apply Lemma~\ref{lemma:rank_M_comp} to the complement of $M$ to obtain that
\begin{eqnarray}\label{eq:rank_vs_comp}
\Rreal(\overline{M}) = \Rreal(M) = \sum_{i=1}^{m}{(n_i - \gcd(n_i,k_i)+1)}.
\end{eqnarray}

Put $I = \{ i \in [m] \mid n_i = k_i\}$.
By definition, we have $d_i=1$ for each $i \in I$, and $d_i = \gcd(n_i,k_i)$ for each $i \in [m] \setminus I$.
It follows from~\eqref{eq:rank_vs_comp} that
\[ \Rreal(M) = \sum_{i \in I}{(n_i - \gcd(n_i,k_i)+1)} + \sum_{i \in [m] \setminus I}{(n_i - \gcd(n_i,k_i)+1)} = |I| + \sum_{i \in [m] \setminus I}{(n_i-d_i+1)}.\]
By Lemma~\ref{lemma:DinM}, the $i$th diagonal block of $M$, which is the complement of $D_{n_i,n_i-k_i}$, belongs to $\calM_{1,0}$ for $i \in I$ and to $\calM_{n_i,d_i-1}$ for $i \in [m] \setminus I$.
By Theorem~\ref{thm:upper_M_t,r}, this implies that
\[\Rbin(M) \leq  |I| + \sum_{i \in [m] \setminus I}{(n_i-d_i+1)} + \max_{i \in [m]}{(d_i-1)} = \Rreal(M)+\max_{i \in [m]} d_i-1,\]
and the result follows.
\end{proof}

We next prove Theorem~\ref{thm:gap_kIntro}.

\begin{proof}[ of Theorem~\ref{thm:gap_kIntro}]
Write $r = 2k \cdot \ell +t$ for integers $\ell \geq 1$ and $0 \leq t \leq 2k-1$, and define $n = 2k \cdot \ell + k \cdot t$.
Let $M$ be the $n \times n$ block diagonal matrix that consists of $\ell$ diagonal blocks that are equal to $D_{2k,k}$ and $t$ diagonal blocks that are equal to $D_{k,0}$. Equivalently, $M$ is the $(k; 2k,\ldots,2k,k,\ldots,k)$ circulant block diagonal matrix, where $2k$ and $k$ appear, respectively, $\ell$ and $t$ times.
By Lemma~\ref{lemma:DinM}, it holds that $\overline{D_{2k,k}} \in \calM_{2k,k-1}$ and $\overline{D_{k,0}} \in \calM_{1,0}$.
Applying Theorem~\ref{thm:upper_M_t,r} to $\overline{M}$, we obtain that
\begin{eqnarray*}
\Rbin(\overline{M}) &\leq& (k+1) \cdot \ell+t +(k-1) \\
&=& (k+1) \cdot \ell+ \Big \lceil \frac{k+1}{2k} \cdot t\Big \rceil + t-\Big \lceil \frac{k+1}{2k} \cdot t\Big \rceil + (k-1) \\
& = & \Big \lceil \frac{k+1}{2k} \cdot r\Big \rceil + \Big \lfloor \frac{k-1}{2k} \cdot t\Big \rfloor + (k-1) \\
&\leq& \Big \lceil \frac{k+1}{2k} \cdot r\Big \rceil + (2k-3),
\end{eqnarray*}
where the last inequality holds using $t \leq 2k-1$.
This completes the proof.
\end{proof}

We end this section with the proof of Theorem~\ref{thm:2-regIntro}, which determines, almost exactly, the binary rank of $2$-regular matrices and of their complement.

\begin{proof}[ of Theorem~\ref{thm:2-regIntro}]
Let $M$ be a $2$-regular $0,1$ matrix of dimensions $n \times n$.
By Lemma~\ref{lemma:2-reg-structure}, $M$ is equal, up to permutations of rows and columns, to a $(2;n_1, \ldots,n_m)$ circulant block diagonal matrix for some integers $m \geq 1$ and $n_1,\ldots,n_m \geq 2$. Note that $n = \sum_{i=1}^{m}{n_i}$. Let $m_2$ denote the number of indices $i \in [m]$ with $n_i=2$, and let $m_{even}$ denote the number of indices $i \in [m]$ for which $n_i$ is even.

Suppose that $M$ is not the all-one $2 \times 2$ matrix.
By Lemma~\ref{lemma:2-reg-rankR}, it holds that
\begin{eqnarray}\label{eq:rank_M2-reg}
\Rreal(M) = \Rreal(\overline{M}) = n - m_{even}.
\end{eqnarray}
Since $M$ is a block diagonal matrix, its binary rank is equal to the sum of the binary rank of its diagonal blocks.
It clearly holds that $\Rbin(D_{2,0}) = 1$, and Lemma~\ref{lemma:isolation} implies that for every $i \in [m]$ with $n_i >2$, it holds that $\Rbin(D_{n_i,n_i-2}) = n_i$.
This implies that
\[ \Rbin(M) = \sum_{i=1}^{m}{\Rbin(D_{n_i,n_i-2})} = n-m_2.\]

As for the complement of $M$, the real rank forms a lower bound on its binary rank. Hence, it follows from~\eqref{eq:rank_M2-reg} that
\[\Rbin(\overline{M}) \geq \Rreal(\overline{M}) = n - m_{even}.\]
On the other hand, Theorem~\ref{thm:upper_new_ki} combined with~\eqref{eq:rank_M2-reg} implies that
\[ \Rbin(\overline{M}) \leq \Rreal(\overline{M}) + 1 = n-m_{even}+1.\]
It thus follows that $\Rbin(\overline{M}) \in \{n-m_{even}, n-m_{even}+1\}$, as required.
\end{proof}

\section{Lower Bounds}\label{sec:lower}

In this section, we prove for several families of circulant block diagonal matrices that the binary rank of their complement is strictly larger than the real rank.
We start with some lemmas that will be used later to prove the lower bounds on the binary rank.

\subsection{Lemmas}

Consider the following definition.
\begin{definition}[Row and Column Sequences]
Let $M$ be the complement of a $(k_1, \ldots, k_m ; n_1, \ldots, n_m)$ circulant block diagonal matrix for integers satisfying $n_i \geq k_i \geq 0$, and let $R = A \times B$ be a rectangle of ones in $M$.
For any $i \in [m]$, put $d_i = \gcd(n_i,k_i)$.
The {\em row sequence of $R$ in block $i$} is defined as the sequence of integers $(a_1, \ldots, a_{d_i})$, where for each $t \in [d_i]$, $a_t$ is the number of rows of $R$ in the $i$th diagonal block of $M$ whose indices in the block are congruent to $t$ modulo $d_i$, i.e., the number of rows $(i,j) \in A$ of $R$ with $j = t~(\mod~d_i)$.
If $a_1 = a_2 =\cdots = a_{d_i}$, then we say that $R$ is {\em balanced in rows in block $i$}.
Similarly, the {\em column sequence of $R$ in block $i$} is defined as the sequence of integers $(b_1, \ldots, b_{d_i})$, where for each $t \in [d_i]$, $b_t$ is the number of columns of $R$ in the $i$th diagonal block of $M$ whose indices in the block are congruent to $t$ modulo $d_i$, i.e., the number of columns $(i,j) \in B$ of $R$ with $j = t~(\mod~d_i)$.
If $b_1 = b_2 = \cdots = b_{d_i}$, then we say that $R$ is {\em balanced in columns in block $i$}.
\end{definition}

The following lemma relates the size of a partition of the ones in the complement of a circulant block diagonal matrix to the row and column sequences of its rectangles.
We will use it to show that if the partition includes a rectangle that is not balanced in rows or in columns in some block then its size is larger than the real rank of the matrix (see Corollary~\ref{cor:balanced+1}).

\begin{lemma}\label{lemma:lin_ind+1}
Let $M$ be the complement of a $(k_1, \ldots, k_m ; n_1, \ldots, n_m)$ circulant block diagonal matrix for integers satisfying $n_i \geq k_i \geq 0$ for all $i \in [m]$, and let $P$ be a partition of the ones in $M$ into combinatorial rectangles.
For some $h \in [m]$, put $d_{h} = \gcd(n_{h},k_{h})$.
Suppose that $P$ includes $\ell$ rectangles whose row (or column) sequences in block $h$ are $a^{(1)}, \ldots, a^{(\ell)} \in \Z^{d_{h}}$, and let $a^{(0)}$ denote the all-one vector in $\Z^{d_{h}}$. If the vectors of $\{ a^{(0)}, a^{(1)}, \ldots, a^{(\ell)}\}$ are linearly independent, then
\[|P| \geq \Rreal(M) + \ell.\]
\end{lemma}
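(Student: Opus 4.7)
The plan is to convert the partition into a rank factorization $M = UV$ and to bound $|P|$ from below by producing enough vectors in $\ker U$. For each rectangle $R_j = A_j \times B_j$ I would take $u_j, v_j \in \R^n$ (with $n = \sum_i n_i$) to be the characteristic vectors of $A_j, B_j$, form $U \in \R^{n \times |P|}$ with columns $u_j$ and $V \in \R^{|P| \times n}$ with rows $v_j^T$, and use $M = UV$. Then $\rank(U) \geq \Rreal(M)$, and rank--nullity applied to $U \colon \R^{|P|} \to \R^n$ gives
\[ |P| \;=\; \rank(U) + \dim\ker U \;\geq\; \Rreal(M) + \dim\ker U. \]
The key observation is that $V$ sends $\ker M$ into $\ker U$: if $Mv = 0$ then $U(Vv) = 0$. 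So it suffices to exhibit an $\ell$-dimensional subspace of $\ker M$ whose image under $V$ is $\ell$-dimensional in $\R^{|P|}$.

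I would draw these kernel vectors from block $h$. I treat the column-sequence case; the row-sequence case is the mirror image and uses left kernel vectors of $M$. For each $f \in \R^{d_h}$ with $\sum_{t \in [d_h]} f(t) = 0$, define $w_f \in \R^n$ to be supported on block $h$ with $w_f(h, s) = f(s \bmod d_h)$. To verify $w_f \in \ker M$ I would check two things: on block $h$ itself, $w_f^{(h)}$ is constant modulo $d_h$ with zero sum, and such vectors lie in $\ker M^{(h)}$ by the standard Fourier-diagonalization of the circulant matrix $M^{(h)}$ (the complement of $D_{n_h, n_h - k_h}$) at the $n_h$-th roots of unity; on every other block the contribution is $(n_h/d_h) \sum_t f(t) = 0$, because the off-diagonal blocks of $M$ are all-one.

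It then remains to compute the image of $\{w_f : \sum_t f(t) = 0\}$ under $V$. A direct calculation gives
\[ (Vw_f)_j \;=\; v_j^T w_f \;=\; \sum_{(h, s) \in B_j} f(s \bmod d_h) \;=\; \langle b_j, f \rangle, \]
where $b_j \in \R^{d_h}$ is the column sequence of $R_j$ in block $h$. So the image equals the image of the linear map $f \mapsto (\langle b_j, f\rangle)_{j \in [|P|]}$ restricted to the hyperplane $\mathbf{1}^\perp$, and an elementary dual argument shows this image has dimension $\dim\linspan(\mathbf{1}, b_1, \ldots, b_{|P|}) - 1$. By hypothesis $\{a^{(0)}, a^{(1)}, \ldots, a^{(\ell)}\}$ is linearly independent and each $a^{(p)}$ is among the $b_j$, so $\dim\linspan(\mathbf{1}, b_1, \ldots, b_{|P|}) \geq \ell + 1$ and the image has dimension at least $\ell$, as required.

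The step most in need of care is the circulant kernel inclusion for $M^{(h)}$; everything else is routine rank--nullity bookkeeping. The row-sequence variant of the lemma runs symmetrically: one forms, for $g \in \R^{d_h}$ with $\sum_t g(t) = 0$, the left kernel vector $y_g \in \R^n$ supported on block $h$ with $y_g(h, s) = g(s \bmod d_h)$, verifies $y_g^T M = 0$ by the analogous two-block computation, observes that $y_g^T U$ lies in the left kernel of $V$, and repeats the dimension count to obtain $|P| \geq \rank(V) + \ell \geq \Rreal(M) + \ell$.
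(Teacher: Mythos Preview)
Your argument is correct. The kernel inclusion for $M^{(h)}$ is fine: since $d_h$ divides both $n_h$ and $k_h$, any window of $k_h$ cyclically consecutive entries in block $h$ meets each residue class modulo $d_h$ exactly $k_h/d_h$ times, so $\sum_{s=j}^{j+k_h-1} w_f(h,s)=(k_h/d_h)\sum_t f(t)=0$; combined with $\sum_{s} w_f(h,s)=0$ this gives $M^{(h)}w_f^{(h)}=0$, and the all-one off-diagonal blocks kill the rest. Your duality computation $\dim T(\mathbf{1}^\perp)=\dim\linspan(\mathbf{1},b_1,\ldots,b_{|P|})-1$ is also correct, and the row-sequence variant is indeed the transpose of the same argument.

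The paper takes a different route. Rather than working with the full factorization $M=UV$ and rank--nullity, it removes the $\ell$ distinguished rectangles to form $M'=M-\sum_{s=1}^{\ell}R_s$ and shows directly that $\ker M'\subseteq\ker M$, hence $\Rreal(M')\geq\Rreal(M)$ and $|P|\geq\Rbin(M')+\ell\geq\Rreal(M)+\ell$. The kernel inclusion is obtained by summing, for each residue $t\in[d_h]$, the equations of $M'x=0$ indexed by $(h,j)$ with $j\equiv t\pmod{d_h}$; this yields a $d_h\times(\ell+1)$ linear system in the quantities $\widetilde{X}$ and $y_s=\sum_{(i,j)\in B_s}x_{i,j}$ whose coefficient matrix is exactly $[a^{(0)}\mid a^{(1)}\mid\cdots\mid a^{(\ell)}]$, forcing $y_s=0$ and hence $Mx=0$. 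In effect the paper works on the ``primal'' side (solutions of $M'x=0$) while you work on the ``dual'' side (explicit kernel vectors of $M$ pushed through $V$); the two are mirror images of the same $d_h$-periodic structure in block $h$. Your packaging is arguably cleaner and avoids subtracting rectangles, whereas the paper's explicit manipulation of the equations $M'x=0$ sets up machinery that is reused verbatim in the subsequent, more delicate Lemma~\ref{lemma:divides+1}.
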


\begin{proof}
Let $M$ and $P$ be a matrix and a partition of its ones into combinatorial rectangles as in the statement of the lemma.
Let $R_1, \ldots, R_\ell \in P$ be the $\ell$ rectangles, viewed as $0,1$ matrices, whose row sequences in block $h$ are respectively $a^{(1)}, \ldots, a^{(\ell)} \in \Z^{d_{h}}$ where $d_{h}=\gcd(n_{h},k_{h})$, and suppose that they form together with $a^{(0)}$ a linearly independent set.

Consider the $0,1$ matrix $M' = M -\sum_{s=1}^{\ell}{R_s}$.
We turn to prove that $\Rreal(M') \geq \Rreal(M)$. Since the rectangles of $P$ excluding $R_1, \ldots, R_\ell$ form a partition of the ones in $M'$ into combinatorial rectangles, this will imply that
\[|P| \geq \Rbin(M') + \ell \geq \Rreal(M') + \ell \geq \Rreal(M) + \ell,\]
and will thus complete the proof.

To prove that $\Rreal(M') \geq \Rreal(M)$, consider the system of linear equations
\begin{equation}\label{eq:M'}
M' \cdot x = 0,
\end{equation}
where $x$ is a real vector whose entries are naturally denoted by $x_{i,j}$ for $i \in [m]$ and $j \in [n_i]$, such that $x_{i,j}$ is the $j$th entry of the $i$th block in $x$.
Throughout the proof, the indices $j$ in the pairs $(i,j)$ are considered modulo $n_i$ with their representatives in $[n_i]$.
For every $i \in [m]$, put $X_i = \sum_{j \in [n_i]}{x_{i,j}}$, and let $X = \sum_{i =1}^{m}{X_i}$ denote the sum of all the entries of $x$.
Further, for every $s \in [\ell]$, put $R_s = A_s \times B_s$, and let $y_s = \sum_{(i,j) \in B_s}{x_{i,j}}$ denote the sum of the variables that correspond to the columns of $R_s$.
Notice that for each $i \in [m]$, the first row of the $i$th diagonal block of $M$ starts with $k_i$ zeros followed by $n_i-k_i$ ones.
This implies that the $(i,j)$ equation of~\eqref{eq:M'} can be written as
\[X - (x_{i,j}+x_{i,j+1}+ \cdots + x_{i,j+k_i-1}) = z_{i,j},\]
where $z_{i,j}$ is the sum of the variables that correspond to the columns of the rectangles among $R_1, \ldots, R_\ell$ that include the row $(i,j)$.
Equivalently, $z_{i,j}$ is the sum of $y_s$ over all $s \in [\ell]$ satisfying $(i,j) \in A_s$.

For every $t \in [d_{h}]$, consider the $n_{h}/d_{h}$ equations of~\eqref{eq:M'} indexed by $(h,j)$ with $j = t~(\mod~d_{h})$. Observe that every variable $x_{i,j}$ with $i \neq h$ appears with coefficient $1$ in the left hand side of each of these equations.
For $i = h$, however, the variable $x_{i,j}$ has coefficient $0$ in precisely $k_{h}/d_{h}$ of them and $1$ elsewhere.
Letting $\widetilde{X}$ denote the sum of the left hand side of these $n_{h}/d_{h}$ equations, it follows that
\[\widetilde{X} = \tfrac{n_{h}}{d_{h}} \cdot X- \tfrac{k_{h}}{d_{h}} \cdot X_{h}.\]
Note that $\widetilde{X}$ is independent of $t$.
As for the right hand side, the sum of these equations includes the variables $y_s$ for $s \in [\ell]$, where the coefficient of $y_s$ is the number of rows $(h,j)$ with $j = t~(\mod~d_{h})$ that belong to the set of rows $A_s$ of the rectangle $R_s$. By definition, this coefficient is the $t$th entry of the row sequence $a^{(s)}$ of the rectangle $R_s$.
We thus get, for each $t \in [d_{h}]$, the equation
\[\widetilde{X} = \sum_{s \in [\ell]}{a^{(s)}_t \cdot y_s}.\]
This can be viewed as a system of $d_{h}$ linear equations in the variables $\widetilde{X}$ and $y_s$ for $s \in [\ell]$.
Since the vectors of $\{ a^{(0)}, a^{(1)}, \ldots, a^{(\ell)}\}$ are linearly independent, it follows that $y_s = 0$ for all $s \in [\ell]$, and therefore $z_{i,j}=0$ for all pairs $(i,j)$. This implies that any solution $x$ of the system~\eqref{eq:M'} is also a solution of the system $M \cdot x = 0$. Hence, $\Rreal(M') \geq \Rreal(M)$, and we are done.
\end{proof}

\begin{corollary}\label{cor:balanced+1}
Let $M$ be the complement of a $(k_1, \ldots, k_m ; n_1, \ldots, n_m)$ circulant block diagonal matrix for integers satisfying $n_i \geq k_i \geq 0$ for all $i \in [m]$, and let $P$ be a partition of the ones in $M$ into combinatorial rectangles.
Suppose that $P$ includes a rectangle that is not balanced in rows (or columns) in some block.
Then, $|P| \geq \Rreal(M) + 1$.
\end{corollary}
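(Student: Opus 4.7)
The plan is to derive the corollary as a direct instantiation of Lemma~\ref{lemma:lin_ind+1} in the case $\ell = 1$. Let $R \in P$ be a rectangle which, by assumption, is not balanced in rows (the column case is completely symmetric) in some block $h \in [m]$, and let $a^{(1)} \in \Z^{d_h}$ denote its row sequence in block $h$, where $d_h = \gcd(n_h, k_h)$. By the definition of being balanced, the entries of $a^{(1)}$ are not all equal, so $a^{(1)}$ is not a scalar multiple of the all-one vector $a^{(0)} \in \Z^{d_h}$.

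The next step is to verify that $\{a^{(0)}, a^{(1)}\}$ is a linearly independent set in $\Z^{d_h}$ (equivalently in $\R^{d_h}$). Since $a^{(0)}$ is nonzero, any linear dependence between the two vectors would force $a^{(1)}$ to be a scalar multiple of $a^{(0)}$, contradicting the observation above. Therefore the hypothesis of Lemma~\ref{lemma:lin_ind+1} holds with $\ell = 1$ and the single rectangle $R_1 = R$.

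Finally, applying Lemma~\ref{lemma:lin_ind+1} directly yields $|P| \geq \Rreal(M) + 1$, which is the desired bound. I do not anticipate any genuine obstacle here: the corollary is essentially a one-rectangle special case of the lemma, and the only point worth spelling out is the translation from the combinatorial condition ``not balanced in rows in block~$h$'' to the linear-algebraic condition ``$a^{(1)}$ is not a multiple of the all-one vector''. The parenthetical ``or columns'' case follows by the same argument, using the column-sequence version of Lemma~\ref{lemma:lin_ind+1} (whose proof is analogous, interchanging the roles of rows and columns of the rectangles $R_s$).
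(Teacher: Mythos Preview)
Your proposal is correct and follows essentially the same route as the paper's own proof: both observe that an unbalanced row (or column) sequence is not a scalar multiple of the all-one vector, hence the pair $\{a^{(0)},a^{(1)}\}$ is linearly independent, and then invoke Lemma~\ref{lemma:lin_ind+1} with $\ell=1$. The only minor remark is that the column case is already built into the statement of Lemma~\ref{lemma:lin_ind+1} (note the parenthetical ``or column'' there), so you need not appeal to a separate column-sequence version.
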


\begin{proof}
Suppose that $P$ includes a rectangle that is not balanced in rows (or columns) in some block $h \in [m]$, and put $d_h = \gcd(n_h,k_h)$.
This implies that the row (or column) sequence of this rectangle in block $h$ and the all-one vector in $\Z^{d_h}$ are linearly independent.
By applying Lemma~\ref{lemma:lin_ind+1} with $\ell=1$, the result follows.
\end{proof}

The following lemma relates the size of a partition of the ones in the complement of a circulant block diagonal matrix to the numbers of rows and columns of its rectangles in each block.

\begin{lemma}\label{lemma:divides+1}
Let $M$ be the complement of a $(k_1, \ldots, k_m ; n_1, \ldots, n_m)$ circulant block diagonal matrix for integers satisfying $n_i \geq k_i >0$ for all $i \in [m]$, and let $P$ be a partition of the ones in $M$ into combinatorial rectangles.
For each $i \in [m]$, put $d_i = \gcd(n_i,k_i)$.
Let $R = A \times B$ be a rectangle of $P$, and for each $i \in [m]$, put $A_i = A \cap (\{i\} \times [n_i])$ and $B_i = B \cap (\{i\} \times [n_i])$.
Define
\[S = \Big ( \sum_{i=1}^{m}{\tfrac{|A_i|}{k_i}} \Big ) \cdot \Big ( \sum_{i=1}^{m}{\tfrac{|B_i|}{k_i}} \Big )~~~~~\mbox{and}~~~~~N = \sum_{i=1}^{m}{\tfrac{n_i}{k_i}} - 1.\]
If $S$ cannot be written as $S = e \cdot N$ for a number $e$ of the form $e = \sum_{i=1}^{m}{e_i \cdot \frac{d_i}{k_i}}$ for integers $e_1, \ldots, e_m \in \Z$, then, $|P| \geq \Rreal(M) + 1$.
\end{lemma}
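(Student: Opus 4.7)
The plan is to argue by contradiction. Suppose $|P| \leq \Rreal(M)$, so $|P| = \Rreal(M)$, and set $M' = M - \mathbf{1}_A \mathbf{1}_B^T$. Since the rectangles of $P \setminus \{R\}$ sum as rank-one $0/1$ matrices to $M'$, we have $\Rreal(M') \leq |P|-1 = \Rreal(M)-1$. I will prove $\ker(M') \subseteq \ker(M)$, which yields $\Rreal(M') \geq \Rreal(M)$ and the desired contradiction. Any $x$ with $M' x = 0$ satisfies $M x = y \mathbf{1}_A$ with $y := \mathbf{1}_B^T x$, so it is enough to force $y = 0$. If $R$ is not balanced in rows or columns in some block, Corollary~\ref{cor:balanced+1} already gives $|P| \geq \Rreal(M)+1$, a contradiction; so from now on I assume $R$ is balanced in rows and columns in every block.

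Decompose $M = \mathbf{1}\mathbf{1}^T - D$, where $D$ is block-diagonal with blocks $D_i = D_{n_i, n_i - k_i}$, and set $X = \mathbf{1}^T x$. Then $Mx = y \mathbf{1}_A$ unfolds into $D_i x_{(i)} = X \mathbf{1} - y \mathbf{1}_{A_i}$ for each block. The vector $v$ with $v_{(i,j)} = 1/k_i$ satisfies $M^T v = N \mathbf{1}$ by the $k_i$-regularity of each $D_i$, and taking the $v$-inner product yields $N X = y \alpha$ with $\alpha = \sum_i |A_i|/k_i$. Balancedness of $A_i$ ensures that $D_i \tilde z_{(i)} = -\mathbf{1}_{A_i}$ has a solution $\tilde z_{(i)}$, so $x_{(i)} = (X/k_i)\mathbf{1} + y \tilde z_{(i)} + (\text{element of } \ker D_i)$; balancedness of $B_i$ gives $\mathbf{1}_{B_i} \perp \ker D_i$, so $\mathbf{1}_{B_i}^T \tilde z_{(i)}$ is independent of the choice of $\tilde z_{(i)}$. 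Taking $\mathbf{1}_{B_i}^T$, summing over $i$, and substituting $X = y\alpha/N$ and $S = \alpha\beta$ (with $\beta = \sum_i |B_i|/k_i$) into the definition $y = \mathbf{1}_B^T x$ produces
\[y \cdot \Bigl( 1 - \tfrac{S}{N} - \sum_{i=1}^m \mathbf{1}_{B_i}^T \tilde z_{(i)} \Bigr) = 0.\]

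The core of the proof is the \emph{integrality claim} that $\mathbf{1}_{B_i}^T \tilde z_{(i)} \in \tfrac{d_i}{k_i}\Z$ for every $i$. Granted this, the sum $\sum_i \mathbf{1}_{B_i}^T \tilde z_{(i)}$ lies in $\sum_i \tfrac{d_i}{k_i}\Z$, while the hypothesis of the lemma is precisely $S/N \notin \sum_i \tfrac{d_i}{k_i}\Z$, equivalent (using $1 \in \sum_i \tfrac{d_i}{k_i}\Z$) to $1 - S/N \notin \sum_i \tfrac{d_i}{k_i}\Z$; the displayed identity then forces $y = 0$, completing the argument.

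The hard part will be the integrality claim. My approach is by duality: for any $w_i$ satisfying $D_i^T w_i = \mathbf{1}_{B_i}$ (which exists since $B_i$ is balanced), one has $\mathbf{1}_{B_i}^T \tilde z_{(i)} = -w_i^T \mathbf{1}_{A_i}$, so it suffices to choose $w_i \in \tfrac{1}{k_i'}\Z^{n_i}$ with $k_i' := k_i/d_i$, i.e., to show that $k_i' \mathbf{1}_{B_i}$ lies in the integer image $D_i^T \Z^{n_i}$. Via the identification of the circulant $D_i$ with multiplication by $p_i(x) = 1 + x + \cdots + x^{k_i - 1}$ in $\Z[x]/(x^{n_i}-1)$, the balanced sublattice of $\Z^{n_i}$ coincides with the principal ideal generated by $q_i(x) = 1 + x + \cdots + x^{d_i - 1}$ (using that $q_i$ is monic, so integer division in $\Z[x]$ applies), and since $p_i(x) = q_i(x) r_i(x)$ with $r_i(x) = (x^{k_i}-1)/(x^{d_i}-1)$, the needed containment reduces to $k_i' \in \bigl( r_i(x),\, (x^{n_i}-1)/q_i(x) \bigr)$ in $\Z[x]$. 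A direct resultant computation at the level of cyclotomic factorizations evaluates this resultant to $\pm k_i'$, furnishing an explicit Bezout identity.
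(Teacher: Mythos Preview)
Your proposal is correct, and its overall architecture matches the paper's: reduce to the balanced case via Corollary~\ref{cor:balanced+1}, set $M' = M - R$, show $\ker(M') \subseteq \ker(M)$ by deriving two linear relations between $X = \mathbf 1^T x$ and $y = \mathbf 1_B^T x$, and use the hypothesis on $S$ and $N$ to force $y = 0$.  The relation $NX = \alpha y$ and the relation coming from expanding $y = \mathbf 1_B^T x$ are exactly what the paper obtains.

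The genuine difference lies in how the integrality claim $\mathbf 1_{B_i}^T \tilde z_{(i)} \in \tfrac{d_i}{k_i}\Z$ is established.  The paper never passes to polynomial rings or resultants; instead it subtracts consecutive equations of the system to get $x_{i,j} - x_{i,j+k_i} = z_{i,j+1} - z_{i,j} \in \{-y,0,y\}$, iterates this to obtain $x_{i,j} = x_{i,t} + c_{i,j}\,y$ with $c_{i,j}\in\Z$ and $t\equiv j\pmod{d_i}$, and then plugs into one block equation and into the definition of $y$.  This is entirely elementary.  Your route—duality $\mathbf 1_{B_i}^T\tilde z_{(i)} = -w_i^T\mathbf 1_{A_i}$, reduction to $k_i'\,\mathbf 1_{B_i}\in D_i^T\Z^{n_i}$, identification of the balanced sublattice with the ideal $(q_i)$, and finally $k_i'\in\bigl(r_i,(x^{n_i}-1)/q_i\bigr)$ via a resultant—does work (the cyclotomic factorization indeed gives $\mathrm{Res}(r_i,h_i)=\pm k_i'$, since for $e'\mid k_i$, $e'\nmid d_i$ and $e\mid n_i$, $e\nmid d_i$ neither $e/e'$ nor $e'/e$ can be an integer, leaving only the factor $\mathrm{Res}(r_i,\Phi_1)=r_i(1)=k_i'$), but it is considerably heavier machinery for the same conclusion.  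What your approach buys is a cleaner conceptual picture (everything phrased via images, kernels, and ideals in $\Z[x]/(x^{n_i}-1)$) that would transfer more readily to other circulant-type blocks; what the paper's approach buys is a completely self-contained argument that a reader can verify without knowing resultants of cyclotomic polynomials.

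One small point worth tightening: your substitution $X = y\alpha/N$ tacitly uses $N\neq 0$; this holds in every nondegenerate case (if $N=0$ then $m=1$ and $n_1=k_1$, so $M=0$ and there is no rectangle $R$), but it is worth saying.
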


\begin{proof}
If the given rectangle $R = A \times B$ is not balanced in rows or in columns in some block, then it follows from Corollary~\ref{cor:balanced+1} that $|P| \geq \Rreal(M) + 1$ and we are done. It thus can be assumed that $R$ is balanced in rows and in columns in all blocks.

Viewing $R$ as a $0,1$ matrix, consider the $0,1$ matrix $M' = M -R$.
We turn to prove that $\Rreal(M') \geq \Rreal(M)$. Since the rectangles of $P$ excluding $R$ form a partition of the ones in $M'$ into combinatorial rectangles, this will imply that
\[|P| \geq \Rbin(M') + 1 \geq \Rreal(M') + 1 \geq \Rreal(M) + 1,\]
and will thus complete the proof.

To prove that $\Rreal(M') \geq \Rreal(M)$, consider the system of linear equations
\begin{equation}\label{eq:M'2}
M' \cdot x = 0,
\end{equation}
where $x$ is a real vector whose entries are denoted by $x_{i,j}$ for $i \in [m]$ and $j \in [n_i]$, such that $x_{i,j}$ is the $j$th entry of the $i$th block in $x$.
As before, the indices $j$ in the pairs $(i,j)$ are considered modulo $n_i$ with their representatives in $[n_i]$.
For every $i \in [m]$, put $X_i = \sum_{j \in [n_i]}{x_{i,j}}$, and let $X = \sum_{i =1}^{m}{X_i}$.
Further, let $y = \sum_{(i,j) \in B}{x_{i,j}}$ denote the sum of the variables that correspond to the columns of $R$.
Observe that the $(i,j)$ equation of~\eqref{eq:M'2} can be written as
\begin{eqnarray}\label{eq:equation_M'}
X - (x_{i,j}+x_{i,j+1}+ \cdots + x_{i,j+k_i-1}) = z_{i,j},
\end{eqnarray}
where $z_{i,j} = y$ if $(i,j) \in A$ and $z_{i,j} = 0$ otherwise.

For each $i \in [m]$, consider the $n_i/d_i$ equations of the $i$th block whose indices in the block are congruent modulo $d_i$ to, say, $1$, i.e., the equations indexed by $(i,j)$ with $j = 1~(\mod ~d_i)$.
Observe that every variable $x_{i',j'}$ with $i' \neq i$ appears with coefficient $1$ in the left hand side of each of these equations.
For $i' = i$, however, the variable $x_{i',j'}$ has coefficient $0$ in precisely $k_{i}/d_{i}$ of the equations and $1$ elsewhere.
This implies that the sum of the left hand side of these $n_{i}/d_{i}$ equations is
\[\tfrac{n_{i}}{d_{i}} \cdot X- \tfrac{k_{i}}{d_{i}} \cdot X_{i}.\]
As for the right hand side, the sum of these equations is the variable $y$ multiplied by the number of rows $(i,j')$ with $j' = 1~(\mod~d_{i})$ that belong to the rectangle $R$. Since $R$ is balanced in rows in block $i$, this coefficient is $|A_i|/d_i$.
Hence, for each $i \in [m]$, we derive the equation
\[\tfrac{n_i}{d_i} \cdot X - \tfrac{k_i}{d_i} \cdot X_i = \tfrac{|A_i|}{d_i} \cdot y,\]
and by multiplying it by $d_i/k_i$, we obtain the equation
\[\tfrac{n_i}{k_i} \cdot X -  X_i = \tfrac{|A_i|}{k_i} \cdot y.\]
The sum of these $m$ equations gives the equation
\begin{eqnarray}\label{eq:EquationX-y-a}
\Big ( \sum_{i=1}^{m}{\tfrac{n_i}{k_i}} -1 \Big ) \cdot X = \Big (\sum_{i=1}^{m}{\tfrac{|A_i|}{k_i}} \Big ) \cdot y.
\end{eqnarray}

We turn to obtain another equation relating $X$ and $y$.
To do so, let us first consider, for any block $i \in [m]$ with $n_i > k_i$, the $n_i$ equations obtained by subtracting equation $(i,j)$, given in~\eqref{eq:equation_M'}, from equation $(i,j+1)$ for $j \in [n_i]$ (again, the indices $j$ are considered modulo $n_i$ in $[n_i]$). Every such equation has the form
\[x_{i,j} - x_{i,j+k_i} = z_{i,j+1} - z_{i,j}.\]
Recalling that each $z_{i,j}$ is either $y$ or $0$, this implies that every variable $x_{i,j}$ can be written as the sum of the variable $x_{i,j+k_i}$, which is the cyclic shift of $x_{i,j}$ by $k_i$ locations in the block, and an integer multiple of $y$.
Moreover, the fact that $d_i = \gcd(n_i,k_i)$ implies that $d_i = a \cdot k_i ~(\mod ~n_i)$ for some integer $a \geq 0$. Hence, by applying the above argument $a$ times, it follows that $x_{i,j}$ can be written as the sum of the variable $x_{i,j+d_i}$ and an integer multiple of $y$.
By repeatedly applying this argument, it follows that for every $j \in [n_i]$ there exists some $c_{i,j} \in \Z$ such that
\begin{eqnarray}\label{eq:x_{i,j}}
x_{i,j} = x_{i,t} + c_{i,j} \cdot y,
\end{eqnarray}
for the $t \in [d_i]$ that satisfies $j = t~(\mod~ d_i)$.
By substituting the equations of~\eqref{eq:x_{i,j}} into the first equation of block $i$, indexed $(i,1)$, it follows that there exists an integer $e_i \in \Z$ for which
\[X - \tfrac{k_i}{d_i} \cdot \sum_{t=1}^{d_i}{x_{i,t}} = e_i \cdot y,\]
and by rearranging, we obtain that
\begin{eqnarray}\label{eq:X-x_{i,j}}
\tfrac{1}{d_i} \cdot \sum_{t=1}^{d_i}{x_{i,t}} = \tfrac{1}{k_i} \cdot (X-e_i \cdot y).
\end{eqnarray}
Note that for an $i \in [m]$ that satisfies $n_i=k_i$, such an equality is directly given by any equation of the $i$th block in~\eqref{eq:M'2}.

By substituting the equations of~\eqref{eq:x_{i,j}} into the definition of $y = \sum_{(i,j) \in B}{x_{i,j}}$, using the fact that the rectangle $R$ is balanced in columns in all blocks, we derive for some $\widetilde{c} \in \Z$ the equation
\[ y = \tfrac{|B_1|}{d_1} \cdot \sum_{t=1}^{d_1}{x_{1,t}} + \tfrac{|B_2|}{d_2} \cdot \sum_{t=1}^{d_2}{x_{2,t}} + \cdots + \tfrac{|B_m|}{d_m} \cdot \sum_{t=1}^{d_m}{x_{m,t}} + \widetilde{c} \cdot y.\]
By combining this equation with the equations of~\eqref{eq:X-x_{i,j}}, we obtain the equation
\[ y = \tfrac{|B_1|}{k_1} \cdot (X - e_1 \cdot y) + \tfrac{|B_2|}{k_2} \cdot (X - e_2 \cdot y) + \cdots + \tfrac{|B_m|}{k_m} \cdot (X - e_m \cdot y) + \widetilde{c} \cdot y,\]
which by rearranging is equivalent to
\begin{eqnarray}\label{eq:EquationX-y-b}
\Big ( \sum_{i=1}^{m}{\tfrac{|B_i|}{k_i}} \Big ) \cdot X = \Big ( 1-\widetilde{c}+ \sum_{i=1}^{m}{\tfrac{e_i \cdot |B_i|}{k_i}} \Big ) \cdot y.
\end{eqnarray}

Finally, since $R$ is balanced in columns in all blocks, it follows that $d_i$ divides $|B_i|$ for each $i \in [m]$.
By combining this with the assumption of the lemma, it follows that
\begin{eqnarray}\label{eq:condition_X_y}
\Big ( \sum_{i=1}^{m}{\tfrac{|A_i|}{k_i}} \Big ) \cdot \Big ( \sum_{i=1}^{m}{\tfrac{|B_i|}{k_i}} \Big )  \neq \Big ( 1-\widetilde{c}+ \sum_{i=1}^{m}{e_i \cdot \tfrac{|B_i|}{k_i}} \Big ) \cdot \Big ( \sum_{i=1}^{m}{\tfrac{n_i}{k_i}} -1 \Big ).
\end{eqnarray}
Consider the equations given in~\eqref{eq:EquationX-y-a} and~\eqref{eq:EquationX-y-b} in the variables $X$ and $y$, and observe that using~\eqref{eq:condition_X_y} they imply that $X = y =0$.
Therefore, any solution $x$ of the system~\eqref{eq:M'2} further satisfies $M \cdot x = 0$, hence $\Rreal(M') \geq \Rreal(M)$, and we are done.
\end{proof}

\begin{remark}\label{remark:lemma_common_k}
Let us mention here a couple of useful special cases of Lemma~\ref{lemma:divides+1}.
\begin{enumerate}
  \item\label{itm:remark_1} For the case where $d_i = k_i$ for all $i \in [m]$, the condition on the numbers $S$ and $N$ in the lemma is equivalent to the condition that $S$ is an integer multiple of $N$.
  \item\label{itm:remark_2} For the case where $k_1, \ldots,k_m$ are all equal to some integer $k$, it holds that
  \[S = \tfrac{1}{k^2} \cdot \sum_{i=1}^{m}{|A_i|} \cdot \sum_{i=1}^{m}{|B_i|} = \tfrac{1}{k^2} \cdot |A| \cdot |B|~~~~~\mbox{and}~~~~~N = \sum_{i=1}^{m}{\tfrac{n_i}{k}} - 1 = \tfrac{1}{k} \cdot (n-k),\]
  where $n = \sum_{i=1}^{m}{n_i}$. Hence, the condition on the numbers $S$ and $N$ in the lemma is equivalent to the condition that $|A| \cdot |B|$ cannot be written as $|A| \cdot |B| = e \cdot (n-k)$ where $e = \sum_{i =1}^{m}{e_i \cdot d_i}$ for some $e_1, \ldots, e_m \in \Z$.
\end{enumerate}
\end{remark}

\subsection{The Binary Rank versus the Real Rank}

We turn to present several families of circulant block diagonal matrices for which the complement satisfies that its binary rank is strictly larger than its real rank.
We start with the following simple consequence of Corollary~\ref{cor:balanced+1}.

\begin{theorem}\label{thm:ni=ki+di}
Let $M$ be the complement of a $(k_1, \ldots, k_m ; n_1, \ldots, n_m)$ circulant block diagonal matrix for integers satisfying $n_i \geq k_i \geq 0$ for all $i \in [m]$.
Suppose that some $j \in [m]$ satisfies $n_j=k_j+\gcd(n_j,k_j)$ and $\gcd(n_j,k_j) > 1$.
Then, $\Rbin(M) \geq \Rreal(M)+1$.
\end{theorem}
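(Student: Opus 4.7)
The plan is to apply Corollary~\ref{cor:balanced+1}: it suffices to show that every partition $P$ of the ones of $M$ into combinatorial rectangles must contain some rectangle that is not balanced in rows or columns in block $j$. Let me fix $n = n_j$, $k = k_j$, $d = d_j = \gcd(n,k) > 1$, and note that the hypothesis $n = k + d$ together with (the implicit assumption) $k > 0$ yields $d < n$. The key structural observation is that in the $j$-th diagonal block of $M$, position $(j,i),(j,i')$ is a one iff $i - i' \pmod{n} \in \{1,2,\ldots,d\}$, so the allowed rows for a fixed column form an arc of exactly $d$ consecutive positions mod $n$, and this arc hits each residue class mod $d$ exactly once.

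Since $n > k$, the $j$-th diagonal block of $M$ contains at least one one, so $P$ must contain a rectangle $R = A \times B$ covering it, and this rectangle satisfies that both $A_j := A \cap (\{j\}\times [n])$ and $B_j := B \cap (\{j\}\times [n])$ are nonempty. I will argue by contradiction that $R$ cannot be balanced in both rows and columns in block $j$. If it were, then being nonempty and balanced in rows in block $j$ would force $|A_j| = a \cdot d \geq d$ with one row per residue class mod $d$, and similarly $|B_j| \geq d \geq 2$.

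Now use the compatibility constraint: for every $(j, i') \in B_j$, every $(j, i) \in A_j$ must satisfy $i \in \{i'+1, i'+2, \ldots, i'+d\} \pmod{n}$, an arc of exactly $d$ elements. Combined with $|A_j| \geq d$, this forces $A_j$ to equal that whole arc. But this must hold simultaneously for each of the at least two distinct columns in $B_j$, producing two different arcs of length $d$ both equal to $A_j$; since $d < n$, distinct starting points give distinct arcs, a contradiction. Hence $R$ is not balanced in rows or in columns in block $j$, and Corollary~\ref{cor:balanced+1} delivers $|P| \geq \Rreal(M) + 1$.

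The main subtlety I expect is ensuring the arithmetic of the ``narrow band'' is handled cleanly—in particular translating $n = k + d$ into the statement that a row's support in block $j$ is a $d$-arc meeting each residue class mod $d$ exactly once, and recording the nondegeneracy $d < n$ that prevents arcs at different starts from coinciding. Once these are in place, the contradiction from comparing $|A_j| \geq d$ with the uniqueness of the $d$-arc compatible with any $i' \in B_j$ is immediate.
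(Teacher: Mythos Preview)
Your proof is correct and follows essentially the same approach as the paper: pick a rectangle $R$ meeting the $j$th diagonal block, argue that if $R$ were balanced in rows and columns then $|A_j|,|B_j|\geq d$, and derive a contradiction from the fact that each row (equivalently each column) of the $j$th diagonal block has only $d$ ones arranged as a length-$d$ arc. The paper phrases the contradiction as ``$R$ would have to cover all $d$ ones in at least two distinct rows, but distinct rows have distinct supports''; you phrase the dual version, fixing columns and noting that two distinct columns pin $A_j$ to two distinct $d$-arcs. These are the same argument up to transposition.

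One small remark: you correctly flag that the case $k_j=0$ (where $d=n$ and the block is all-ones) is not covered by your argument, and in fact the paper's proof has the same tacit assumption, since ``the rows of the $j$th diagonal block are pairwise distinct'' also requires $k_j>0$.
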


\begin{proof}
Let $P$ be a partition of the ones in $M$ into combinatorial rectangles.
Consider the $j$th diagonal block of $M$, which is equal to the complement of $D_{n_j,n_j-k_j}$, and notice that each of its rows consists of $n_j-k_j$ ones.
By $n_j > k_j$, this diagonal block is nonzero, hence there exists a rectangle $R \in P$ that includes a row and a column of the $j$th block of $M$.
We turn to show that this $R$ is not balanced in rows or in columns in block $j$.
This will imply, by Corollary~\ref{cor:balanced+1}, that $|P| \geq \Rreal(M) + 1$, and will thus complete the proof.

Assume for the sake of contradiction that $R$ is balanced in rows and in columns in block $j$, and put $d = \gcd(n_j,k_j) = n_j-k_j$.
It follows that $R$ includes at least $d$ rows and at least $d$ columns in block $j$, and therefore, it must cover all the $d$ ones in at least $d> 1$ rows of the $j$th diagonal block of $M$.
But this is impossible because the rows of the $j$th diagonal block of $M$ are pairwise distinct, and therefore a rectangle cannot cover all the ones in two of them.
\end{proof}

We next prove Theorem~\ref{thm:ki_divides_ni+1}, whose proof uses the following lemma.

\begin{lemma}\label{lemma:weighted_rec}
Let $M$ be the complement of a $(k_1, \ldots, k_m ; n_1, \ldots, n_m)$ circulant block diagonal matrix for integers satisfying $n_i \geq k_i >0$ for all $i \in [m]$.
Suppose that for each $i \in [m]$, $k_i$ divides $n_i$, and that some $i \in [m]$ satisfies $n_i > k_i >1$.
Every partition $P$ of the ones in $M$ into combinatorial rectangles either satisfies $|P| \geq \Rreal(M) + 1$, or includes a rectangle $R = A \times B$ such that
\[ \Big ( \sum_{i=1}^{m}{\tfrac{|A_i|}{k_i}} \Big ) \cdot \Big ( \sum_{i=1}^{m}{\tfrac{|B_i|}{k_i}} \Big ) < \sum_{i=1}^{m}{\tfrac{n_i}{k_i}} - 1,\]
where $A_i = A \cap (\{i\} \times [n_i])$ and $B_i = B \cap (\{i\} \times [n_i])$ for all $i \in [m]$.
\end{lemma}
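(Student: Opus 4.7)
The plan is to argue by contradiction: assume that $|P| \leq \Rreal(M)$ (otherwise the first alternative of the lemma holds) and that every rectangle $R = A \times B$ in $P$ satisfies $S_R \geq N$, where $S_R = \bigl(\sum_i |A_i|/k_i\bigr)\bigl(\sum_i |B_i|/k_i\bigr)$ and $N = \sum_i n_i/k_i - 1$, and derive a contradiction. This forces some rectangle to have $S_R < N$, giving the second alternative. The hypothesis $k_i \mid n_i$ will be used in two places: to ensure that $N$ is an integer (and to match $d_i = k_i$ if one wishes to invoke Lemma~\ref{lemma:divides+1} and Item~\ref{itm:remark_1} of Remark~\ref{remark:lemma_common_k} as additional rigidity), and, more crucially, to write $\Rreal(M)$ in closed form.

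The heart of the argument is a double count of $\sum_{R \in P} S_R$. Expanding $S_R$ and swapping the order of summation gives
\[ \sum_{R \in P} S_R = \sum_{i, j \in [m]} \frac{1}{k_i k_j} \sum_{R \in P} |A_{R, i}| \cdot |B_{R, j}|, \]
and the inner sum equals the number of ones of $M$ in block $(i, j)$, because the rectangles of $P$ partition those ones. For $i \neq j$ the off-diagonal block of $M$ is all-one, contributing $n_i n_j$; for $i = j$ the diagonal block is the complement of $D_{n_i, n_i - k_i}$ and has $n_i - k_i$ ones in each row, contributing $n_i(n_i - k_i)$. Setting $T = \sum_i n_i / k_i$, a short computation collapses everything to
\[ \sum_{R \in P} S_R = T^2 - T = N(N + 1). \]

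Combining this with the standing contradiction assumption $S_R \geq N$ for every $R \in P$ yields $|P| \cdot N \leq N(N+1)$, hence $|P| \leq N + 1$. Since any partition of the ones of $M$ into combinatorial rectangles satisfies $|P| \geq \Rbin(M) \geq \Rreal(M)$, we deduce $\Rreal(M) \leq N + 1$. The contradiction now comes from the identity
\[ \Rreal(M) - (N + 1) = \sum_{i = 1}^{m} \Bigl( n_i - k_i + 1 - \tfrac{n_i}{k_i} \Bigr) = \sum_{i = 1}^{m} \frac{(k_i - 1)(n_i - k_i)}{k_i}, \]
which uses the closed form $\Rreal(M) = \sum_i (n_i - k_i + 1)$ given by Lemma~\ref{lemma:rank_M_comp} (since $\gcd(n_i, k_i) = k_i$ under $k_i \mid n_i$). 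Every summand is nonnegative, and the index $i$ guaranteed by the hypothesis $n_i > k_i > 1$ contributes a strictly positive summand, so $\Rreal(M) > N + 1$, contradicting $\Rreal(M) \leq N + 1$. The main obstacle is the double-counting step: once one sees that the all-one off-diagonal blocks and the regularity of the diagonal blocks together force the cross terms to collapse neatly to $N(N+1)$, the remainder is a direct algebraic verification.
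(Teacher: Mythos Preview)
Your proof is correct and follows essentially the same approach as the paper: the double-counting of $\sum_{R \in P} S_R$ to obtain $T(T-1) = N(N+1)$, combined with the inequality $\Rreal(M) = \sum_i (n_i - k_i + 1) > T = N+1$ (equivalently, your identity $(k_i-1)(n_i-k_i)/k_i \geq 0$ with strict inequality for some $i$), is exactly what the paper does. The only cosmetic difference is that the paper phrases the final step as a direct averaging argument (from $\sum_R g(R) < |P| \cdot N$ some $g(R) < N$) rather than as a contradiction, but the content is identical.
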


\begin{proof}
The assumptions of the lemma imply that $k_i = \gcd(n_i,k_i)$ for all $i \in [m]$.
Since some $i \in [m]$ satisfies $n_i > k_i > 0$, it follows that $M$ is nonzero.
By applying Lemma~\ref{lemma:rank_M_comp} to $\overline{M}$, we obtain that $\Rreal(M) = \sum_{i=1}^{m}{(n_i-k_i+1)}$.
If the given partition satisfies $|P| \geq \Rreal(M)+1$, then we are done.
So suppose that it satisfies $|P| \leq \Rreal(M)$. Since the real rank forms a lower bound on the binary rank, it follows that
\begin{eqnarray}\label{eq:|P|}
|P| = \Rreal(M) = \sum_{i=1}^{m}{(n_i-k_i+1)}.
\end{eqnarray}
For every rectangle $R = A \times B$ of $P$, consider the quantity
\[ g(R) = \Big ( \sum_{i=1}^{m}{\tfrac{|A_i|}{k_i}} \Big ) \cdot \Big ( \sum_{i=1}^{m}{\tfrac{|B_i|}{k_i}} \Big ) = \sum_{i,j \in [m]}{\tfrac{|A_i| \cdot |B_j|}{k_i \cdot k_j}}.\]
Observe that for every $i,j \in [m]$, the sum of the term $|A_i| \cdot |B_j|$ over all rectangles $R = A \times B$ of $P$ precisely counts the ones of the matrix $M$ in the $(i,j)$ block. The latter is equal to $n_i \cdot n_j$ whenever $i \neq j$ and to $n_i \cdot (n_i - k_i)$ whenever $i=j$. It thus follows that
\begin{eqnarray*}
\sum_{R \in P}{g(R)} &=& \sum_{i \neq j \in [m]}{\tfrac{n_i \cdot n_j}{k_i \cdot k_j}} + \sum_{i =1}^{m}{\tfrac{n_i \cdot (n_i-k_i)}{k_i^2}} = \Big (\sum_{i =1}^{m}{\tfrac{n_i}{k_i}} \Big )^2 - \sum_{i =1}^{m}{\tfrac{n_i}{k_i}}\\ &=& \Big (\sum_{i=1}^{m}{\tfrac{n_i}{k_i}} \Big ) \cdot \Big (\sum_{i=1}^{m}{\tfrac{n_i}{k_i}} -1\Big ).
\end{eqnarray*}
It can be checked that for every $i \in [m]$, it holds that $\frac{n_i}{k_i} \leq n_i-k_i+1$, and that a strict inequality holds whenever $n_i > k_i >1$. By assumption, there exists an $i \in [m]$ with a strict inequality, which using~\eqref{eq:|P|} implies that
\[ \sum_{R \in P}{g(R)} < \sum_{i=1}^{m}{(n_i-k_i+1)} \cdot  \Big (\sum_{i =1}^{m}{\tfrac{n_i}{k_i}} -1\Big ) = |P| \cdot \Big (\sum_{i=1}^{m}{\tfrac{n_i}{k_i}} -1\Big ).\]
This implies that there exists a rectangle $R \in P$ satisfying $g(R) < \sum_{i=1}^{m}{\tfrac{n_i}{k_i}} -1$, as desired.
\end{proof}

We now restate and prove Theorem~\ref{thm:ki_divides_ni+1}.

\newtheorem*{thmb}{Theorem \ref{thm:ki_divides_ni+1}}
\begin{thmb}
Let $M$ be the complement of a $(k_1, \ldots, k_m; n_1, \ldots, n_m)$ circulant block diagonal matrix for integers satisfying $n_i \geq k_i > 0$ for all $i \in [m]$.
Suppose that for each $i \in [m]$, $k_i$ divides $n_i$, and that some $i \in [m]$ satisfies $n_i > k_i >1$.
Then, $\Rbin(M) \geq \Rreal(M) + 1$.
\end{thmb}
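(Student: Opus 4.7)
The plan is to combine the two preceding lemmas essentially mechanically: Lemma~\ref{lemma:weighted_rec} supplies, for any partition $P$ that does not already meet the desired bound, a rectangle $R = A \times B$ whose associated quantity $S := \bigl(\sum_i |A_i|/k_i\bigr)\bigl(\sum_i |B_i|/k_i\bigr)$ is strictly smaller than $N := \sum_i n_i/k_i - 1$. I will then feed this rectangle into Lemma~\ref{lemma:divides+1} to conclude that $|P| \geq \Rreal(M)+1$.

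Concretely, let $P$ be an arbitrary partition of the ones of $M$ into combinatorial rectangles. If $|P| \geq \Rreal(M) + 1$ there is nothing to prove, so I may assume by Lemma~\ref{lemma:weighted_rec} that $P$ contains a rectangle $R = A \times B$ satisfying $S < N$. Since $k_i$ divides $n_i$ for every $i \in [m]$, we have $d_i = \gcd(n_i,k_i) = k_i$, so the special case in Remark~\ref{remark:lemma_common_k}(\ref{itm:remark_1}) applies: the hypothesis of Lemma~\ref{lemma:divides+1} reduces to the condition that $S$ is not an integer multiple of $N$.

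It remains to verify this reduced condition for the rectangle $R$ produced above. Since $k_i \mid n_i$ for all $i$, $N$ is an integer, and the assumption that some $i \in [m]$ has $n_i > k_i > 1$ forces $n_i / k_i \geq 2$, so $N \geq 1$ is a positive integer. On the other hand, $R$ is a rectangle of ones in $M$, hence $A$ and $B$ are nonempty, which implies $\sum_i |A_i|/k_i > 0$ and $\sum_i |B_i|/k_i > 0$, and therefore $S > 0$. The inequality $0 < S < N$ then shows that $S/N \in (0,1)$ cannot be an integer, so $S$ is not an integer multiple of $N$, and Lemma~\ref{lemma:divides+1} yields $|P| \geq \Rreal(M) + 1$, as desired.

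I do not foresee a serious obstacle: both technical lemmas have already absorbed the work (the linear-algebra argument on rank and the counting bound coming from double counting $g(R)$), and the remaining step is just checking that the positive rational $S$ lies strictly between $0$ and the positive integer $N$. The only mild point to keep in mind is the role of the hypothesis that some index satisfies $n_i > k_i > 1$, which is precisely what gives the strict inequality $\sum_i n_i/k_i < \sum_i (n_i - k_i + 1) = \Rreal(M)$ needed inside Lemma~\ref{lemma:weighted_rec} and also ensures $N \geq 1$ above.
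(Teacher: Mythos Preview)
Your proposal is correct and follows essentially the same approach as the paper: invoke Lemma~\ref{lemma:weighted_rec} to obtain a rectangle with $S<N$, then apply Lemma~\ref{lemma:divides+1} via Remark~\ref{remark:lemma_common_k}(\ref{itm:remark_1}). Your write-up is in fact a bit more explicit than the paper's in justifying why $0<S<N$ precludes $S$ from being an integer multiple of $N$.
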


\begin{proof}
Let $P$ be a partition of the ones in $M$ into combinatorial rectangles.
By Lemma~\ref{lemma:weighted_rec}, we either have $|P| \geq \Rreal(M) + 1$, or there exists a rectangle $R = A \times B$ in $P$ satisfying
\begin{eqnarray}\label{eq:rectangle_di=ki}
\Big ( \sum_{i=1}^{m}{\tfrac{|A_i|}{k_i}} \Big ) \cdot \Big ( \sum_{i=1}^{m}{\tfrac{|B_i|}{k_i}} \Big ) < \sum_{i=1}^{m}{\tfrac{n_i}{k_i}} - 1,
\end{eqnarray}
where $A_i = A \cap (\{i\} \times [n_i])$ and $B_i = B \cap (\{i\} \times [n_i])$ for each $i \in [m]$.
For the latter case, apply Lemma~\ref{lemma:divides+1} with this rectangle $R$ and with $d_i = \gcd(n_i,k_i)=k_i$ for $i \in [m]$ (see Remark~\ref{remark:lemma_common_k}, Item~\ref{itm:remark_1}).
Since the left hand side of~\eqref{eq:rectangle_di=ki} is not an integer multiple of the right hand side, it follows that $|P| \geq \Rreal(M) + 1$, and we are done.
\end{proof}

Our next result is the following.

\begin{theorem}\label{thm:common_k_d}
Let $M$ be the complement of a $(k ; n_1, \ldots, n_m)$ circulant block diagonal matrix for integers satisfying $n_i \geq k >0$ for all $i \in [m]$.
Put $d_i = \gcd(n_i,k)$ for every $i \in [m]$, and define $n = \sum_{i=1}^{m}{n_i}$.
Suppose that there exists an integer $d >1$ that divides $d_i$ for all $i \in [m]$ and that satisfies $\Rreal(M) > \frac{n}{d}$.
Then, $\Rbin(M) \geq \Rreal(M) + 1$.
\end{theorem}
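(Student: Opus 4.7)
The plan is to show that every partition $P$ of the ones in $M$ into combinatorial rectangles satisfies $|P| \geq \Rreal(M) + 1$. I will do this via a three-way case analysis that invokes Corollary~\ref{cor:balanced+1} and Lemma~\ref{lemma:divides+1} to dispose of partitions containing a rectangle with ``bad'' structure, and then uses a counting argument to rule out the remaining case using the slack provided by the hypothesis $\Rreal(M) > n/d$.

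First, if some rectangle in $P$ is not balanced in rows or in columns in some block, Corollary~\ref{cor:balanced+1} immediately yields $|P| \geq \Rreal(M) + 1$. So I assume every rectangle $R = A \times B$ in $P$ is balanced in every block; since balance forces $d_i \mid |A_i|$ and $d_i \mid |B_i|$ for each $i \in [m]$ and $d \mid d_i$ by hypothesis, this gives $d \mid |A|$ and $d \mid |B|$ for every $R \in P$. Second, since $k_1 = \cdots = k_m = k$, I apply Remark~\ref{remark:lemma_common_k}, Item~\ref{itm:remark_2}: the hypothesis of Lemma~\ref{lemma:divides+1} holds for a rectangle $R = A \times B$ whenever $|A| \cdot |B|$ cannot be written as $e \cdot (n-k)$ with $e = \sum_{i} e_i d_i$ for integers $e_i$. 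Because $d$ divides every $d_i$, every such $e$ is a multiple of $d$, so it suffices to find a rectangle for which $d(n-k)$ does not divide $|A| \cdot |B|$. If such a rectangle exists in $P$, Lemma~\ref{lemma:divides+1} again gives $|P| \geq \Rreal(M) + 1$.

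The remaining case to rule out is that every $R = A \times B \in P$ is balanced in every block and also satisfies $d(n-k) \mid |A| \cdot |B|$. For such $R$, write $|A| \cdot |B| = e_R \cdot d(n-k)$ where $e_R$ is a positive integer (it is an integer by the divisibility and positive because the rectangle is nonempty, so $|A| \cdot |B| \geq d(n-k)$). Each row of $M$ has exactly $n-k$ ones, so the total number of ones in $M$ is $n(n-k)$; summing $|A| \cdot |B|$ over $R \in P$ and dividing by $d(n-k)$ yields $\sum_{R \in P} e_R = n/d$. Since each $e_R \geq 1$, this forces $|P| \leq n/d$. But $|P| \geq \Rbin(M) \geq \Rreal(M) > n/d$ by hypothesis, a contradiction. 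Hence this case is impossible, and one of the first two cases must apply, giving $|P| \geq \Rreal(M) + 1$ as required.

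The main obstacle is essentially bookkeeping: one must correctly translate the rather opaque condition of Lemma~\ref{lemma:divides+1} into the clean divisibility $d(n-k) \mid |A| \cdot |B|$ via the common-$k$ specialization of Remark~\ref{remark:lemma_common_k}, and then recognize that the hypothesis $\Rreal(M) > n/d$ is precisely the slack needed so that the counting upper bound $|P| \leq n/d$ violates the real rank lower bound $|P| \geq \Rreal(M)$.
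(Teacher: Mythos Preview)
Your proof is correct and uses essentially the same approach as the paper: both combine Lemma~\ref{lemma:divides+1} (via Remark~\ref{remark:lemma_common_k}, Item~\ref{itm:remark_2}) with the counting fact that the total number of ones in $M$ is $n(n-k)$ while $|P| \geq \Rreal(M) > n/d$. The paper packages this more directly---it averages to find a single rectangle with $|A|\cdot|B| < d(n-k)$ and applies Lemma~\ref{lemma:divides+1} to it---whereas you reach the same conclusion by the contrapositive case analysis; note that your Case~1 (and the observation $d \mid |A|$, $d \mid |B|$) is redundant, since Lemma~\ref{lemma:divides+1} already handles unbalanced rectangles internally.
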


\begin{proof}
Let $P$ be a partition of the ones in $M$ into combinatorial rectangles.
The assumption implies that $|P| \geq \Rreal(M) > \frac{n}{d}$.
Since the total number of ones in $M$ is $n \cdot (n-k)$, it follows that there exists a rectangle $R = A \times B$ in $P$ for which it holds that
\[|A| \cdot |B| \leq \frac{n \cdot (n-k)}{|P|} < d \cdot (n-k).\]
Since $d$ divides $d_i$ for all $i \in [m]$, it follows that $|A| \cdot |B|$ cannot be written as $e \cdot (n-k)$ where $e = \sum_{i=1}^{m}{e_i \cdot d_i}$ and $e_1,\ldots,e_m \in \Z$.
Applying Lemma~\ref{lemma:divides+1} with the rectangle $R$ (see Remark~\ref{remark:lemma_common_k}, Item~\ref{itm:remark_2}), it follows that $|P| \geq \Rreal(M) + 1$, as required.
\end{proof}

As a corollary of Theorem~\ref{thm:common_k_d}, we obtain the following result.

\begin{theorem}\label{thm:common_k_d_same}
Let $M$ be the complement of a $(k ; n_1, \ldots, n_m)$ circulant block diagonal matrix for integers satisfying $n_i \geq k >0$ for all $i \in [m]$.
Suppose that there exists an integer $d >1$ such that $d = \gcd(n_i,k)$ for all $i \in [m]$ and such that $n_i > d$ for some $i \in [m]$.
Then, $\Rbin(M) \geq \Rreal(M) + 1$.
\end{theorem}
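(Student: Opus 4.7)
The plan is to deduce Theorem~\ref{thm:common_k_d_same} as a corollary of Theorem~\ref{thm:common_k_d}. To invoke that theorem, I need to verify its nontrivial hypothesis $\Rreal(M) > n/d$, where $n = \sum_{i=1}^{m} n_i$, since the other hypotheses (that $d > 1$ and $d \mid d_i = \gcd(n_i,k)$ for every $i$) are given directly by the statement.

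First I would compute $\Rreal(M)$ by applying Lemma~\ref{lemma:rank_M_comp} to $\overline{M}$, which is the $(k; n_1,\ldots,n_m)$ circulant block diagonal matrix. Before doing so, I must check that $\overline{M}$ is not an all-one matrix: this would force $n_i = k$ for all $i$, but then $\gcd(n_i,k) = k = n_i$ would contradict the hypothesis that $n_i > d$ for some $i$. Hence Lemma~\ref{lemma:rank_M_comp} applies and yields
\[\Rreal(M) = \sum_{i=1}^{m} (n_i - \gcd(n_i,k) + 1) = \sum_{i=1}^{m}(n_i - d + 1) = n - m(d-1).\]

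Next I would carry out the elementary reduction of $\Rreal(M) > n/d$ to a statement about $n$ and $m$. Multiplying through by $d > 0$ and using $d > 1$, the inequality $n - m(d-1) > n/d$ is equivalent to $n(d-1)/d > m(d-1)$, which (dividing by $d-1 > 0$) becomes $n > md$. So the whole task boils down to proving $n > md$.

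Finally, I would establish $n > md$ from the divisibility and size constraints: since $d = \gcd(n_i,k)$ divides $n_i$ for every $i$, each $n_i$ is a positive multiple of $d$, so $n_i \geq d$; and for the index $i_0$ with $n_{i_0} > d$, we in fact have $n_{i_0} \geq 2d$. Summing, $n \geq (m-1)d + 2d = (m+1)d > md$. This confirms $\Rreal(M) > n/d$, and Theorem~\ref{thm:common_k_d} then gives $\Rbin(M) \geq \Rreal(M)+1$, as required. There is no real obstacle here; essentially the entire work is in Theorem~\ref{thm:common_k_d}, and this corollary is just the verification that a uniform value of $d_i$ automatically beats the rank threshold.
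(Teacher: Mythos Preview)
Your proof is correct and follows essentially the same route as the paper: compute $\Rreal(M)=n-m(d-1)$ via Lemma~\ref{lemma:rank_M_comp}, verify the inequality $\Rreal(M)>n/d$ by reducing it to $n>md$ (which holds since each $n_i\geq d$ with strict inequality for some $i$), and then invoke Theorem~\ref{thm:common_k_d}. Your write-up is in fact slightly more careful than the paper's, since you explicitly justify that $\overline{M}$ is not an all-one matrix before applying Lemma~\ref{lemma:rank_M_comp}.
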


\begin{proof}
Put $n = \sum_{i=1}^{m}{n_i}$, and apply Lemma~\ref{lemma:rank_M_comp} to obtain that the real rank of $M$ satisfies
\[\Rreal(M) = \sum_{i=1}^{m}{(n_i-d+1)} = n - m \cdot (d-1) > \tfrac{n}{d},\]
where the inequality holds because $d>1$ and because $n > m \cdot d$, which follows from the fact that $n_i \geq d$ for all $i \in [m]$ with a strict inequality for some $i \in [m]$.
This allows us to apply Theorem~\ref{thm:common_k_d} and to complete the proof.
\end{proof}

As an immediate corollary, we obtain that the binary rank of $D_{n,k}$ exceeds its real rank, whenever the latter is not $n$, and thus confirm Theorem~\ref{thm:D_n_k}.

\begin{proof}[ of Theorem~\ref{thm:D_n_k}]
Let $n>k>0$ be integers.
If $\gcd(n,k)=1$, then it follows from Lemma~\ref{lemma:realrank_D} that $\Rbin(D_{n,k}) = \Rreal(D_{n,k})=n$.
Otherwise, it holds that $n>k \geq \gcd(n,k)>1$, allowing us to apply Theorem~\ref{thm:common_k_d_same} with $m=1$ and to obtain that
$\Rbin(D_{n,k}) \geq \Rreal(D_{n,k}) + 1$, so we are done.
\end{proof}

We end this section with the following result that deals with the complement of $k$-regular $n \times n$ circulant block diagonal matrices for which $n-k$ is a prime.

\begin{theorem}\label{thm:prime}
Let $M$ be the complement of a $(k ; n_1, \ldots, n_m)$ circulant block diagonal matrix for integers satisfying $n_i \geq k >0$ for all $i \in [m]$, and put $n = \sum_{i=1}^{m}{n_i}$.
Suppose that $n-k$ is a prime number.
Then, $\Rbin(M) \geq \min( \Rreal(M) + 1,n)$.
\end{theorem}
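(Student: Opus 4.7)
The plan is to assume that $P$ is a partition of the ones of $M$ into combinatorial rectangles with $|P|\le \Rreal(M)$, and to show that $|P|\ge n$. The case $m=1$ follows directly from Theorem~\ref{thm:D_n_k}, so I would henceforth assume $m\ge 2$; combined with $n_i\ge k$ for all $i$, this yields $n\ge 2k$, and in particular $k\le n/2$. By Corollary~\ref{cor:balanced+1}, the assumption $|P|\le \Rreal(M)$ implies that every rectangle $R=A\times B$ of $P$ is balanced in rows and in columns in every block. Applying Lemma~\ref{lemma:divides+1} (in the form of Remark~\ref{remark:lemma_common_k}, Item~\ref{itm:remark_2}) to each rectangle of $P$, one obtains that $g(n-k)$ divides $|A|\cdot|B|$, where $g=\gcd(n_1,\ldots,n_m,k)$. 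Since $g\mid n-k$ and $n-k$ is prime, $g\in\{1,n-k\}$; the option $g=n-k$ would force $n-k\mid k$ and hence $k\ge n-k$, which together with $k\le n/2$ would force $n=2k$ and every $n_i=k$, a degenerate situation to be handled separately. In the main case $g=1$, primality of $n-k$ yields $(n-k)\mid|A|$ or $(n-k)\mid|B|$; combining with $|A|,|B|\le n-1$ (because $M$ has no all-one row or column) and $2(n-k)\ge n$, the only valid multiple of $n-k$ in $[1,n-1]$ is $n-k$ itself, so every rectangle of $P$ satisfies $|A|=n-k$ or $|B|=n-k$.

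The crux of the argument is to show that $|A|=n-k$ forces $|B|\le 1$, and symmetrically that $|B|=n-k$ forces $|A|\le 1$. Let $\ell_j=|A\cap(\{j\}\times[n_j])|$. The inequality $|A|-\ell_j\le n-n_j$ yields $\ell_j\ge n_j-k$ for every block $j$. Because $\overline{D_{n_j,n_j-k}}$ is a circulant whose rows form distinct cyclic arcs of length $n_j-k$ in $[n_j]$, the entries of $M$ common to all rows of $A$ lying in block $j$'s columns form the intersection of $\ell_j$ such arcs. A standard cyclic-interval argument bounds this intersection by $\max(0,\,n_j-k-\ell_j+1)\le 1$, with equality $1$ if and only if $\ell_j=n_j-k$. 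Since $\ell_j-(n_j-k)\le k$ for every $j$ and $\sum_j(\ell_j-(n_j-k))=(m-1)k$, at least $m-1$ blocks satisfy $\ell_j>n_j-k$, so at most one block contributes a common column, giving $|B|\le 1$.

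Consequently every rectangle of $P$ has shape $(n-k)\times 1$ or $1\times(n-k)$ and covers exactly $n-k$ ones of $M$; since $M$ contains $n(n-k)$ ones in total, it follows that $|P|=n$. The main obstacle I expect is the cyclic-arc intersection estimate, paired with the excess-counting identity $\sum_j(\ell_j-(n_j-k))=(m-1)k$, which together force all rectangles of $P$ to be very thin. A secondary delicate point is the handling of blocks with $n_j=k$: for such blocks $\overline{D_{n_j,0}}$ is the zero matrix and all the rows of $M$ within the block coincide, so the arc bound becomes vacuous and one must separately verify that these degenerate blocks cannot create rectangles that violate the count above.
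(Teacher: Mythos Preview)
Your overall strategy matches the paper's, but you take a needlessly circuitous route, and the ``degenerate'' case you defer is fatal rather than merely delicate.

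On the route: the paper's argument is shorter. Since $M$ is $(n-k)$-regular, every monochromatic rectangle $A\times B$ already satisfies $|A|\le n-k$ and $|B|\le n-k$; there is no need to treat $m=1$ separately, to reduce $g$ to $1$, or to invoke the weaker bound $|A|,|B|\le n-1$ together with $2(n-k)\ge n$. Once $|A|\cdot|B|$ is a multiple of the prime $n-k$, one of $|A|,|B|$ must equal $n-k$; if, say, $|B|=n-k$, then every row of $A$ has its entire support equal to $B$, and the assertion that $M$ has distinct rows forces $|A|=1$ in one line. Your arc-intersection bound and excess-counting identity are a roundabout reproof of this same ``distinct rows'' step.

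On the gap: when $k>1$ and some block has $n_j=k$, the $j$th diagonal block of $M$ is the zero matrix, the $k$ rows of $M$ in that block coincide, and neither your ``distinct arcs'' picture nor the paper's ``distinct rows'' assertion holds. This is not repairable, because the statement is actually false in that regime. Take $m=2$, $k=2$, $n_1=3$, $n_2=2$, so $n=5$ and $n-k=3$ is prime; here $\Rreal(M)=4$, yet the four rectangles
\[
\{1\}\times\{3,4,5\},\quad \{2\}\times\{1,4,5\},\quad \{3\}\times\{2,4,5\},\quad \{4,5\}\times\{1,2,3\}
\]
partition the ones of $M$, giving $\Rbin(M)=4<\min(\Rreal(M)+1,n)=5$. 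The last rectangle has $|B|=n-k$ and $|A|=2$, exhibiting precisely the failure of your intended implication $|B|=n-k\Rightarrow|A|\le1$. The ``separate verification'' you propose therefore cannot succeed; an additional hypothesis such as $n_i>k$ for all $i$ (or $k=1$) is genuinely needed.
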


\begin{proof}
Let $P$ be a partition of the ones in $M$ into combinatorial rectangles.
If there exists a rectangle $R = A \times B$ in $P$ such that $|A| \cdot |B|$ is not an integer multiple of $n-k$, then by Lemma~\ref{lemma:divides+1} (see Remark~\ref{remark:lemma_common_k}, Item~\ref{itm:remark_2}), it holds that $|P| \geq \Rreal(M) + 1$. Otherwise, all the rectangles $R = A \times B$ of $P$ satisfy that $|A| \cdot |B|$ is an integer multiple of $n-k$. Since $n-k$ is a prime, using the fact that $M$ is an $(n-k)$-regular matrix with distinct rows, it follows that every such rectangle satisfies either $|A|=n-k,~|B|=1$ or $|A|=1,~|B|=n-k$, and thus $|A| \cdot |B| = n-k$. Since the total number of ones in $M$ is $n \cdot (n-k)$, it follows that $|P| \geq n$, so we are done.
\end{proof}

\section{Matrices with Regularity $2$}\label{sec:2-reg}

In this section, we further explore $2$-regular matrices.
The following two theorems determine the exact binary rank of families of matrices whose complement is $2$-regular.

\begin{theorem}\label{thm:2_even}
For even integers $n_1, \ldots, n_m \geq 2$, let $M$ be the complement of a $(2 ; n_1, \ldots, n_m)$ circulant block diagonal matrix. Put $n = \sum_{i=1}^{m}{n_i}$, and suppose that $n_i >2$ for some $i \in [m]$.
Then, \[\Rbin(M) = n-m+1.\]
\end{theorem}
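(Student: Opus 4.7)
The plan is to sandwich $\Rbin(M)$ between the matching upper and lower bounds by directly invoking the machinery already developed in the paper, once the real rank of $M$ has been pinned down.

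First I would compute $\Rreal(M)$. Since every $n_i$ is even, the count $m_{\mathrm{even}}$ of even $n_i$'s equals $m$. The hypothesis that some $n_i > 2$ rules out the degenerate $2 \times 2$ all-one case, so Lemma~\ref{lemma:2-reg-rankR} applies to the underlying $(2;n_1,\ldots,n_m)$ circulant block diagonal matrix $\overline{M}$ and gives
\[
\Rreal(M) \;=\; \Rreal(\overline{M}) \;=\; n - m_{\mathrm{even}} \;=\; n - m.
\]

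For the upper bound, I would apply Theorem~\ref{thm:upper_new_ki} with $k_i = 2$ for all $i \in [m]$. For each $i$ with $n_i > 2$ we have $d_i = \gcd(n_i,2) = 2$ (since $n_i$ is even), and for each $i$ with $n_i = 2$ the convention gives $d_i = 1$. Because some $n_i > 2$, we get $\max_{i \in [m]} d_i = 2$, and therefore
\[
\Rbin(M) \;\leq\; \Rreal(M) + \max_{i \in [m]} d_i - 1 \;=\; (n-m) + 2 - 1 \;=\; n-m+1.
\]

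For the matching lower bound I would invoke Theorem~\ref{thm:common_k_d_same} with $k=2$ and $d=2$. The hypotheses of that theorem are satisfied: $d = 2 > 1$, and since each $n_i$ is even we have $\gcd(n_i,2) = 2 = d$ for all $i \in [m]$, and by assumption some $n_i > 2 = d$. The conclusion gives
\[
\Rbin(M) \;\geq\; \Rreal(M) + 1 \;=\; n-m+1.
\]
Combining the two bounds yields $\Rbin(M) = n-m+1$.

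Since the two inputs (Theorem~\ref{thm:upper_new_ki} and Theorem~\ref{thm:common_k_d_same}) are already in place, there is no real obstacle here; the only thing to double-check is that the hypothesis ``$n_i > 2$ for some $i$'' is used in both directions, namely to guarantee $\max d_i = 2$ in the upper bound and to supply the index with $n_i > d$ needed by Theorem~\ref{thm:common_k_d_same}, while simultaneously ruling out the excluded case in Lemma~\ref{lemma:2-reg-rankR}.
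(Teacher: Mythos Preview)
Your proof is correct and follows essentially the same route as the paper: compute $\Rreal(M)=n-m$ via Lemma~\ref{lemma:2-reg-rankR}, then sandwich $\Rbin(M)$ using the paper's upper and lower bound machinery. The only cosmetic difference is in which results you cite: for the upper bound the paper invokes Theorem~\ref{thm:2-regIntro} (which is itself a packaging of Theorem~\ref{thm:upper_new_ki}), and for the lower bound the paper uses Theorem~\ref{thm:ki_divides_ni+1} rather than Theorem~\ref{thm:common_k_d_same}, but both lower-bound theorems apply here and yield the same conclusion.
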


\begin{proof}
For every $i \in [m]$, it holds that $\gcd(n_i,2)=2$. By Lemma~\ref{lemma:2-reg-rankR}, the real rank of $M$ satisfies $\Rreal(M) = n-m$.
By Theorem~\ref{thm:2-regIntro}, it holds that $\Rbin(M) \in \{n-m,n-m+1\}$, and by Theorem~\ref{thm:ki_divides_ni+1}, we obtain that $\Rbin(M) \geq \Rreal(M)+1 = n-m+1$, so we are done.
\end{proof}

\begin{theorem}\label{thm:2_even_odd}
For integers $n_1, \ldots, n_m \geq 2$, let $M$ be the complement of a $(2 ; n_1, \ldots, n_m)$ circulant block diagonal matrix.
Let $m_{even}$ denote the number of indices $i \in [m]$ for which $n_i$ is even.
Let $n_{even}$ denote the sum of the $n_i$'s that are even, and let $n_{odd}$ denote the sum of the $n_i$'s that are odd. Put $n = n_{even} + n_{odd}$.
Suppose that $n$ is odd and that
\[n > 2 \cdot m_{even} + n_{odd} \cdot (n_{odd}-2).\]
Then, $\Rbin(M) = n-m_{even}+1$.
\end{theorem}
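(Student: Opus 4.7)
The plan is to prove the two bounds separately. For the upper bound, I would first observe that the hypothesis forces $m_{even} \geq 1$, because otherwise every $n_i$ is odd with $n_i \geq 3$, giving $n = n_{odd} \geq 3$ and the impossible inequality $n > n(n-2)$. Hence $\max_i \gcd(n_i,2) = 2$, and Theorem~\ref{thm:upper_new_ki} combined with Lemma~\ref{lemma:2-reg-rankR} yields $\Rbin(M) \leq \Rreal(M) + 1 = n - m_{even} + 1$. For the lower bound, Theorem~\ref{thm:2-regIntro} applied to the $2$-regular matrix $\overline{M}$ already gives $\Rbin(M) \in \{n - m_{even},\, n - m_{even}+1\}$, so it remains to rule out the lower value.

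Suppose for contradiction that $P$ is a partition of the ones of $M$ into combinatorial rectangles with $|P| = n - m_{even} = \Rreal(M)$. Corollary~\ref{cor:balanced+1} then forces every rectangle in $P$ to be balanced in rows and in columns in every block. Applying Lemma~\ref{lemma:divides+1} via Item~\ref{itm:remark_2} of Remark~\ref{remark:lemma_common_k}, and using that $n_{odd}>0$ makes $\{\sum_i e_i d_i/k_i : e_i \in \Z\}$ equal to $\tfrac{1}{2}\Z$, forces $|A|\cdot|B|$ to be a positive integer multiple of $n - 2$ for every rectangle $R = A \times B$ in $P$. Writing $c_R = |A|\cdot|B|/(n-2) \in \Z_{\geq 1}$, summation gives $\sum_R c_R = n$, hence at most $m_{even}$ rectangles are \emph{big} (have $c_R \geq 2$) and at least $n - 2m_{even}$ are \emph{unit} (have $c_R = 1$).

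Next I would count ones in the \emph{odd-odd region}, the entries whose row and column both lie in odd blocks, which a direct calculation shows equals $n_{odd}(n_{odd}-2)$. Since $n-2$ is odd, each unit rectangle has $|A|,|B|$ odd, and I would split them into Case~1 ($|A|=1$), Case~2 ($|B|=1$), and Case~3 ($|A|,|B| \geq 3$). Balance forces the single row or column of a unit rectangle to lie in an odd block (otherwise some even block has $|A_{R,i}| = 1$, violating balance), so a Case~1 or Case~2 rectangle contributes exactly $n_{odd}-2$ ones to the odd-odd region. For a Case~3 rectangle, balance forces $|A_{R,i}|$ and $|B_{R,i}|$ to be even in each even block $i$, so the odd blocks carry a positive (hence $\geq 1$) number of rows and of columns, giving $|A_{R,\text{odd}}|\cdot|B_{R,\text{odd}}| \geq 1$ ones in the odd-odd region. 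Letting $a,b,c$ count the three types of unit rectangles, the partition property yields $(a+b)(n_{odd}-2) + c \leq n_{odd}(n_{odd}-2)$. Using $n_{odd} \geq 3$ (which follows from $n$ odd with each odd $n_i \geq 3$), this rearranges to $a+b+c \leq n_{odd}(n_{odd}-2)$, which combined with $a+b+c \geq n - 2m_{even}$ contradicts the hypothesis.

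The main obstacle is the classification of the unit rectangles and the parity argument that places every Case~3 rectangle into the odd-odd region; once these are in hand the counting is essentially forced. The hypothesis $n > 2m_{even} + n_{odd}(n_{odd}-2)$ is tuned exactly to this accounting, with $m_{even}$ bounding the big rectangles and $n_{odd}(n_{odd}-2)$ bounding the odd-odd footprint of the unit rectangles.
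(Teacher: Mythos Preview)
Your proposal is correct and follows the same route as the paper: assume $|P| = n - m_{even}$, invoke Corollary~\ref{cor:balanced+1} and Lemma~\ref{lemma:divides+1} to force every rectangle to be balanced with $(n-2)\mid |A|\cdot|B|$, deduce that at least $n-2m_{even}$ rectangles cover exactly $n-2$ ones, and then use parity (odd $|A|,|B|$ plus evenness of the even-block parts) to place at least one cell of each such rectangle into the odd--odd region of size $n_{odd}(n_{odd}-2)$. Your split into Cases~1, 2, 3 is harmless but unnecessary, since in the end you only use the common lower bound of $1$ for the odd--odd contribution; the paper simply applies the parity argument uniformly to every unit rectangle without distinguishing $|A|=1$ or $|B|=1$.
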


\begin{proof}
By Theorem~\ref{thm:2-regIntro}, it holds that $\Rbin(M) \leq n-m_{even}+1$.
To prove a matching lower bound, let $P$ be a partition of the ones in $M$ into combinatorial rectangles.
Using Lemma~\ref{lemma:2-reg-rankR}, we clearly have
\[|P| \geq \Rreal(M) = n-m_{even}.\]
If the partition $P$ includes a rectangle that is not balanced in rows or in columns in some block, then Corollary~\ref{cor:balanced+1} implies that $|P| \geq n-m_{even}+1$, and we are done.
If the partition $P$ includes a rectangle $R = A \times B$ such that $|A| \cdot |B|$ is not an integer multiple of $n-2$, then Lemma~\ref{lemma:divides+1} (see Remark~\ref{remark:lemma_common_k}, Item~\ref{itm:remark_2}) implies that $|P| \geq n-m_{even}+1$. Otherwise, all the rectangles of $P$ are balanced in rows and in columns in all blocks, and the number of ones that each of them covers is divisible by $n-2$.

Assume for the sake of contradiction that $|P| = n-m_{even}$. Since the total number of ones in $M$ is $n \cdot (n-2)$, it follows that at least $n-2 \cdot m_{even}$ of the rectangles of $P$ cover precisely $n-2$ ones. Indeed, otherwise the total number of ones covered by the rectangles of $P$ would be larger than
\[(n-2 \cdot m_{even}) \cdot (n-2) + m_{even} \cdot 2 (n-2) = n \cdot (n-2).\]
Since $n-2$ is odd, it follows that each of those $n-2 \cdot m_{even}$ rectangles has an odd number of rows and an odd number of columns. Since they are balanced in rows and in columns in the blocks of even size, it follows that the total number of rows of every such rectangle in the blocks of odd size is odd, and similarly,  the total number of their columns in these blocks is odd. In particular, each of these $n-2 \cdot m_{even}$ rectangles covers at least one entry whose both row and column belong to a block of odd size. Since there are $n_{odd} \cdot (n_{odd}-2)$ ones in such entries, it follows that $n-2 \cdot m_{even} \leq n_{odd} \cdot (n_{odd}-2)$, a contradiction.
\end{proof}

We finally turn to prove Theorem~\ref{thm:2-regular} and to determine for every integer $r$, the smallest possible binary rank of the complement of a $2$-regular matrix with binary rank $r$. The proof uses Theorems~\ref{thm:2_even} and~\ref{thm:2_even_odd}.

\begin{proof}[ of Theorem~\ref{thm:2-regular}]
For an integer $r \geq 4$, let $M$ be a $2$-regular $0,1$ matrix with $\Rbin(M) = r$.
Since applying permutations to the rows and columns of a matrix does not change its binary rank, it can be assumed, by Lemma~\ref{lemma:2-reg-structure}, that $M$ is a $(2; n_1,\ldots,n_m)$ circulant block diagonal matrix for integers $m \geq 1$ and $n_1, \ldots, n_m \geq 2$. Note that $M$ is an $n \times n$ matrix for $n = \sum_{i=1}^{n}{n_i}$.

Let $m_{even}$ denote the number of indices $i \in [m]$ for which $n_i$ is even.
Since $M$ is not the all-one $2 \times 2$ matrix, Lemma~\ref{lemma:2-reg-rankR} implies that
\begin{eqnarray}\label{eq:real_rank_M}
\Rreal(M) = \Rreal(\overline{M}) = n-m_{even}.
\end{eqnarray}
Let $m_2$ denote the number of indices $i \in [m]$ for which $n_i=2$, and let $m_{l}$ denote the number of indices $i \in [m]$ for which $n_i$ is even and larger than $2$, that is, $m_{l} = m_{even}-m_2$.
Recall, using Lemma~\ref{lemma:isolation}, that the binary rank of $D_{n_i,n_i-2}$ is equal to $n_i$ whenever $n_i > 2$ and to $1$ whenever $n_i=2$.
This implies that the binary rank of $M$ satisfies $\Rbin(M) = r = n - m_2$.

Consider first the case where $n_i$ is even for all $i \in [m]$.
If $n_i=2$ for all $i \in [m]$, then it follows from~\eqref{eq:real_rank_M} that $\Rbin(\overline{M}) \geq \Rreal(\overline{M}) = r$, so the required bound is clearly satisfied in this case for all $r \geq 4$.
Otherwise, observe that $n \geq 2 \cdot m_2 + 4 \cdot m_{l}$, and that this implies that
\[ r = n-m_2 \geq m_2 + 4 \cdot m_{l} \geq 4 \cdot m_{l}.\]
Since $n_i >2$ for some $i \in [m]$, we can apply Theorem~\ref{thm:2_even} to obtain that
\[\Rbin(\overline{M}) =n-m_{even}+1 = n-m_2-m_l+1 = r-m_l+1 \geq \frac{3r}{4}+1.\]
Since the binary rank of a matrix is an integer, the desired bound follows.

It remains to consider the case where there exists an $i \in [m]$ for which $n_i$ is odd and thus satisfies $n_i \geq 3$.
Using~\eqref{eq:real_rank_M}, we obtain that
\[\Rbin(\overline{M}) \geq \Rreal(\overline{M}) = n-m_{even} = n-m_2-m_l = r-m_l.\]
Observe that $n \geq 2 \cdot m_2 + 4 \cdot m_{l}+3$, which implies that $r = n-m_2 \geq m_2 + 4 \cdot m_{l}+3$.
If this inequality is strict, that is, $r \geq m_2 + 4 \cdot m_{l}+4$, then
\[ \Rbin(\overline{M}) \geq r-m_l \geq r - \frac{r-m_2-4}{4} \geq \frac{3r}{4}+1,\]
and the desired bound follows.
Otherwise, it holds that $r= m_2 + 4 \cdot m_{l}+3$.
This implies, using Lemma~\ref{lemma:isolation}, that all the blocks of $M$ are of even size besides one of them whose size is $3$, and in particular it follows that $n$ is odd.
By Theorem~\ref{thm:2_even_odd}, applied with $m_{even} \geq 1$ and $n_{odd}=3$, it follows that if $n>5$ then it holds that
\[ \Rbin(\overline{M}) = n-m_{even}+1= r -m_l +1 = r- \frac{r-m_2-3}{4}+1 \geq \frac{3r}{4}+1,\]
as required. It remains to check the case of $n = 5$, where $M$ has two diagonal blocks that are equal to $D_{3,1}$ and $D_{2,0}$.
In this case we have $\Rbin(M) = \Rreal(M) = 4$, and thus $\Rbin(\overline{M}) \geq 4$, satisfying the claimed bound, and we are done.
\end{proof}

We conclude with the statement of the following corollary of Theorems~\ref{thm:gap_kIntro} and~\ref{thm:2-regular}.
\begin{corollary}\label{cor:2-regular}
For every integer $r \geq 4$, the smallest possible binary rank of the complement of a $2$-regular $0,1$ matrix with binary rank $r$ is
$\lceil\tfrac{3r}{4} \rceil+1$.
\end{corollary}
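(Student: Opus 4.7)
The plan is to deduce the corollary by combining a matching upper and lower bound, both of which are already available in the paper. The upper bound will come from the explicit construction of Theorem~\ref{thm:gap_kIntro}, and the matching lower bound will come from Theorem~\ref{thm:2-regular}. No new machinery is required; the only work is to verify that the two bounds coincide as a function of $r$.

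For the upper bound, I would instantiate Theorem~\ref{thm:gap_kIntro} with $k = 2$. Since $r \geq 4 = 2k$, the theorem applies and produces a $2$-regular $0,1$ matrix $M$ with $\Rbin(M) = r$ and
\[
\Rbin(\overline{M}) \;\leq\; \Big\lceil \tfrac{k+1}{2k}\cdot r\Big\rceil + (2k-3) \;=\; \Big\lceil \tfrac{3r}{4}\Big\rceil + 1,
\]
where the equality uses $k = 2$. Hence the smallest possible binary rank of the complement of a $2$-regular matrix with binary rank $r$ is at most $\lceil 3r/4\rceil + 1$.

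For the matching lower bound, I would invoke Theorem~\ref{thm:2-regular}: any $2$-regular $0,1$ matrix $M$ with $\Rbin(M) = r \geq 4$ satisfies $\Rbin(\overline{M}) \geq \lceil 3r/4\rceil + 1$. Combining this with the previous paragraph gives the claimed equality.

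The step that requires the most care is merely the arithmetic substitution $k=2$ in the formula of Theorem~\ref{thm:gap_kIntro}, together with checking the range condition $r \geq 2k$, which is exactly the hypothesis $r \geq 4$ of the corollary. There is no substantive mathematical obstacle, as the corollary is essentially a packaging of two previously established results; the harder content was already performed in the proofs of Theorems~\ref{thm:gap_kIntro} and~\ref{thm:2-regular}.
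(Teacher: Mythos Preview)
Your proposal is correct and matches the paper's own approach: the corollary is stated there as an immediate consequence of Theorem~\ref{thm:gap_kIntro} (specialized to $k=2$) for the upper bound and Theorem~\ref{thm:2-regular} for the lower bound, with no further argument given. The arithmetic check and the range condition $r\geq 2k=4$ are exactly as you describe.
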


\section*{Acknowledgement}

We are grateful to the anonymous referee for valuable suggestions and comments.

\bibliographystyle{abbrv}
\bibliography{circulant_block}

\begin{thebibliography}{10}

\bibitem{BBGJK21}
K.~Balodis, S.~Ben{-}David, M.~G{\"{o}}{\"{o}}s, S.~Jain, and R.~Kothari.
\newblock Unambiguous {DNF}s and {A}lon-{S}aks-{S}eymour.
\newblock In {\em Proc. of the {IEEE} 62nd Annual Symposium on Foundations of
  Computer Science ({FOCS}'21)}, pages 116--124. IEEE, 2021.

\bibitem{Breuer69}
M.~A. Breuer.
\newblock Combinatorial equivalence of $(0, 1)$ circulant matrices.
\newblock {\em J. Comput. Syst. Sci.}, 3(1):8--23, 1969.

\bibitem{BrualdiMR86}
R.~A. Brualdi, R.~Manber, and J.~A. Ross.
\newblock On the minimum rank of regular classes of matrices of zeros and ones.
\newblock {\em J. Combin. Theory Ser. A}, 41(1):32--49, 1986.

\bibitem{GradyN97}
M.~Grady and M.~Newman.
\newblock The geometry of an interchange: {M}inimal matrices and circulants.
\newblock {\em Linear Algebra and its Applications}, 262:11--25, 1997.

\bibitem{GrahamP71}
R.~L. Graham and H.~O. Pollak.
\newblock On the addressing problem for loop switching.
\newblock {\em Bell Syst. Tech. J.}, 50(8):2495--2519, 1971.

\bibitem{Gregory89}
D.~A. Gregory.
\newblock Biclique partitions of the complement of a directed cycle.
\newblock {\em J. Comb. Math. \& Comb. Comp.}, 6:183--187, 1989.

\bibitem{GregoryPullman}
D.~A. Gregory and N.~J. Pullman.
\newblock Semiring rank: {B}oolean rank and nonnegative rank factorization.
\newblock {\em Journal of Combinatorics, Information and System Sciences},
  8(3):223--233, 1983.

\bibitem{HavivP22}
I.~Haviv and M.~Parnas.
\newblock On the binary and {B}oolean rank of regular matrices.
\newblock In {\em Proc. of the 47th International Symposium on Mathematical
  Foundations of Computer Science ({MFCS}'22)}, pages 56:1--56:14, 2022.

\bibitem{HefnerHLM90}
K.~A.~S. Hefner, T.~D. Henson, J.~R. Lundgren, and J.~S. Maybee.
\newblock Biclique coverings of bigraphs and digraphs and minimum semiring
  ranks of $\{0,1\}$-matrices.
\newblock {\em Congr. Numer.}, 71:115--122, 1990.

\bibitem{ingleton1956rank}
A.~W. Ingleton.
\newblock The rank of circulant matrices.
\newblock {\em J. London Math. Soc.}, s1--31(4):445--460, 1956.

\bibitem{JiangR93}
T.~Jiang and B.~Ravikumar.
\newblock Minimal {NFA} problems are hard.
\newblock {\em {SIAM} J. Comput.}, 22(6):1117--1141, 1993.
\newblock Preliminary version in ICALP'91.

\bibitem{KN97}
E.~Kushilevitz and N.~Nisan.
\newblock {\em Communication Complexity}.
\newblock Cambridge University Press, 1997.

\bibitem{MonsonPR95survey}
S.~D. Monson, N.~J. Pullman, and R.~Rees.
\newblock A survey of clique and biclique coverings and factorizations of
  $(0,1)$-matrices.
\newblock {\em Bull. Inst. Combin. Appl.}, 14:17--86, 1995.

\bibitem{Pullman88}
N.~J. Pullman.
\newblock Ranks of binary matrices with constant line sums.
\newblock {\em Linear Algebra and its Applications}, 104:193--197, 1988.

\bibitem{Schwartz20}
S.~Schwartz.
\newblock An overview of graph covering and partitioning.
\newblock Technical Report 20-24, ZIB, Takustr. 7, 14195 Berlin, 2020.

\end{thebibliography}

\end{document}